\pgfplotsset{every axis/.append style={
          axis x line=middle,  
          axis y line=middle,  
          axis line style={-,color=blue}, 
          xlabel={$x$},     
          ylabel={$y$},     
      }}
\pgfplotsset{compat=1.13}
\DeclareMathOperator{\Hom}{Hom}
\DeclareMathOperator{\ord}{ord}
\DeclareMathOperator{\Supp}{Supp}
\def\rel{reduced }
\def\RHS{$\mathbb QH S^3$ }
\def\cS{{\mathcal S}}
\def\V{{V}}
\def\I{{I}}
\def\J{{J}}
\def\K{{K}}
\def\ZZ{\mathbb{Z}}
\def\NN{\mathbb{N}}
\def\CC{\mathbb{C}}
\def\Q{\mathbb{Q}}
\def\QQ{\mathbb{Q}}
\def\RR{\mathbb{R}}
\def\cE{\mathcal{E}}
\def\cO{\mathcal{O}}
\def\cF{\mathcal{F}}
\def\cK{\mathcal{K}}
\def\R{\Delta}
\def\div{\textrm{div}}
\def\pc{\textrm{pc}}
\def\mpc{\textrm{mpc}}
\def\tt{\mathbf{t}}
\def\ZK{Z_K}
\def\qp#1{\widetilde{#1}}
\def\eqmap#1{\smash{\mathop{=}\limits^{#1}}}
\newtheorem{thm}{Theorem}[section] 
\newtheorem{prop}{Proposition}[section]
\newtheorem{question}{Question}
\newtheorem{cor}{Corollary}[section]
\newtheorem{lemma}{Lemma}[section]
\newtheorem{def-lemma}{Definition-Lemma}[section]
\theoremstyle{remark}
\newtheorem{rem}{Remark}[section]
\theoremstyle{definition}
\newtheorem{dfn}{Definition}[section]
\newtheorem{exam}{Example}[section]
\let\c@lemma\c@thm
\let\c@prop\c@thm
\let\c@propdef\c@thm
\let\c@proper\c@thm
\let\c@problem\c@thm
\let\c@conj\c@thm
\let\c@cor\c@thm
\let\c@rem\c@thm
\let\c@dfn\c@thm
\let\c@notation\c@thm
\let\c@exam\c@thm
\title
[The delta invariant... II:
Poincar\'e series and topological aspects]
{The delta invariant of curves on rational surfaces II:
Poincar\'e series and topological aspects}
\author[J.I. Cogolludo]{Jos{\'e} Ignacio Cogolludo-Agust{\'i}n}
\address{Departamento de Matem\'aticas, IUMA\\
Universidad de Zaragoza\\
C.~Pedro Cerbuna 12\\
50009 Zaragoza, Spain}
\email{jicogo@unizar.es}
\author[T. L\'aszl\'o]{Tam\'as L\'aszl\'o}
\address{Old: Alfr\'ed R\'enyi Institute of Mathematics,
Hungarian Academy of Sciences,
Re\'altanoda utca 13-15, H-1053, Budapest, Hungary \newline \hspace*{4mm}
New: Babe\c{s}-Bolyai University, Str. Mihail Kog\u{a}lniceanu nr. 1, 400084 Cluj-Napoca, Romania}
\email{laszlo.tamas@renyi.mta.hu}
\author[J. Mart\'in]{Jorge Mart\'in-Morales}
\address{Centro Universitario de la Defensa-IUMA\\
Academia General Militar\\
Ctra.~de Huesca s/n.\\
50090, Zaragoza, Spain}
\email{jorge@unizar.es}
\author[A. N\'emethi]{Andr\'as N\'emethi}
\address{Alfr\'ed R\'enyi Institute of Mathematics,
Hungarian Academy of Sciences,
Re\'altanoda utca 13-15, H-1053, Budapest, Hungary \newline
 \hspace*{4mm} ELTE - University of Budapest, Dept. of Geometry, Budapest, Hungary \newline \hspace*{4mm}
BCAM - Basque Center for Applied Math.,
Mazarredo, 14 E48009 Bilbao, Basque Country, Spain}
\email{nemethi.andras@renyi.mta.hu }
\subjclass[2010]{Primary. 14B05, 32Sxx; Secondary. 14E15}
\keywords{Normal surface singularities, delta invariant of curves, Poincar\'e series, periodic constant, 
twisted duality, rational surface singularities, Weil divisors, Riemann-Roch}
\thanks{The first and third authors are partially supported by MTM2016-76868-C2-2-P and
Gobierno de Arag{\'o}n (Grupo de referencia ``{\'A}lgebra y Geometr{\'i}a'')
cofunded by Feder 2014-2020 ``Construyendo Europa desde Arag\'on''.
The third author is also partially supported by FQM-333 ``Junta de Andaluc{\'\i}a''. \\
The second and fourth authors are supported by NKFIH Grant ``\'Elvonal (Frontier)'' KKP 126683. 
The second author was also supported by ERCEA Consolidator Grant 615655 - NMST, 
and partially by the Basque Government through the BERC 2018-2021 program and Gobierno Vasco Grant IT1094-16, 
by the Spanish Ministry of Science, Innovation and Universities: BCAM Severo Ochoa accreditation SEV-2017-0718.}
\begin{document}

\begin{abstract}
In this article we study abstract and embedded invariants of reduced curve germs
via topological techniques. One of the most important numerical analytic invariants of an abstract curve is 
its delta invariant. Our primary goal is to develop delta invariant formulae for curves embedded in rational 
singularities in terms of embedded data. The topological machinery not only produces formulae, but it also 
creates deep connections with the theory of (analytical and topological) multivariable Poincar\'e series.
\end{abstract}

\maketitle

\tableofcontents

\section{Introduction}
In this article we study crucial abstract and embedded invariants of reduced curve germs
via topological techniques. One of the most important numerical analytic invariants of an abstract
curve is its delta invariant. Our primary goal is to develop delta invariant formulae for curves
embedded in rational singularities in terms of embedded data. Some of the formulae, which will be
considered, were already found in \cite{kappa} using algebro-geometric methods. Nevertheless, in the present note
we run a completely different (mostly topological) machinery, which produces some additional formulae and
also creates deep connections with the theory of (analytical and topological) multivariable Poincar\'e series.

The motivation for this work comes from the common territory between the local and global study of a (reduced)
curve on a projective normal surface. In order to discuss this aspect both from local and global points of view,
in the sequel we make the following preparations.

\subsection{}\label{ss:intro1}
Let $(X,0)$ be a normal surface singularity and $(C,0)\subset (X,0)$ a reduced curve germ on it. Regarding the
normal surface singularity, for simplicity we will assume that its link $\Sigma$ is a rational homology sphere,
denoted by \RHS\ (e.g. rational singularities satisfy this restriction).

Then, we fix a good embedded resolution $\pi:\tilde X\to X$ of the pair $C\subset X$ and
consider the usual combinatorial package of the resolution (for details see section~\ref{sec:lattice}):
$E=\pi^{-1}(0)$ is the exceptional curve with its decomposition $\cup_vE_v$ into irreducible components,
$L=H_2(\tilde X, \ZZ)=\ZZ\langle E_v\rangle_v$ is the lattice of $\pi$ endowed with the negative definite
intersection form $(E_v,E_w)_{v,w}$. We identify the dual lattice $L'$ with those
rational cycles $\ell'\in L\otimes \QQ$ for which $(\ell',\ell)\in \ZZ$ for any $\ell\in L$.
Then $L'/L$ is the finite group $H_1(\partial \tilde X,\ZZ)$ ($\partial \tilde X=\Sigma$), which will be denoted by $H$.
We set $[\ell']$ for the class of $\ell'\in L'$ in $H$.

Let $K_\pi\in L'$ be the {\it canonical cycle} of $\pi$ (cf.~\eqref{eq:KX}), and we consider for any $\ell'\in L'$ the
Riemann--Roch expression $\chi(\ell'):=-(\ell', \ell'+K_\pi)/2$ (note that if $\ell\in L$ is effective and
non-zero then $\chi(\ell)=h^0(\cO_\ell)-h^1(\cO_\ell)$). Throughout this article, for a fixed good resolution $\pi$ the
notation $Z_K:=-K_{\pi}$ will be used.

Let $\cS'$ be the Lipman (anti-nef) cone $\{\ell'\in L'\,:\, (\ell', E_v)\leq 0\ \mbox{for all $v$}\}$.
By the negative definiteness of $(-,-)$ we know that $\cS'$ sits in the first quadrant of $L\otimes {\mathbb R}$.
One shows that for any $h\in H$ there exists a unique $s_h\in \cS'$ such that $[s_h]=h$ and $s_h$ is minimal with these
two properties. The cycle $s_h$ is zero only if $h=0$, and it usually is rather arithmetical and hard to find explicitly,
cf.~\cite{NemOSZ}.

Regarding the embedded curve $C$, let $\pi^*(C)$ be its {\it total transform}. It decomposes into $\ell_C'+\widetilde{C}$,
where $\widetilde{C}\subset \tilde X$ is the strict transform and $\ell'_C\in L'$ is a rational cycle determined uniquely
by the property that $\ell_C'+\widetilde{C}$ is a numerically trivial divisor.
The embedded topological type of $C$ is basically determined by the information on how many components of $\widetilde{C}$
intersect each $E_v$.

Then, as a first step we consider two numerical invariants: \textit{the delta invariant $\delta(C)$ of the abstract curve
germ $(C,0)$ and the embedded topological invariant $\chi(-\ell'_C)$ of the pair $C\subset X$.}

\subsection{Motivation (I)}
From a global analytic/algebraic point of view, there are several articles studying the generalized Riemann--Roch theorem for
Weil divisors on projective normal surfaces. In this context, the \textit{usual} formula valid for smooth surfaces is modified by
a correction term associated with the local singular points. E.g., Blache in~\cite{Blache-RiemannRoch} provides the following
formula, valid for any projective normal surface $Y$ and any Weil divisor $D$ of $Y$
$$\chi(\cO_Y(D))=\chi(\cO_Y)+ \frac{1}{2} ( D, D-K_Y)-\sum_{y\in {\rm Sing}(Y)} A_{Y,y}(-D),$$
where the local correction contribution associated with the singular points is encoded by a map
$A_{Y,y}:{\rm Weil}(Y,y)/{\rm Cartier}(Y,y) \to \QQ$, which additionally satisfies
$$A_{Y,y}(D)= \chi(-\ell_D')-\delta(D)$$ whenever $(D,y)\subset (Y,y)$ is a reduced curve germ.
In particular, if $(D,y)$ is Cartier then $\delta(D)=\chi(-\ell_D')$.
This expression identifies an abstract analytic invariant with an embedded topological invariant.
(Different aspects of this identity are discussed in~\cite[\S3.4]{kappa}.)

Then, one of our guiding question is the following:
\vspace{2mm}

\begin{question}\label{question:1}
In general, for a given $(X,0)$, can $A_{X,0}$, or equivalently $\delta(C)$, be read from the
local embedded topological type of the pair $(C,0)\subset (X,0)$?
\end{question}

\subsection{Motivation (II)}
Regarding the curve and normal surface germs, in a series of articles
--\,\cite{CDG-Poincare,CDGZ-PScurves,CDGZ-equivariantPS,CDGZ-HilbertFunction,CDGZ-equivariantPSZeta,CDGZ-PSfiltrations,
CDGZ-equivariantPSTopology,CDGZ-topologicaltype}\,--
A.~Campillo, F.~Delgado, and S.~Gusein-Zade introduced the \textit{multivariable analytical Poincar\'e series}
associated with different filtrations of the corresponding local ring of functions. In \cite{cdg} they proved
that for a plane curve singularity germ the Poincar\'e series is equivalent with the Alexander polynomial of its link.
Moreover, for a normal surface singularity $(X,0)$ the analytical Poincar\'e series --- as well as its topological analogue,
the topological Poincar\'e series\,--- codify a vast amount of information about $(X,0)$. For example, in the case of
$(X,0)$ with \RHS link, by a regularization procedure (measuring the asymptotic behavior of the coefficients of the series)
the geometric genus of $(X,0)$ can be interpreted as the \textit{periodic constant} of the analytical series. Similarly, from
a topological point of view, the Seiberg--Witten invariant of the link of $(X,0)$ can be determined as the periodic constant
of the topological series (see section~\ref{sec:pg}).

On the one hand, these results highlight the importance of the Poincar\'e series. On the other hand, they immediately pose the
following questions in our context as well:
\vspace{2mm}

\begin{question}\label{question:2}
Can the delta invariant of an (abstract) curve germ $(C,0)$ be obtained from its Poincar\'e series?
\end{question}

\begin{question}\label{question:3}
If we consider $(C,0)\subset (X,0)$, can the delta invariant be decoded from the Poincar\'e series of $(X,0)$?
Or, more generally, what kind of information of the embedded curve can be derived from the Poincar\'e series of the surface?
\end{question}

In this note we will produce numerical invariants from the series via their periodic constants (this is a
\textit{regularization} procedure, see \ref{ss:pc}).

\subsection{Main Results}
The main results of the present article can be divided into three parts. At a first glance, the first two parts are independent
from each other. The first part provides the $\delta$-invariant of an abstract curve as the periodic constant of an
analytic Poincar\'e series (associated with the abstract curve). The second one determines the periodic constant of the
topological Poincar\'e series associated with an embedded situation in terms of a finite sum. However, the third part
connects them and it creates the topological machinery, which produces concrete $\delta$-invariant formulae (in terms of the
embedded topological type) whenever the ambient space $(X,0)$ is rational.
In the sequel, we will introduce some notation in order to state the results of the corresponding parts.
\vspace{4mm}

{\bf Part I}. \ The first result of the present paper answers Question~\ref{question:2}.
Let $(C,0)$ be an (abstract) reduced curve germ and write
$C=\cup_{i\in I}C_i$ as the union of its irreducible components.
Denote by $P_{C_{J}}(\tt_J)$ the Poincar\'e series in variables $\{t_j\}_{j\in J}$
associated with $C_J:=\cup_{j\in J} C_j$ for a given $\emptyset\neq J\subset I$ (cf. section~\ref{sec:relabsP}). By~\cite{CDK} (cf. Lemma~\ref{lem:Ppoly}) $P_{C_J}$ is a polynomial whenever $|J|>1$, hence it makes sense to consider the evaluation $P_{C_{J}}(1_{J}):=P_{C_{J}}(\tt_{J})|_{t_j=1, \forall j\in J}$. Moreover, for a branch $C_i$ let ${\rm pc}(P_{C_i})$ be the periodic constant of $P_{C_i}$ as defined in section~\ref{ss:pc}.
Then in Theorem~\ref{thm:delta-sec} we obtain a formulae for the $\delta$-invariant of a possibly reducible curve germ in terms of the Poincar\'e series
as the following alternating sum:
\begin{align}
\label{intro:delta1}& \delta(C_i)= -{\rm pc}(P_{C_i}(t_i));\\
\label{intro:delta2}& \delta(C)=
\sum_{i\in I}\delta(C_i) +
\sum_{J\subset \I, \ |J|>1} (-1)^{|J|} \ P_{C_{J}}(1_{J}).
\end{align}
Though the theory of curves and their delta invariants is a classical subject,
to the best of our knowledge, formula~\eqref{intro:delta2} is new in the literature.

In particular, as it will be shown in section~\ref{ss:spec}, for plane curve singularities
formula~\eqref{intro:delta2} is equivalent to the well-known formula
$\delta(C)=\sum_i \delta(C_i)+\sum_{i<j}(C_i,C_j)$, since $P_{C_{i,j}}(1,1)=(C_i,C_j)$ gives the
intersection multiplicity and $P_{C_{J}}(1_{J})=0$ for any $|J|>3$.
\vspace{4mm}

{\bf Part II}. \
Following with the notation introduced in section~\ref{ss:intro1}, let us fix a normal surface
singularity $(X,0)$, a good resolution $\pi$ and let $V$ be the set of vertices of the dual
resolution graph $\Gamma$. Denote $r_h=\sum_{v\in V} l'_vE_v\in L'$ the unique representative of
$h$ (that is, $h=[r_h]\in L'/L=H$) with all $l'_v\in \Q\cap [0,1)$.
For $\ell'=\sum_v l'_vE_v$ and $\ell''=\sum_v l''_vE_v$ we write $\ell'\geq \ell'' $ if
$l'_v\geq l''_v $ for all $v\in V$. Let $Z(\tt)=\sum_{\ell'\in L'} z(\ell')\tt^{\ell'}$ be the topological Poincar\'e series, where $\tt^{\ell'}:=\prod_v t_v^{l'_v}$ for any $\ell'=\sum_vl'_vE_v$. The series $Z(\tt)$ decomposes as $\sum_{h\in H} Z_{h}(\tt)$, where
$Z_h(\tt):=\sum _{[\ell']=h}z(\ell')\tt^{\ell'}$. Furthermore, for any subset $I \subset V$
we write $Z_h(\tt_I):=Z_h(\tt)|_{t_i=1, \, i\not\in I}$. Fix $h\in H$ and $I\subset V$ and
define the {\it counting function of the coefficients of $Z(\tt_I)$} by
\begin{equation*}
Q_{h,I}: \{x\in L'\,:\, [x]=h\}\to \ZZ, \ \ \ \
Q_{h,I}(x):=\sum_{\ell'|_I \ngeq x |_I,
\, [\ell']=[x]} z(\ell').
\end{equation*}
Note that the sum above is finite, see section~\ref{ss:PZ}. Then, in~\cite{LNNdual} J.~Nagy
and the second and fourth authors proved that the periodic constant of the series $Z_h(\tt_I)$
(defined via the regularization procedure from section~\ref{ss:pc}) for any $I\subset V, I\neq \emptyset$ and $h\in H$ 
satisfies $\mathrm{pc}(Z_h(\tt_{I}))= Q_{[Z_K]-h,I}(Z_K-r_h)$. This result transforms the Ehrhart-MacDonald-Stanley 
reciprocity law from the theory of lattice polytopes to the level of series, giving the duality between the periodic 
constant of $Z_h(\tt_{I})$ and a finite sum of coefficients of the \textit{dual} series~$Z_{[Z_K]-h}(\tt_{I})$.

In this paper, we generalize the above duality for the twisted series $Z_{\ell_0'}(\tt):=\tt^{\ell_0'}\cdot Z(\tt)$ 
for some fixed $\ell_0'\in \cS'$ with $[\ell_0']=h_0$ and give the following {\it twisted duality} in Theorem~\ref{tdual} as follows:
\begin{equation}
\label{eq:pc1}
\mathrm{pc}((Z_{\ell'_0})_h(\tt_{\I}))=Q_{[\ZK]-h+h_0,\I}(\ZK-r_h+\ell'_0).
\end{equation}
In particular, for $h=0$ one obtains
\begin{equation}
\label{eq:pc2}
\mathrm{pc}((Z_{\ell'_0})_0(\tt_{\I}))= Q_{[\ZK]+h_0,\I}(\ZK+\ell'_0).
\end{equation}

It is worth emphasizing here that identity~\eqref{eq:pc1} has two important specializations:
the first one gives back the result from~\cite{LNNdual}, the other one is the case $h=0$ as stated in~\eqref{eq:pc2}, 
which will be applied to an embedded curve germ $(C,0)\subset (X,0)$.
\vspace{4mm}

{\bf Part III}. \ We answer here Questions~\ref{question:1} and~\ref{question:3} in the rational case.
We consider the pair $(C,0)\subset (X,0)$ and assume that $(X,0)$ is rational. Fix a good embedded
resolution of the pair $C\subset X$ and denote by $I_C\subset V$ the set of irreducible exceptional
divisors which intersect the strict transform of $(C,0)$. Moreover, we choose the resolution in such
a way that every component from $I_C$ intersects only one component of the strict transform.

First, the connection between Part I and II is derived in the rational case from the identity of
Campillo, Delgado, and Gusein-Zade~\cite{CDGZ-PScurves} (called the relative CDGZ-identity,
cf. section~\ref{ss:rCDGZ})
$$P_C(\tt_{I_C})=Z_0^C(\tt_{I_C}),$$
which identifies the Poincar\'e series of the curve germ $(C,0)$ with the reduction of the $h=0$-part
of the \textit{relative topological Poincar\'e series}
$Z^C(\tt)=Z(\tt)\cdot \Pi_{v\in I_C}(1-\tt^{E_v^*})$.

Second, motivated by the theory of adjoints and its application to singularity
theory~\cite{Libgober-alexander,Libgober-characteristic,ji-tesis} another embedded analytic invariant --\,the $\kappa$-invariant $\kappa_X(C)$\,-- appears in this picture. Its definition has been already given in~\cite{kappa} as a conceptual generalization of the special cases considered previously in~\cite{ji-tesis,ji-JJ-numerical,CM}. In general, $\kappa_X(C)$ is
defined as the counting function of the analytical Poincar\'e series of $(X,0)$ evaluated at
$Z_K+\ell'_C$. Thus, when $(X,0)$ is rational, by the CDGZ-identity it is expressed by the counting function of $Z(\tt)$. More precisely, in Theorem~\ref{thm:eqratsing} we prove the following result for the $\kappa$-invariant of
 a reduced curve germ $(C,0)$ embedded into a rational surface singularity $(X,0)$ as follows:
\begin{equation}
\label{eq:intro:kappa}
\kappa_X(C)=Q_{[\ZK+\ell'_C]}(\ZK+\ell'_C)=Q_{[\ZK+\ell'_C],\I_C}(\ZK+\ell'_C).
\end{equation}

Finally, the twisted duality allows us to connect the periodic constant of $P_C$ with $\kappa_X(C)$,
and to use different methods regarding the counting function of $Z(\tt)$ (and developed recently
by~\cite{LNehrhart,LSzPoincare,LNN,LNNdual}) in order to find the following explicit formula for the $\delta$-invariant
of a reduced curve germ $(C,0)$ embedded into a rational surface singularity $(X,0)$ as follows:
\begin{align}
\label{intro:delta3}& \delta(C)=\kappa_X(C)=\chi(Z_K+\ell'_C)-\chi(s_{[Z_K+\ell'_C]});\\
\label{intro:delta4}& A_{X,0}(C)=\chi(s_{[Z_K+\ell'_C]})=\chi(s_{[-\ell'_C]}).
\end{align}

Note that~\eqref{intro:delta3} and \eqref{intro:delta4} are also proven in~\cite{kappa} using completely analytical methods
(without any connection with Poincar\'e series). The above topological approach can be summarized in
the following schematic picture:
\vspace{1mm}

\begin{center}
\begin{tikzpicture}[scale=0.7]
\draw (-6.5,3.6) ellipse (1.5 and 1);
\draw (-1,5) ellipse (1.5 and 0.8); 
\draw (5,3.7) ellipse (2.2 and 1.3); 
\draw (-2.8,3) rectangle (0.8,1.5);
\draw (-2.8,0) rectangle (0.8,-1);
\node at (-6.5,4) {\small $\delta(C)$ via};
\node at (-6.5,3.2) {\small $\mathrm{pc}(P_C)$};
\node at (-1,5) {$\kappa_X(C)$};
\node at (5,4) {\small Twisted duality};
\node at (5,3.2) {\small for $Z(\mathbf{t})$};
\node at (-1,2.5) {\tiny CDGZ-identity};
\node at (-1,2) {\tiny for $(X,0)$ rational};
\node at (-1,-0.5) {\small $\delta(C)=\kappa_X(C)$};
\draw [-triangle 45,dotted,line width=0.75] (-4.9,3.5) -- (-3,3);
\draw [-triangle 45,dotted,line width=0.75] (-1,4.1) -- (-1,3.2);
\draw [-triangle 45,dotted,line width=0.75] (2.7,3.5) -- (1,3);
\draw [-triangle 45,line width=0.75] (-1,1.4) -- (-1,0.3);
\node at (-1,1.5) {};
\node at (-1,0.5) {};
\end{tikzpicture}
\end{center}

\subsection{Organization} The paper is organized as follows. Section~\ref{sec:prelim} contains the 
necessary ingredients of the paper such as the basic tools for study \RHS surface singularities, 
the theory of counting functions and periodic constants of multivariable series and useful results 
about Poincar\'e series of curves and surface singularities. Section~\ref{sec:kappa-inv}
reviews first the general definition of the $\kappa$-invariant from \cite{kappa}, then discusses its 
topological and reduced analogues together with their identification in the case of rational surface 
singularities. Then, section~\ref{sec:formula-delta} contains the first part of the results about the 
$\delta$-invariant of an abstract curve germ, section~\ref{sec:twisted-duality} is about the twisted 
duality of the topological Poincar\'e series and section~\ref{sec:deltakappa} presents the topological 
approach of the applications discussed above. We also give some examples in section~\ref{sec:examples}
illustrating the concepts and results of the paper.

\subsection*{Acknowledgments}
The first and third authors want to thank the Fulbright Program (within the Jos\'e Castillejo and Salvador de Madariaga
grants by Ministerio de Educaci\'on, Cultura y Deporte) for their financial support while writing this paper. They also want
to thank the University of Illinois at Chicago, especially Anatoly Libgober, Lawrence Ein, and Kevin Tucker for their warm
welcome and support in hosting them as well as their useful discussions.

The second author expresses his gratitude to the community of the R\'enyi Institute of Mathematics for their 
kindness and for the support he received from them during the time he worked at this renown institution.

\section{Preliminaries}
\label{sec:prelim}
\subsection{Lattices for surface singularities with \texorpdfstring{\RHS}{RHS} links}
\label{sec:lattice}
Let us consider a complex normal surface singularity $X=(X,0)$. Let $\pi:\tilde X\to X$ be a good resolution with dual
resolution graph $\Gamma$ whose set of vertices are denoted by $\V$. Let $\{E_v\}_{v\in \V}$ be the irreducible components
of the exceptional set $\pi^{-1}(0)$. We assume that the link $\Sigma$ is a rational homology sphere, i.e. $\Gamma$ is a 
connected tree and all $E_v$ are rational.

Define the lattice $L$ as $H_2(\tilde X,\ZZ)$, it is generated by the exceptional divisors $E_v$, $v\in \V$, that is,
$L=\oplus_{v\in \V} \ZZ\langle E_v \rangle$. In the homology exact sequence of the pair $(\tilde X, \Sigma)$ 
($\partial \tilde X=\Sigma$) one has $H_2(\Sigma,\ZZ)=0$, $H_1(\tilde X, \ZZ)=0$, hence the exact sequence has the form:
\begin{equation}
\label{eq:ses}
0 \to L \to H_2(\tilde X,\Sigma,\ZZ) \to H_1(\Sigma,\ZZ) \to 0.
\end{equation}
Set $L':= \Hom(H_2(\tilde X,\ZZ),\ZZ)$.
The Lefschetz-Poincar\'e duality $H_2(\tilde X,\Sigma,\ZZ)\cong H^2(\tilde X,\ZZ)$
defines a perfect pairing
$L\otimes H_2(\tilde X,\Sigma,\ZZ)\to \ZZ$. Hence $L'$
can be identified with $H_2(\tilde{X}, \Sigma, \ZZ)$. By~\eqref{eq:ses}
$L'/L\cong H_1(\Sigma,\ZZ)$, which will be denoted by $H$. (Note that even if $\Sigma$ is not
${\mathbb Q}HS^3$, since the intersection form on
$L$ is non--degenerate, $H_2(\tilde{X},\ZZ)\to H_2(\tilde{X},\Sigma, \ZZ)$ is injective, and
$L'/L= {\rm Tors}(H_1(\Sigma,\ZZ))$.)
Since the intersection form is non--degenerate, $L'$ embeds into $ L_{{\mathbb Q}}:=
 L\otimes {\mathbb Q}$,
and it can be identified with the rational cycles $\{\ell'\in L_{{\mathbb Q}}\,:\, (\ell',L)_{{\mathbb Q}}\in \ZZ\}$, where
$(\,,\,)$ denotes the intersection form on $L$ and $(\,,\,)_{{\mathbb Q}}$ its extension to $L_{{\mathbb Q}}$.
 Hence, in the sequel we regard $L'$ as $\oplus_{v\in \V} \ZZ\langle E^*_v \rangle$,
 the lattice generated by the rational cycles $E^*_v\in L_{{\QQ}}$,
$v\in \V$, where $(E_u^*,E_v)_{{\QQ}}=-\delta_{u,v}$ (Kronecker delta) for any $u,v\in \V$.

The elements $E^*_v$ have the following
geometrical interpretation as well: consider $\gamma_v\subset \tilde X$ a curvette associated with $E_v$, that is, a smooth
irreducible curve in $\tilde X$ intersecting $E_v$ transversally. Then
$\pi^*\pi_*(\gamma_v)=\gamma_v+E^*_v$.

Let $K_{\tilde X}$ be the canonical divisor of the smooth surface $\tilde X$. The canonical divisor in $(X,0)$ is defined as
$K_X:=\pi_*(K_{\tilde X})$. Note that $K_\pi:=K_{\tilde X}-\pi^*(K_X)$ has support on the exceptional set $\pi^{-1}(0)$.
The divisor $K_\pi$ is called the relative canonical divisor of $\pi$, and it is determined topologically by the
linear system of {\it adjunction relations}
\begin{equation}
\label{eq:KX}
(K_\pi+E_v,E_v)+2=0, \textrm{ for all } v\in \V.
\end{equation}
In some cases, it is more convenient to use the canonical cycle $\ZK:=-K_\pi$. Using~\eqref{eq:KX}, $Z_K$ can be written as
\begin{equation}\label{eq:ZK}
\ZK=E-\sum_{v\in \V} (2-\text{val}(v)) E^*_v,
\end{equation}
where $E=\sum_{v\in \V}E_v$ and $\text{val}(v)$ denotes the valence of $v$ in $\Gamma$.
In particular, $Z_K\in L'$.

For any non--zero effective cycle $\ell\in L$, from the cohomology exact sequence associated with
$0\to \cO_{\tilde X}(-\ell)\to \cO_{\tilde X}\to \cO_{\ell}\to 0$
one has $\chi(\cO_\ell)=\chi(\cO_{\tilde X})-\chi(\cO_{\tilde X}(-\ell))$.
 This, by the Riemann-Roch-Hirzebruch theorem gives
$\chi(\cO_\ell)=-(\ell, \ell-Z_K)/2$. This motivates to define $\chi(\ell'):=-(\ell', \ell'-Z_K)/2$
for any $\ell'\in L'$.

\subsection{\texorpdfstring{$H$}{H}-representatives and the Lipman cone}\label{ss:HrepLip}
For $\ell'_1,\ell'_2\in L_\QQ$ with $\ell'_i=\sum_v l'_{iv}E_v$ ($i=\{1,2\}$)
one considers an order relation $\ell'_1\geq \ell'_2$ defined coordinatewise by $l'_{1v}\geq l'_{2v}$
for all $v\in\V$. In particular,
$\ell'$ is an effective rational cycle if $\ell'\geq 0$.
We set also $\min\{\ell'_1,\ell'_2\}:= \sum_v\min\{l'_{1v},l'_{2v}\}E_v$ and
analogously $\min\{F\}$ for a finite subset $F\subset L_\QQ$.

Given an element $\ell'\in L'$ we denote by $[\ell']\in H$ its class in $H=L'/L$.
The lattice $L'$ admits a partition parametrized by the group $H$, where for any $h\in H$ one sets
\begin{equation}
\label{eq:Lprime}
L'_h=\{\ell'\in L'\mid [\ell']=h\}\subset L'.
\end{equation}
Note that $L'_0=L$.
Given an $h\in H$ one can define $r_h:=\sum_v l'_v E_v\in L'_h$ the unique element of $L_h'$ such that $0\leq l'_v<1$.
Equivalently, $r_h=\sum_v \{l'_v\} E_v$ for any $\ell'=\sum_v l'_v E_v\in L'_h$, where $0\leq \{\cdot\}<1$
represents the usual fractional part.

We define the rational Lipman cone by
$$\cS_\QQ:=\{\ell'\in L_\QQ \ | \ (\ell',E_v)\leq 0 \ \mbox{for all} \ v\in \V\},$$
which is a cone generated over $\QQ_{\geq 0}$ by $E^*_v$.
Define $\cS':=\cS_\QQ\cap L'$ as the semigroup (monoid) of anti-nef rational cycles of $L'$; it is generated over 
$\mathbb{Z}_{\geq 0}$ by the cycles $E^*_v$. As mentioned in section~\ref{sec:lattice}, any element of $\cS'$ can 
be obtained as the exceptional part of the pull-back of an effective divisor of~$X$.

The Lipman cone $\cS'$ also admits a natural equivariant partition indexed by $H$ by $\cS'_{h}=\cS'\cap L'_h$.
Note the following properties of the Lipman cone:
\begin{enumerate}
\item[(a)] $s_1,s_2\in \cS'_{h}$ implies $s_2-s_1\in L$ and hence $\min\{s_1,s_2\}\in \cS'_h$.
\item[(b)]\label{prop:sh}
for any $s\in L_{{\QQ}}$ the set $\{s'\in \cS'_h \mid s'\not\geq s\}$ is finite,
 since $\{E^*_v\}_v $ have positive entries.
\item[(c)] for any $h$ there exists a unique \textit{minimal cycle} $s_h:=\min \{\cS'_{h}\}$~(see~\ref{ss:GLA} below).
\item[(d)] $\cS'_h$ is a cone with \textit{vertex} $s_h$ in the sense that $\cS'_h=s_h+\cS'_0$.
\end{enumerate}
\subsubsection{{\bf Generalized Laufer's algorithm}}\label{ss:GLA}
\cite[Lemma 7.4]{NemOSZ} For any $\ell'\in L'$ there exists a unique
minimal element $s(\ell')$ of the set $\{s\in \cS' \,:\, s-\ell'\in L_{\geq 0}\}$. It can be obtained by the following algorithm.
Set $x_0:=\ell'$. Then one constructs a computation sequence $\{x_i\}_i$ as follows.
If $x_i$ is already constructed and $x_i\not\in\cS'$ then there exits some $E_{u_i}$ such that $(x_i,E_{u_i})>0$.
Then take $x_{i+1}:=x_i+E_{u_i}$ (for some choice of $E_{u_i}$). Then the procedure after finitely many steps stops,
say at $x_t$, and necessarily $x_t=s(\ell')$.

Note that $s(r_h)=s_h$ and $r_h\leq s_h$, however, in general $r_h\neq s_h$.
(This fact does not contradict the minimality of $s_h$ in $\cS'_h$ since $r_h$ might not sit in $\cS'_h$.)
Also, if $\ell'\in L'_{\leq 0}$, then $s(\ell')=s_{[\ell']}$.

\subsection{Local divisor class group}\label{ss:LDCG} 
Using the exponential exact sequence of $\tilde X$ (and the notation 
$H^1(\tilde X, \cO^*_{{\tilde X}})={\rm Pic}(\tilde X)$ 
and, as usual, $L'=H^2(\tilde X,\ZZ)\simeq H_2(\tilde X,\Sigma,\ZZ)$) we get
\begin{equation}\label{eq:picard} 0\to H^1(\tilde X, \cO_{{\tilde X}})\to {\rm Pic}(\tilde X)\to L'\to 0.\end{equation}
$L$ embeds naturally in both $L'$ and in ${\rm Pic}(\tilde X)$ (in the second one by $\ell \mapsto \cO_{{\tilde X}}(\ell)$).
The group ${\rm Pic}(\tilde X)/L$ is the {\it local divisor class group of }~$(X,0)$, that is, the group of local Weil divisors modulo the local Cartier divisors.
 In particular, we have (the resolution independent) exact sequence
\begin{equation}\label{eq:picard2}
0\to H^1(\tilde X, \cO_{{\tilde X}})\to {\rm Weil}(X)/ {\rm Cartier}(X)\to H_1(\Sigma, \ZZ)\to 0.
\end{equation}
Recall that $h^1(\tilde X, \cO_{{\tilde X}})=p_g(X,0)$ is the
{\it geometric genus}~of the germ $(X,0)$. The singularity $(X,0)$ is called
{\it rational}~if $p_g(X,0)=0$. By the above exact sequences, for rational singularities one has ${\rm Pic}(\tilde X)=L'$ (that is,
any line bundle of $\tilde X$ is determined topologically by its first Chern class) and also, the local divisor class group is
isomorphic with $H=H_1(\Sigma, \ZZ)$. Note that the morphism is induced as follows: take a divisor $D$, then the homology class of its boundary
$\partial D\subset \partial \tilde X=\Sigma$ gives the correspondence. Here we warn the reader that
if $C$ is a reduced Weil divisor germ in $X$, and we set $\pi^*C=\widetilde{C}+\ell'_C$, where $\widetilde{C}$ is the
strict transform and $\ell'_C\in L'$, then usually in this manuscript we set $h=[\ell'_C]$. Hence, since
$\widetilde{C}+\ell'_C=0$ in $H_2(\tilde X, \Sigma,\ZZ)$, the class of $\partial \widetilde{C}$ is $-h$.

\subsection{Multivariable series}
\label{ss:set}
In this subsection we will use the following (slightly more general)
notation. $L$ is a lattice freely generated by base elements $\{E_v\}_{v\in V}$,
$L' $ is an overlattice of the same rank (not necessarily dual of $L$), and we set $H:=L'/L$, a finite abelian group of order $d$.
(The partial ordering is defined as in subsection~\ref{ss:HrepLip}.)
Let $\ZZ[[L']]$ be the $\ZZ$-module
consisting of the $\ZZ$-linear combinations of the monomials $\mathbf{t}^{\ell'}:=\prod_{v\in \V}t_v^{l'_v}$,
where $\ell'=\sum_{v}l'_v E_v\in L'$.
Note that it is a $\ZZ$-submodule of the formal power series in variables $t_v ^{1/d}, t_v^{-1/d}$, $v\in V$.

Consider a multivariable series $S(\tt)=\sum_{\ell'\in L'}a(\ell')\tt^{\ell'}\in \ZZ[[L']]$.
Let $\Supp S(\tt):=\{\ell'\in L' \mid a(\ell')\neq 0\}$ be the support of the
series and we assume the following finiteness condition: for any $x\in L'$
\begin{equation}\label{eq:finiteness}
\{\ell'\in \Supp S(\tt)\mid \ell'\not\geq x\} \ \ \mbox{is finite}.
\end{equation}

Throughout this paper we will use multivariable series in $\ZZ[[L']]$ as well as in $\ZZ[[L'_\I]]$ for any
$\I\subset \V$, where $L'_\I={\rm pr}_I(L')$ is the projection of $L'$ via ${\rm pr}_I:L_{{\QQ}} \to \oplus_{v\in \I} \QQ \langle E_v\rangle$.
For example, if $S(\tt)\in \ZZ[[L']]$ then $S(\tt_{\I}):=S(\tt)|_{t_v=1,v\notin \I}$ is an element of $\ZZ[[L'_\I]]$.
In the sequel we use the notation $\ell'_I=\ell'|_I:= {\rm pr}_I(\ell')$ and $\tt^{\ell'}_\I:=\tt^{\ell'}|_{t_v=1,v\notin \I}$ for any $\ell'\in L'$.
Each coefficient $a_\I(x)$ of $S(\tt_\I)$ is obtained as a summation of certain coefficients $a(y)$ of $S(\tt)$, where $y$ runs over
$ \{\ell'\in \Supp S(\tt)\mid \ell'|_I =x \}$
(this is a finite sum by~\eqref{eq:finiteness}). Moreover, $S(\tt_\I)$ satisfies a similar finiteness property as ~\eqref{eq:finiteness}
in the variables~$\tt_\I$.

Any $S(\tt)\in \ZZ[[L']]$ decomposes in a unique way as $S(\tt)=\sum_h S_h(\tt)$,
where $S_h(\tt):=\sum_{[\ell']=h}a(\ell')\tt^{\ell'}$. $S_h(\tt)$ is called the
$h$-part of $S(\tt)$.
Note that the $H$-decomposition of the \rel series is not well defined. That is, the restriction
$S_h(\tt)|_{t_v=1,v\notin \I}$ of the $h$-part $S_h(\tt)$ cannot be recovered from $S(\tt_\I)$ in general,
since the class of $\ell'$ cannot be recovered from $\ell'|_I$.
Hence, the notation $S_h(\tt_\I)$ (defined as $S_h(\tt)|_{t_v=1,v\notin \I}$) is not ambiguous, but requires certain caution.

\subsection{Counting functions}
\label{ss:countingfunctions}
Given a multivariable series $A(\tt_\I)\in \ZZ[[L'_\I]]$ for $\emptyset\neq \I\subset \V$
(eg., $A(\tt_\I)=S(\tt_\I)$ or $A(\tt_\I)=S_h(\tt_\I)$ for $h\in H$)
two functions associated with the coefficients of $A(\tt_{\I})$ can be considered
(cf. \cite{Nem-PS, LSzPoincare}).
The first one is called the (original) \textit{counting function} (for the motivation of the summation-type see \ref{sec:pg}):
\begin{equation}\label{eq:count1}
Q{(A(\tt_\I))}: L'_\I\longrightarrow \ZZ, \ \ \ \
x_\I\mapsto \sum_{\ell'_\I\ngeq x_\I} a(\ell'_\I).
\end{equation}
It is well defined whenever $A$ satisfies the finiteness condition~\eqref{eq:finiteness}.
If $A(\tt_\I)=S_h(\tt_\I)$ then
$Q{(A(\tt_\I))}$ will also be denoted by~$Q^{(S)}_{h,\I}$.
In this case,
$$
Q{(S(\tt_\I))}=\sum_{h\in H} Q{(S_h(\tt_\I))}=\sum_{h\in H} Q^{(S)}_{h,\I}.
$$

The second function is called the \textit{modified counting function} and it is defined by
\begin{equation}\label{eq:modcount1}
q{(A(\tt_\I))}: L'_\I\longrightarrow \ZZ, \ \ \ \
x_\I\mapsto \sum_{\ell'_\I \prec x_\I} \ a(\ell'_\I),
\end{equation}
where the order relation $\ell'_\I \prec x_\I$ means $\ell'_v<x_v$ for all $v\in \I$.
(A new symbol $x\prec y$ is introduced to avoid ambiguity with $x<y$, which means $x\leq y$ and $x\neq y$.)

The inclusion-exclusion principle connects the two counting functions, namely,
\begin{equation}\label{eq:InEx}
Q^{(S)}_{h,\I}(x)=\sum_{\emptyset \not=\J\subset \I} \ (-1)^{|\J|+1} q^{(S)}_{h,\J}(x).
\end{equation}
Both counting functions (and hence identity~\eqref{eq:InEx} as well) can be extended to $x\in L'$
via the projection $L'\to L'_\I$, that is, $Q^{(S)}_{h,\I}(x)=Q^{(S)}_{h,\I}(x_\I)$.

\subsection{\bf Periodic constants \texorpdfstring{\cite{LNehrhart}}{20}}\label{ss:pc}
Let $S(\tt) \in \ZZ[[L']]$ be a series satisfying the finiteness condition~\eqref{eq:finiteness}
and consider its $h$-part $S_h(\tt)$ for a fixed $h \in H$.
Let $\mathcal{K}\subset L'\otimes\mathbb{R}$ be a real closed cone whose affine closure is top dimensional.
Assume that there exist $\ell'_* \in \mathcal{K}$ and a finite index sublattice $\widetilde{L}$ of $ L$ and a
quasi-polynomial $\qp{Q}^{{\mathcal K},(S)}_{h,\V}(x)$
defined on $\widetilde{L}$ such that
\begin{equation}
\label{eq:qpol}
\qp{Q}_{h,\V}^{{\mathcal K},(S)}(\ell) = Q_{h,\V}^{(S)}(r_h+\ell)
\end{equation}
whenever $\ell\in (\ell'_* +\mathcal{K})\cap \widetilde{L}$ ($r_h\in L'$ is defined similarly as in~\ref{ss:HrepLip}).
Then we say that the counting function $Q_{h,\V}^{(S)}$ (or just $S_h(\mathbf{t})$)
\textit{admits a quasi-polynomial} in $\mathcal{K}$, namely $\widetilde{L}\ni \ell\mapsto
\qp{Q}_{h,V}^{\mathcal{K},(S)}(\ell)$.
In this case, we define the \textit{periodic constant} of $S_h(\tt)$ associated with $\mathcal{K}$ by
\begin{equation}\label{eq:PCDEF}
\mathrm{pc}^{\mathcal{K}}(S_h(\tt)) := \qp{Q}_{h,\V}^{{\mathcal K},(S)}(0) \in \ZZ.
\end{equation}
The number $\mathrm{pc}^{\mathcal{K}}(S_h(\tt))$ is independent of the
choice of $\ell'_*$ and of the finite index sublattice
$\widetilde L\subset L$.

The same construction can be applied to the modified counting function as well. Namely, if
$q_{h,\V}^{(S)}$ admits a quasi-polynomial in $\mathcal{K}$, say $\qp{q}_{h,\V}^{\, {\mathcal K},(S)}(x)$, then the
{\em modified periodic constant} of $S_h(\tt)$ (or simply the periodic constant of $q_{h,V}^{{\mathcal K},(S)}$)
associated with $\mathcal{K}$ is defined by
\begin{equation}\label{eq:MPCDEF}
\mathrm{mpc}^{\mathcal{K}}(S_h(\tt)) :=\qp{q}_{h,\V}^{\, {\mathcal K},(S)}(0).
\end{equation}
In some cases we might drop the indices $\mathcal{K}$ or $S$ if there is no ambiguity regarding them.

Given any $\I\subset \V$ the natural group homomorphism ${\rm pr}_I:L'\to L'_I$ preserves the lattices $L\to L_I$
and hence it induces a homomorphism $H\to H_I:=L'_I/L_I$, denoted by $h\mapsto h_I$. (However, note that even if
$L'$ is the dual of $L$ associated with a form $(\,,\,)$,
$L_I'$ usually is not a dual lattice of $L_I$, it is just an overlattice. This fact motivates the general setup of the present subsection.)
In this \textit{projected context} one can define again
the (modified) periodic constant associated with the \rel series $S_h(\tt_I)$ from the previous paragraph exchanging
$\V$ (resp. $\tt$, $r_h$) by $\I$ (resp. $\tt_I$, $r_{h_I}$).

\begin{exam}
\label{ex:pc}
\mbox{}
\begin{enumerate}
 \item\label{ex:pc1}
 If $S_h(\tt)$ is a (Laurent) polynomial in $\ZZ[[L']]$, that is, $\Supp(S_h(\tt))$ is finite, then
 its counting function $Q^{(S)}_{h,V}$ is constant for large enough values and this constant
 equals the sum of its coefficients,
 that is, $S_h(1)$. Hence  $\qp{Q}^{\, {\mathcal K},(S)}_{h,V}$ eg. for ${\mathcal K}=(\mathbb{R}_{\geq 0})^{|V|}$
 is the constant map~$S_h(1)$ and thus its periodic constant exists and equals~$S_h(1)$.
(The periodic constant for ${\mathcal K}=(\mathbb{R}_{\leq 0})^{|V|}$ also exists and it equals zero.)
 \item\label{ex:pc2}
 The periodic constant of one-variable series was introduced in~\cite{Ok,NOk} as follows.
 For simplicity, we assume that $L=L'\simeq\ZZ$ and let $S(t)=\sum_{\ell\geq 0}c_\ell t^\ell \in \mathbb{Z}[[t]]$ be a
 formal power series in one variable. If for some $p\in \mathbb{Z}_{>0}$ the counting function
 $Q^{(p)}(n):=\sum_{\ell=0}^{pn-1}c_\ell$ is a polynomial $\qp{Q}^{(p)}$ in $n$, then the constant term
 $\qp{Q}^{(p)}(0)$ is independent of $p$ and it is called the periodic constant $\mathrm{pc}(S)$ of the series~$S$.
(Here $\widetilde{L}=p\ZZ$ and the cone is automatically the `positive cone'.)
 \item\label{ex:pc3}
 Assume that the coefficients $\ell \mapsto c_{\ell}$ of the one-variable series $S(t)$ is given by a Hilbert function,
 which admits a Hilbert polynomial $H(\ell)$ with $c_{\ell}=H(\ell)$ for $\ell\gg 0$. Then
 a computation shows that the periodic constant
of the
 regularized series $S^{reg}(t)=\sum_{\ell\geq 0}H(\ell)t^{\ell}$ is zero, hence
 $\pc(S)=\pc(S-S^{reg})+\pc(S^{reg})=(S-S^{reg})(1)$, measuring the difference between the Hilbert function and
 Hilbert polynomial. Eg., if $S(t)$ is the generating series of a numerical semigroup
 ${\mathfrak{S}}$
 with finite complement in $\ZZ_{\geq 0}$, then ${\rm pc}(S)=-
 |\ZZ_{\geq 0}\setminus \mathfrak{S}|$.
\end{enumerate}
\end{exam}

\subsection{Poincar\'e series of surface singularities}\label{s:Pseries}
\subsubsection{}\label{ss:uac}
We fix a good resolution $\pi$ of $X$. Consider $c:Y\to X$, the universal abelian covering of $(X,0)$, let
$\tilde Y$ be the normalized pull-back of $\pi$ and $c$, and denote by $\pi_Y$ and $\tilde{c}$
the induced maps by the pull-back completing the following commutative diagram.
\begin{equation}
\label{eq:diagram}
\xymatrix{
\ar @{} [dr] | {\#}
\tilde{Y} \ar[r]^{\tilde{c}} \ar[d]_{\pi_Y} & \tilde{X} \ar[d]^{\pi} \\
Y \ar[r]_{c} & X
}
\end{equation}

We define the following $H$-equivariant $L'$-indexed divisorial filtration of the local ring $\mathcal{O}_{Y}$:
for any given $\ell'\in L'$ set
\begin{equation}
\label{eq:Ffiltration}
\mathcal{F}(\ell'):=\{g\in \cO_{Y} \mid \div (g\circ \pi_Y)\geq \tilde c^* (\ell')\}.
\end{equation}
It is worth to mention
that the pull-back $\tilde c^*(\ell')$ is an integral cycle in $\tilde Y$ for any $\ell'\in L'$,
cf.~\cite[Lemma 3.3]{Nem-CLB}. The natural action of $H$ on $Y$ induces an action on $\cO_{Y}$ as follows:
$h\cdot g(y)=g(h\cdot y)$, $g\in \cO_{Y}$, $h\in H$. This action decomposes
$\cO_{Y}$ as $\oplus_{\lambda\in \hat{H}} (\cO_{Y})_{\lambda}$ according to the characters
$\lambda \in \hat{H}:={\rm Hom}(H,\CC^*)$, where
\begin{equation}
\label{eq:Heigenspaces}
(\cO_{Y})_{\lambda}:=\{g\in \cO_{Y} \mid g(h\cdot y)=\lambda (h)g(y),\ \forall y\in Y, h\in H\}.
\end{equation}
Note that there exists a natural isomorphism $\theta:H\to \hat{H}$ given by
$h\mapsto \exp(2\pi \sqrt{-1} (\ell',\cdot ))\in \Hom(H,\CC^*)$, where $\ell' $ is any element of $L'$ with
$h=[\ell']$. In order to simplify our notations we will use the notation $(\cO_Y)_h$ for $(\cO_Y)_{\theta(h)}$ (and similarly for any
linear $H$-representation).

The subspace $\mathcal{F}(\ell')$ is invariant under this
action and $ \mathcal{F}(\ell')_h=\mathcal{F}(\ell')\cap (\cO_{Y})_h$. Thus, one can define the \textit{Hilbert function}
 $\mathfrak{h}(\ell')$ for any $\ell'\in L'$
as the dimension of the $\theta([\ell'])$-eigenspace $(\cO_{Y}/\mathcal{F}(\ell'))_{[\ell']}$.
The corresponding multivariable \textit{Hilbert series} is
\begin{equation}
\label{eq:Hilbert}
H(\mathbf{t})=\sum_{\ell'\in L'} \mathfrak{h}(\ell')\mathbf{t}^{\ell'}\in \mathbb{Z}[[L']].
\end{equation}

The $H$-eigenspace decomposition of $\tilde c_{*}(\cO_{\tilde Y})$ is given by (see~\cite{Nem-PS,Okuma-abelian})
\begin{equation}\label{eq:decUAC}
\tilde c_{*}(\cO_{\tilde Y})=\bigoplus_{h\in H}\cO_{\tilde X}(-r_h) \  \ \mbox{with} \
\cO_{\tilde X}(-r_h)=(\tilde c_{*}(\cO_{\tilde Y}))_h,
\end{equation}
where $\cO_{\tilde X}(\ell')$ is the only line bundle $\mathcal L$ on $\tilde X$ satisfying
$\tilde c^*\mathcal L=\cO_{\tilde Y}(\tilde c^*(\ell'))$ (see~\cite[3.5]{Nem-CLB}) and $r_h$ is the
representative of $h$ as in section~\ref{ss:HrepLip}.
(As a word of caution: $r_h$ is a $\QQ$-divisor in $\tilde X$, and the notation
$\cO_{\tilde{X}}(-r_h)$ here is different from the one used by Sakai in~\cite{Sakai84}.)

\subsection{{\bf Some useful expressions for $\mathfrak{h}(\ell')$}}\label{ss:vanishingth}
For any $\ell'=\ell+r_h>0$
one has the following alternative expression of the Hilbert function (cf.~\cite[Corollary.~4.2.4]{Nem-PS})
\begin{equation}\label{eq:hdim}
\mathfrak{h}(\ell')=\dim \frac{H^0(\tilde X, \cO_{\tilde X}(-r_h))}{H^0(\tilde X, \cO_{\tilde X}(-r_h-\ell))}.
\end{equation}
Furthermore, for
 $\ell\in L$ effective and $\ell'_1\in L'$ one has the
 restriction exact sequence
\begin{equation}
\label{eq:restriction2}
0\to \cO_{\tilde X}(-\ell-\ell'_1)\to \cO_{\tilde X}(-\ell'_1)\to \cO_{\ell}(-\ell'_1)\to 0.
\end{equation}
Applying~\eqref{eq:restriction2} to $\ell'_1=r_h$ and writing $\ell'=\ell+r_h$ one obtains
$$\mathfrak{h}(\ell') -\chi(\cO_{\ell}(-r_h))+h^1(\cO_{{\tilde X}}(-\ell'))-h^1(\cO_{{\tilde X}}(-r_h))=0$$
or equivalently,
\begin{equation}\label{eq:hell}
\mathfrak{h}(\ell') =\chi(\ell')-h^1(\cO_{{\tilde X}}(-\ell'))-\chi(r_h)+ h^1(\cO_{{\tilde X}}(-r_h)).
\end{equation}
On the other hand, in the context of the generalized Laufer's algorithm, since $x_{i+1}=x_i+E_{u_i}$
(see~\ref{ss:GLA}) using equation~\eqref{eq:restriction2} repeatedly and also $(x_i,E_{u_i})>0$ one obtains
\begin{equation}\label{eq:sell'}
h^1(\cO_{{\tilde X}}(-\ell'))-\chi(\ell')=h^1(\cO_{{\tilde X}}(-s(\ell')))-\chi(s(\ell')).
\end{equation}
Note that, if $\ell'=r_h$ then $s(r_h)=s_h$, hence
\begin{equation}\label{eq:rhsh}
h^1(\cO_{{\tilde X}}(-r_h))-\chi(r_h)=h^1(\cO_{{\tilde X}}(-s_h))-\chi(s_h).
\end{equation}

\subsubsection{\bf{The series $P(\mathbf{t})$ and $Z(\mathbf{t})$}}\label{ss:PZ}
Campillo, Delgado and Gusein-Zade~\cite{CDG-Poincare,CDGZ-PSuniversalAC} have defined a different series too,
it is called the
\textit{multivariable analytic Poincar\'e series} $P(\mathbf{t})=\sum_{\ell'\in L'}\mathfrak{p}(\ell')\mathbf{t}^{\ell'}$.
It can be defined by the identity
\begin{equation}
\label{eq:analPoincare}
P(\mathbf{t})=-H(\mathbf{t})\cdot \prod_{v\in \V}(1-t_v^{-1}).
\end{equation}
Although by considering $P$ instead of $H$ one might in principle loose some information
(because $\prod_{v}(1-t_v^{-1})$ is a zero divisor), in fact, $P$ and $H$ do contain the same amount of
information (see~\cite[p.~50]{CDG-Poincare}). See~\cite{Nem-CLB} for the next concrete inversion identity
\begin{equation}\label{eq:hinv}
\mathfrak{h}(\ell')=\sum_{\ell\in L,\ell\ngeq 0} \mathfrak{p}(\ell'+\ell).
\end{equation}

The \textit{multivariable topological Poincar\'e series} (cf.~\cite{CDG-Poincare,CDGZ-PSuniversalAC,Nem-CLB})
is the Taylor expansion $Z(\tt)=\sum_{\ell'} z(\ell')\tt^{\ell'} \in\ZZ[[L']]$
at the origin of the rational \textit{zeta function}
\begin{equation}\label{eq:1.1}
f(\tt)=
\prod_{v\in \V} (1-\tt^{E^*_v})^{\text{val}(v)-2}.
\end{equation}
In both cases one can consider their $H$-decomposition $P(\mathbf{t})=\sum_{h\in H}P_h(\mathbf{t})$ and
$Z(\mathbf{t})=\sum_{h\in H}Z_h(\mathbf{t})$ as well.
The supports of $P(\tt)$ and $Z(\tt)$ are in the Lipman cone
$\cS'$ (for $P(\tt)$ see eg.~\cite[pp.~7-8]{Nem-PS} while for $Z(\tt)$ follows from the definition).
Since by~\ref{ss:HrepLip}(b)
for any $x\in L'$ the set
$\{\ell'\in \cS'\,:\, \ell'\not\geq x\}$ is finite, both $P(\tt)$ and $Z(\tt)$ satisfy the finiteness
condition~\eqref{eq:finiteness}, hence the corresponding counting functions are also well-defined, cf. ~\ref{ss:set}
(and the sum in (\ref{eq:hinv}) is also finite).

\subsubsection{\bf{Reduced versions}}
\label{ss:red}

Sometimes it is natural to restrict the variables of the series to a particular subset of the vertices of the resolution graph,
mostly supporting an intrinsic geometric meaning.

Following section~\ref{ss:set}, we fix a subset $\I\subset \V$ and the projection
 $L'\to L'_\I\subset \QQ\langle E_v\rangle_{v\in I}$.
Then for any $h\in H$ we define the
\textit{reduced topological Poincar\'e series} as $Z_h(\tt_{\I}):=Z_h(\tt)|_{t_v=1,v\notin \I}\in \ZZ[[L'_\I]]$.
(Recall that the reduction procedure must be considered
after the $H$-decomposition, since in general the $H$-decomposition of $Z(\tt_\I)$ is not well defined.)

Both the multivariable Hilbert series defined in~\eqref{eq:Hilbert} as well as the analytic Poincar\'e
series~\eqref{eq:Hilbert} can be reduced to $\I\subset\V$. Alternatively, one could also define a reduced
$L'_I$-divisorial filtration following~\eqref{eq:Ffiltration} as
\begin{equation}\label{eq:redfilt}
\mathcal{F}(\ell'_\I):=\{g\in \cO_{Y} \mid \div (g\circ \pi_Y)\geq \tilde c^* (\ell'_\I)\}.
\end{equation}
Then, for any $h\in H$ one could define an alternative Hilbert series 
whose $\tt^{\ell'_\I}$- coefficient $\mathfrak{h}_{h,I}(\ell'_\I)$
 is defined as the dimension of the $h$-eigenspace $(\cO_{Y}/\mathcal{F}(\ell'_\I))_{h}$.
Using relation~\eqref{eq:analPoincare} one could alternatively give a definition of a reduced analytic Poincar\'e series as well.
It was shown in
\cite[Theorem 6.1.7]{Nem-PS} that both approaches result in the same series $H_h(\tt_\I)$ and $P_h(\tt_\I)$.

\subsubsection{\bf{Surgery formulae}} \label{sec:surgform1}
Let us consider $\Gamma$ the dual graph of a good resolution of $(X,0)$ and fix a subset $\I\subset \V$ of its vertices.
The set of vertices $\V\setminus \I$ determines the connected full subgraphs $\{\Gamma_k\}_k$ with vertices
$\V(\Gamma_k)$, i.e. $\cup_k\V(\Gamma_k)=\V\setminus \I$. We associate with any $\Gamma_k$ the lattices
$L(\Gamma_k)$ and $L'(\Gamma_k)$ as well, endowed with the corresponding intersection forms.

Then for each $k$ one considers the inclusion operator $j_k:L(\Gamma_k)\to L(\Gamma)$,
$E_v(\Gamma_k)\mapsto E_v(\Gamma)$, identifying naturally the corresponding $E$-base elements associated with the two
graphs. This preserves the intersection forms.

Let $j_{k}^*:L'(\Gamma)\to L'(\Gamma_k)$ be the dual operator, defined by
$j_{k}^*(E^*_{v}(\Gamma))=E^*_{v}(\Gamma_k)$ if $v\in\V(\Gamma_k)$, and $j_{k}^*(E^*_{v}(\Gamma))=0$ otherwise.
Note that $j^*_{k}(E_v(\Gamma))=E_v(\Gamma_k)$ for any $v\in \V(\Gamma_k)$.
Then we have the projection formula
$(j^*_{k}(\ell'), \ell)_{\Gamma_k}=(\ell',j_{k}(\ell))_{\Gamma}$ for any $\ell'\in L'(\Gamma)$ and $\ell\in L(\Gamma_k)$,
which also implies that
\begin{equation}\label{eq:proj\ZK}
j^*_{k}(\ZK)=Z_{K}(\Gamma_k),
\end{equation}
where $\ZK$ (resp. $\ZK(\Gamma_k)$) is the canonical cycle associated with $\Gamma$ (resp. $\Gamma_k$).

Similarly, the next formula holds  for the corresponding minimal cycles $s_h$:

\begin{lemma}[\cite{LSzPoincare}]\label{lem:projs_h}
For any $h\in H$ one has $j^*_{k}(s_{h})=s_{[j^*_{k}(s_{h})]}\in L'(\Gamma_k)$.
\end{lemma}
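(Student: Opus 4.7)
The idea is to argue by contradiction: if $j^*_k(s_h)$ were not the minimal anti-nef representative of its class in $\cS'(\Gamma_k)$, we will subtract off the ``excess'' to produce a smaller anti-nef representative of $h$ in $\Gamma$, contradicting the minimality of $s_h$.

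\smallskip

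\textbf{Step 1: $j^*_k(s_h)\in \cS'(\Gamma_k)$.} For any $v\in V(\Gamma_k)$, the projection formula gives
\[
(j^*_k(s_h),E_v(\Gamma_k))_{\Gamma_k}=(s_h,j_k(E_v(\Gamma_k)))_\Gamma=(s_h,E_v(\Gamma))_\Gamma\le 0,
\]
since $s_h\in\cS'(\Gamma)$. So $j^*_k(s_h)$ lies in $\cS'(\Gamma_k)$, and it has a well-defined class $h':=[j^*_k(s_h)]\in H(\Gamma_k)$.

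\smallskip

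\textbf{Step 2: Producing a competitor.} Assume towards a contradiction that $j^*_k(s_h)\neq s_{h'}$. By property (a) of the Lipman cone applied to $\Gamma_k$, $\min\{s_{h'},j^*_k(s_h)\}\in \cS'(\Gamma_k)_{h'}$, hence by minimality of $s_{h'}$ we have $s_{h'}\le j^*_k(s_h)$, and by assumption the inequality is strict. Put
\[
a:=j^*_k(s_h)-s_{h'}\in L(\Gamma_k),\qquad a\ge 0,\ a\neq 0,
\]
and set $\tilde{s}:=s_h-j_k(a)\in L'(\Gamma)$. Since $j_k(a)\in L(\Gamma)$, we have $[\tilde{s}]=[s_h]=h$, and clearly $\tilde{s}<s_h$.

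\smallskip

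\textbf{Step 3: Checking $\tilde{s}\in \cS'(\Gamma)$.} For $v\in V(\Gamma_k)$, the projection formula and the fact that $j_k$ preserves the intersection form on elements supported on $V(\Gamma_k)$ yield
\[
(\tilde{s},E_v(\Gamma))_\Gamma=(j^*_k(s_h),E_v(\Gamma_k))_{\Gamma_k}-(a,E_v(\Gamma_k))_{\Gamma_k}=(s_{h'},E_v(\Gamma_k))_{\Gamma_k}\le 0.
\]
For $v\notin V(\Gamma_k)$, every $w\in V(\Gamma_k)$ satisfies $w\neq v$, so $(E_w(\Gamma),E_v(\Gamma))_\Gamma\ge 0$; since $a\ge 0$, this gives $(j_k(a),E_v(\Gamma))_\Gamma\ge 0$, and therefore
\[
(\tilde{s},E_v(\Gamma))_\Gamma=(s_h,E_v(\Gamma))_\Gamma-(j_k(a),E_v(\Gamma))_\Gamma\le 0.
\]
Hence $\tilde{s}\in \cS'(\Gamma)_h$ with $\tilde{s}<s_h$, contradicting the minimality of $s_h$. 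Thus $j^*_k(s_h)=s_{h'}$, as claimed.

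\smallskip

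\textbf{Main obstacle.} The only delicate point is the verification of the anti-nef property of $\tilde{s}$ at vertices $v\in I$ that neighbour $\Gamma_k$; the argument hinges on the standard fact that distinct exceptional components intersect non-negatively, which turns the ``correction'' $-j_k(a)$ into something that only makes $(\cdot,E_v)_\Gamma$ more negative. With that observation, the projection formula and the definition of $s_{h'}$ carry the rest.
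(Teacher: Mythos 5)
Your proof is correct. Since the paper cites this lemma from \cite{LSzPoincare} without reproducing the argument, there is no in-paper proof to compare against, but your argument is self-contained and sound: Step 1 uses the projection formula correctly to get $j^*_k(s_h)\in\cS'(\Gamma_k)$; Step 2 correctly invokes the minimality of $s_{h'}$ and the fact that $j^*_k(s_h)$ and $s_{h'}$ differ by an element of $L(\Gamma_k)$ to produce the nonzero effective correction $a$; and Step 3 handles the two cases cleanly, with the crucial (and correctly identified) observation being that for $v\notin V(\Gamma_k)$ one has $(j_k(a),E_v(\Gamma))_\Gamma\ge 0$ because distinct exceptional components meet non-negatively and $a\ge 0$, so subtracting $j_k(a)$ preserves anti-nefness at those vertices. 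This contradicts minimality of $s_h$ and proves the claim.
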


Moreover, if $Q^\Gamma$ and $Q^{\Gamma_k}$ denote the counting functions associated with the topological Poincar\'e
series of the corresponding graphs, then one has the following surgery formula.

\begin{thm}[{\cite[Theorem 3.2.2]{LNN}}]
\label{thm:surgform}
For any $\ell'=\sum_va_vE^*_v$ with $a_v\gg 0$ and with the notation $[\ell']=h$
 one has the identity
\begin{equation}\label{eq:surgform}
Q^\Gamma_{h}\,(\ell')=Q^\Gamma_{h,\I}\,(\ell')+\sum_k \
Q^{\Gamma_k}_{[j^*_k(\ell')]}(j^*_k(\ell')).
\end{equation}
\end{thm}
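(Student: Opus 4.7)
The plan is to prove the identity by unraveling both sides at the level of coefficients of $Z^\Gamma(\tt)$ and matching the summation regions. By the definitions of the counting functions, one has
\begin{equation*}
Q^\Gamma_h(\ell')-Q^\Gamma_{h,\I}(\ell')=\sum_{\ell''}z(\ell''),
\end{equation*}
where the sum runs over $\ell''\in L'$ with $[\ell'']=h$, $\ell''|_\I\geq \ell'|_\I$ and $\ell''\ngeq \ell'$. The last two conditions force $\ell''|_{\V\setminus \I}\ngeq \ell'|_{\V\setminus \I}$. Since $\V\setminus\I=\sqcup_k \V(\Gamma_k)$ is a disjoint union, the failure of the inequality is localized on at least one component $\Gamma_k$. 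The task is therefore to show that this ``boundary'' contribution equals $\sum_k Q^{\Gamma_k}_{[j^*_k(\ell')]}(j^*_k(\ell'))$.

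The core step is to establish a surgery/factorization for $Z^\Gamma(\tt)$ relative to the splitting $\V=\I\sqcup(\V\setminus\I)$. When $\ell''|_\I\geq \ell'|_\I$ (which is non-vacuous thanks to $a_v\gg 0$), the coefficient $z^\Gamma(\ell'')$ is governed by the behavior of the product $Z^\Gamma(\tt)=\prod_v(1-\tt^{E^*_v(\Gamma)})^{\mathrm{val}(v)-2}$ in a cone where the $\I$-coordinates are ``frozen'' to large values. The two key identities are the projection formula $j^*_k(E^*_v(\Gamma))=E^*_v(\Gamma_k)$ for $v\in \V(\Gamma_k)$ (and $=0$ otherwise), together with the valence correction $\mathrm{val}_{\Gamma_k}(v)=\mathrm{val}_\Gamma(v)-|\{u\in \I:u\sim v\text{ in }\Gamma\}|$ for $v\in \V(\Gamma_k)$. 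These allow one to identify, within the region $\ell''|_\I\geq \ell'|_\I$, the $\Gamma_k$-part of $z^\Gamma(\ell'')$ with $z^{\Gamma_k}(j^*_k(\ell''))$, up to correction terms which become negligible (or produce a telescoping) once the $\I$-coordinates are pushed asymptotically. The $H$-equivariance is transported through the projection $h\mapsto [j^*_k(\ell'')]\in H(\Gamma_k)$, and the compatibility $j^*_k(\ZK)=Z_K(\Gamma_k)$ together with Lemma~\ref{lem:projs_h} ensures the Riemann--Roch structure is preserved component-wise. Summing over $k$ (combined with inclusion--exclusion to avoid over-counting $\ell''$ with failure on several $\Gamma_k$ at once) matches precisely $\sum_k Q^{\Gamma_k}_{[j^*_k(\ell')]}(j^*_k(\ell'))$.

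The principal obstacle is the factorization in the second step: while the combinatorial decomposition of the summation region is clean, the algebraic identification of $z^\Gamma(\ell'')$ with $\prod_k z^{\Gamma_k}(j^*_k(\ell''))$ is delicate because the exponents $E^*_v(\Gamma)$ and $E^*_v(\Gamma_k)$ genuinely differ, and the valence in $\Gamma$ encodes additional edges to $\I$. A natural strategy is induction on $|\I|$, reducing to the base case $|\I|=1$: removing a single vertex yields an explicit comparison between $Z^\Gamma$ and the $Z^{\Gamma_k}$'s, with all corrections absorbed into a single factor supported on the variable dual to that vertex. The inductive step then uses that removing one further vertex from each $\Gamma_k$ at a time is compatible with projections $j^*_k$ and with the $H\to H(\Gamma_k)$ maps, so the formula extends component-by-component.
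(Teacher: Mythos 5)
This theorem is cited from \cite{LNN} (Theorem~3.2.2) and is not proved in the present paper, so there is no in-text argument to compare against; I therefore assess your proposal on its own merits.

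Your first step is correct: by the definitions in \ref{ss:countingfunctions}, the difference $Q^\Gamma_h(\ell')-Q^\Gamma_{h,\I}(\ell')$ is the sum of $z^\Gamma(\ell'')$ over the ``boundary'' region $B=\{\ell'':[\ell'']=h,\ \ell''|_\I\geq\ell'|_\I,\ \ell''\ngeq\ell'\}$, and since $\V\setminus\I=\sqcup_k\V(\Gamma_k)$ the failure of $\ell''\geq\ell'$ is detected on at least one $\Gamma_k$. But the crucial step --- identifying $\sum_{\ell''\in B}z^\Gamma(\ell'')$ with $\sum_k Q^{\Gamma_k}_{[j^*_k\ell']}(j^*_k\ell')$ --- is exactly where the proposal stops being a proof. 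You state the obstacle yourself (the exponents $E^*_v(\Gamma)$ versus $E^*_v(\Gamma_k)$ genuinely differ, and the valences differ), but the claim that the corrections ``become negligible or produce a telescoping'' is not substantiated. In fact, $z^\Gamma(\ell'')$ cannot factor as $\prod_k z^{\Gamma_k}(j^*_k\ell'')$ at the level of individual coefficients, since each $E^*_v(\Gamma)$ has full support in $\V$ and the zeta function of $\Gamma$ does not decompose as a product indexed by the pieces $\Gamma_k$ after freezing the $\I$-variables. The hypothesis $a_v\gg 0$ signals that the identity is asymptotic and must be handled at the level of quasi-polynomials and periodic constants (Proposition~\ref{prop:LSz}, formula~\eqref{eq:SUMQ} and its analogues), not by a termwise matching of coefficients.

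A second concrete problem is the inclusion-exclusion you invoke to handle $\ell''$ whose restriction fails on several $\Gamma_k$ simultaneously. The right-hand side of \eqref{eq:surgform} is a plain sum over $k$, with no alternating cross-terms. You offer no argument for why the higher intersection contributions from $B_k\cap B_{k'}$ cancel, and there is no obvious reason they should at the level of raw coefficients; this is another sign that the identity is only a statement about the regularized (quasi-polynomial) part and must be proved in that framework. Finally, the ``induction on $|\I|$'' is left entirely schematic; even the base case $|\I|=1$ is not attempted, and it is not clear the factorization issue improves for small $\I$. In short, the reduction to the boundary region is a sound observation, but the heart of the theorem --- comparing a global counting function of $\Gamma$ with counting functions of the subgraphs $\Gamma_k$ through the dual projections $j^*_k$, using \eqref{eq:proj\ZK} and Lemma~\ref{lem:projs_h}, and the asymptotic polynomial structure from Ehrhart theory --- is missing.
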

More details regarding the above setting can be found in~\cite{LNN}.
\subsubsection{\bf Geometric genera, Seiberg-Witten invariants and the CDGZ-identity}
\label{sec:pg}
By the next package of properties of these series we show that
they encode important analytical, respectively topological, invariants of the analytic type,
resp. of the link.

As a consequence of the inversion identity~\eqref{eq:hinv}, the counting function for $P_h(\tt)$ is the
Hilbert function $\mathfrak{h}_h(x)$. In particular, in the sequel we will use the notation
 $\mathfrak{h}_h(x)$ for $Q^{(P)}_h(x)$.
 Therefore, in the sequel we will use the simplified notation $Q_h(x)=Q^{(Z)}_h(x)$
for the topological counting function of~$Z_h(\tt)$, if there is no danger of ambiguity.

Using~\eqref{eq:hell} and the Grauert-Riemenschneider vanishing
(see also~\cite[Prop. 3.2.4]{Nem-CLB}) we obtain that for any $\ell'=r_h+\ell\in \ZK+\cS'$
(where $\ell\in L$ and $h=[\ell']$) the Hilbert function can be written in the form
\begin{equation}\label{eq:SUMh}
\mathfrak{h}(\ell')=\chi(\ell')-\chi(r_h)+h^1(\tilde X, \cO_{\tilde X}(-r_h)).
\end{equation}
The right hand side is a multivariable polynomial in $\ell\in L$ by the Riemann-Roch formula, cf.~\ref{sec:lattice}.
This means that $P_{h}(\tt)$ admits a (quasi-)polynomial in $\cS'_{\RR}:=\cS'\otimes\RR$,
hence by~\eqref{eq:PCDEF} its periodic constant is
$\mathrm{pc}^{\cS'_{\RR}}(P_h(\tt))=h^1(\tilde X, \cO_{\tilde X}(-r_h))$. Recall that
 $\{h^1(\tilde X, \cO_{\tilde X}(-r_h))\}_h$ are the equivariant geometric genera of~$(X,0)$,
the $H$--decomposition of the geometric genus of the universal abelian covering.

Similarly, in the case of the topological series we have an analogous identity too:  the only difference is that
the equivariant genera are replaced by the Seiberg-Witten invariants of the link. More precisely,~\cite{NJEMS}
shows that for any $\ell'\in \ZK+ int(\cS')$ (where $int(\cS')=\ZZ_{>0}\langle E^*_v\rangle_{v\in \V}$) the counting 
function of $Z_h(\tt)$ has the form
\begin{equation}\label{eq:SUMQ}
{Q}_{h}(\ell')=
\chi(\ell')-\chi(r_h)+\mathfrak{sw}_h^{\text{norm}}(\Sigma),
\end{equation}
where $\mathfrak{sw}_h^{\text{norm}}(\Sigma)$ denotes the {\it $r_h$-normalized Seiberg-Witten invariant} of the link
$\Sigma$ for $h\in H$. Thus $Z_h(\tt)$ admits the (quasi-)polynomial
\begin{equation}\label{eq:SUMQP}
\qp{Q}_{h}(\ell)=\chi(\ell+r_h)-\chi(r_h)
+\mathfrak{sw}_h^{\text{norm}}(\Sigma),
\end{equation}
in the cone $\cS'_{\mathbb{R}}$ and $\mathrm{pc}^{\cS'_{\RR}}(Z_h(\tt))=\mathfrak{sw}_h^{\text{norm}}(\Sigma)$.
For more details about the Seiberg-Witten invariants of links of singularities we refer the reader
to~\cite{NemOSZ,NJEMS,LNehrhart,LNN}.

The above surprising similarity was conceptually formulated under the name \textit{Seiberg-Witten Invariant Conjecture}
by N\'emethi and Nicolaescu~\cite{NN-SWI}, which says that for \textit{sufficiently nice} normal surface
singularities with rational homology sphere link, the equivariant geometric genera are topological, and they are
given by the Seiberg-Witten invariant of the link. For several cases, eg. for rational singularities, the conjecture is true, see~\cite{NemOSZ,Nem-CLB,Nem-ICM}.

The conjecture can be raised to the level of the series as well. This is the subject of the \textit{CDGZ-identity},
named after Campillo, Delgado and Gusein-Zade thanks to their contributions in~\cite{CDG-Poincare} and~\cite{CDGZ-PSuniversalAC}.
Namely, they proved the following:
\begin{thm}[CDGZ-identity \cite{CDG-Poincare}]\label{thm:CDGZ}
A rational surface singularity $(X,0)$ and any of its resolution $\pi$ satisfy the identity
\begin{equation}\label{CDGZ}
P(\tt)=Z(\tt).
\end{equation}
\end{thm}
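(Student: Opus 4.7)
The plan is to compare the counting functions of $P(\tt)$ and $Z(\tt)$ on a large region of the Lipman cone, using the cohomological vanishings that characterize rational singularities, and then promote the numerical agreement to an identity of series.

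Both $P(\tt)$ and $Z(\tt)$ are supported in $\cS'$ and satisfy the finiteness condition \eqref{eq:finiteness}, and both admit $H$-equivariant decompositions $P=\sum_{h} P_h$ and $Z=\sum_{h} Z_h$. Hence it suffices to show $P_h(\tt)=Z_h(\tt)$ for each $h\in H$. By the inversion identity \eqref{eq:hinv}, the counting function of $P_h$ coincides with the equivariant Hilbert function: $Q^{(P)}_h(\ell')=\mathfrak{h}(\ell')$ on $L'_h$. Rationality of $(X,0)$ forces $h^1(\tilde X,\cO_{\tilde X}(-\ell'))=0$ for every $\ell'\in L'$ (via \eqref{eq:sell'} applied to $s(\ell')\in \cS'$, combined with Artin's characterization of rational singularities). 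Substituting into \eqref{eq:hell} gives
\[
Q^{(P)}_h(\ell')=\chi(\ell')-\chi(r_h)
\]
on the cone $\ell'\in r_h+L_{>0}$. On the topological side, by \eqref{eq:SUMQ} and the Seiberg--Witten Invariant Conjecture for rational singularities (cf.~\cite{NemOSZ,Nem-CLB}), which forces $\mathfrak{sw}_h^{\mathrm{norm}}(\Sigma)=0$, we get
\[
Q^{(Z)}_h(\ell')=\chi(\ell')-\chi(r_h)
\]
on $\ZK+\mathrm{int}(\cS')$.

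Hence the two counting functions, together with the associated quasi-polynomials on $\cS'_\RR$, agree on a cofinite region of $L'_h$. To promote this to $P_h=Z_h$ I would use that both series arise as Taylor expansions of rational functions of $\tt$ whose denominators are products $\prod_v(1-\tt^{E_v^*})^{n_v}$: for $Z$ this is immediate from the definition as the expansion of $f(\tt)$, while for $P$ it follows from the structure of $H(\tt)$ as the equivariant Hilbert series of the graded semigroup algebra on the universal abelian cover $Y$. Rational functions of this prescribed form, together with support in $\cS'_h$, are determined by the eventual (quasi-polynomial) behavior of their Taylor coefficients. Since both $P_h$ and $Z_h$ exhibit the same quasi-polynomial growth $\chi(\ell')-\chi(r_h)$ on a cofinite cone, they must coincide; summing over $h\in H$ yields the claimed identity.

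The main obstacle lies in the final promotion step: numerical agreement of counting functions on a cofinite cone does not by itself force equality of formal power series. The resolution exploits the rational-function shape of both sides together with the support constraints, which reduces the problem to matching finitely many initial coefficients. These can in turn be verified by an induction on the complexity of the graph via the surgery formula \eqref{eq:surgform} (both $P$ and $Z$ enjoy compatible surgery behavior for rational graphs), with base cases handled by a direct computation at leaves/nodes of $\Gamma$.
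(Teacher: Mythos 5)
First, note that the paper does not prove Theorem~\ref{thm:CDGZ} at all: it is quoted from \cite{CDG-Poincare}, so your argument has to be measured as an independent proof, and as such it has genuine gaps. Two of your key vanishing claims are false as stated. Rationality does \emph{not} force $h^1(\tilde X,\cO_{\tilde X}(-\ell'))=0$ for every $\ell'\in L'$: Lipman's vanishing gives this only on $\cS'$, and \eqref{eq:sell'} yields $h^1(\cO_{\tilde X}(-\ell'))=\chi(s(\ell'))-\chi(\ell')$, which is typically nonzero (cf.~\eqref{eq:rhsh} and Corollary~\ref{cor:sharp}, where $h^1(\cO_{\tilde X}(-r_h))=\chi(r_h)-\chi(s_h)$). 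Likewise $\mathfrak{sw}_h^{\text{norm}}(\Sigma)=0$ fails for $h\neq 0$: the Seiberg--Witten Invariant Conjecture identifies it with the equivariant geometric genus $h^1(\cO_{\tilde X}(-r_h))$ of the universal abelian cover, which need not vanish for a rational $(X,0)$ (the UAC of a rational singularity need not be rational). The correct asymptotic formulas are \eqref{eq:SUMh} and \eqref{eq:SUMQ}, each carrying these constants, so the asymptotic agreement you want is exactly the equivariant SWIC for rational singularities --- a deep theorem whose available proofs (\cite{NemOSZ,Nem-CLB}) are intertwined with the CDGZ identity itself, so invoking it here risks circularity rather than giving an independent argument.

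The decisive gap, which you partly acknowledge, is the promotion step, and your proposed fix does not close it. Even granting that $P_h$ and $Z_h$ are Taylor expansions of rational functions with denominator $\prod_v(1-\tt^{E_v^*})^{n_v}$ supported in $\cS'_h$ (for $P$ this structural claim is itself unproved here --- the divisorial multifiltration on $\cO_Y$ is not obviously of that shape, and in practice this is known only \emph{after} identities of CDGZ type), agreement of the counting functions for $\ell'$ deep in the cone only shows that the difference $P_h-Z_h$ is a finitely supported series whose coefficient sum is zero; it does not determine, let alone kill, the finitely many ``small'' coefficients, since eventual quasi-polynomial behaviour never sees them. Your appeal to an induction via the surgery formula \eqref{eq:surgform} cannot repair this: that formula concerns the topological counting function $Q^{\Gamma}$ at large arguments only, and no compatible surgery statement for the analytic series $P$ is available without essentially reproving the theorem of \cite{CDG-Poincare}. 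So the proposal reduces the statement to exactly the finitely many coefficients where all the content of the CDGZ identity lives, and leaves them unverified.
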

The fourth author in~\cite{Nem-PS},~\cite{Nem-CLB} extended the result for larger
families of normal surface singularities with rational homology sphere link.
The largest family for which the CDGZ-identity holds is the family of
splice-quotient singularities with their special resolutions satisfying the end-curve conditions, cf.~\cite{Nem-ICM}.

\begin{cor}\label{cor:sharp} If $(X,0)$ is rational then the following facts hold.
\begin{enumerate}[leftmargin=1cm,itemindent=0cm,labelwidth=\itemindent,labelsep=.2cm,align=left,label={$(\alph*)$}]
 \item\label{cor:sharp1} 
The quasi-polynomials of the analytic and of the topological Poincar\'e series coincide:
for $\ell'$ with sufficiently large $E^*$-coefficients one has
$$\mathfrak{h}_h(\ell')=
\chi(\ell')-\chi(r_h)+h^1(\tilde X, \cO_{\tilde X}(-r_h))=
\chi(\ell')-\chi(r_h)+\mathfrak{sw}_h^{\text{norm}}(\Sigma)=\widetilde{Q}_h(\ell').$$
This combined with~\eqref{eq:rhsh} and with the Lipman's vanishing $h^1(\cO_{{\tilde X}}(-s_h))=0$ \cite{Lipman} gives
$$h^1(\tilde X, \cO_{\tilde X}(-r_h))=
\mathfrak{sw}_h^{\text{norm}}(\Sigma)= \chi(r_h)-\chi(s_h).$$
 \item\label{cor:sharp2}
For any $\ell'=\ell+r_h\in Z_K+\mathcal{S}'$ one has $\widetilde{Q}_h(\ell)=Q_h(\ell+r_h)$.
\end{enumerate}
\end{cor}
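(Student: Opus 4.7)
The plan is to assemble the statement from three ingredients already in place: the analytic formula \eqref{eq:SUMh}, the topological formula \eqref{eq:SUMQP}, and the CDGZ-identity (Theorem~\ref{thm:CDGZ}), and then to invoke \eqref{eq:rhsh} together with Lipman's vanishing for the last equality.

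First, for part~\ref{cor:sharp1}, I would start from \eqref{eq:SUMh}, which gives
$\mathfrak{h}(\ell')=\chi(\ell')-\chi(r_h)+h^1(\tilde X, \cO_{\tilde X}(-r_h))$
whenever $\ell'=r_h+\ell\in \ZK+\cS'$ (this uses Grauert--Riemenschneider and is valid in any normal surface singularity with \RHS\ link). On the topological side, \eqref{eq:SUMQ}--\eqref{eq:SUMQP} provide the quasi-polynomial
$\qp Q_h(\ell')=\chi(\ell')-\chi(r_h)+\mathfrak{sw}_h^{\text{norm}}(\Sigma)$,
which agrees with $Q_h(\ell')$ on $\ZK+\mathrm{int}(\cS')$. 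Under the rationality assumption, the CDGZ-identity $P(\tt)=Z(\tt)$ gives $\mathfrak{h}_h(\ell')=Q_h(\ell')$ for every $\ell'$. In particular, on the region $\ZK+\mathrm{int}(\cS')$ the two quasi-polynomial expressions must coincide, so their constant parts agree, giving
$$h^1(\tilde X, \cO_{\tilde X}(-r_h))=\mathfrak{sw}_h^{\text{norm}}(\Sigma).$$
This simultaneously yields the displayed chain of equalities and the identification $\mathfrak{h}_h(\ell')=\qp Q_h(\ell')$ for $\ell'$ with sufficiently large $E^*$-coefficients.

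For the remaining equality $h^1(\tilde X, \cO_{\tilde X}(-r_h))=\chi(r_h)-\chi(s_h)$, I would apply \eqref{eq:rhsh}, namely
$h^1(\cO_{\tilde X}(-r_h))-\chi(r_h)=h^1(\cO_{\tilde X}(-s_h))-\chi(s_h)$,
together with Lipman's vanishing \cite{Lipman} for rational singularities, which asserts $h^1(\cO_{\tilde X}(-s_h))=0$ since $s_h$ is anti-nef. Rearranging produces the claim.

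For part~\ref{cor:sharp2}, this is a direct consequence of the definition of the quasi-polynomial in \eqref{eq:qpol} applied to $Z_h(\tt)$: the quasi-polynomial $\qp Q_h$ on $\cS'_\RR$ coincides with $Q_h(\,\cdot\,+r_h)$ on any point of the stability region, and by part~\ref{cor:sharp1} this region contains $\ZK+\cS'-r_h$, so for $\ell$ with $\ell+r_h\in \ZK+\cS'$ one has $\qp Q_h(\ell)=Q_h(\ell+r_h)$. The main subtle point is to ensure that the equality of counting functions from $P=Z$ can be upgraded to an equality of the quasi-polynomials on the whole relevant cone; this follows from the fact that a quasi-polynomial is determined by its values on a sublattice of finite index intersected with a translate of $\cS'_\RR$, and both $\mathfrak{h}_h$ and $Q_h$ are polynomial on such a set, hence forced to be the same polynomial.
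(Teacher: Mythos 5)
Your argument is correct and follows essentially the same route as the paper: part (a) is obtained by comparing \eqref{eq:SUMh} with \eqref{eq:SUMQ}--\eqref{eq:SUMQP} via the CDGZ-identity (Theorem~\ref{thm:CDGZ}), then invoking \eqref{eq:rhsh} and Lipman's vanishing, while part (b) rests on the validity of \eqref{eq:SUMh} on all of $Z_K+\mathcal{S}'$. Only note that the stability region containing $Z_K+\mathcal{S}'-r_h$ is not literally a consequence of part (a) as stated (which concerns only cycles with large $E^*$-coefficients), but of the chain $Q_h=\mathfrak{h}_h$ (from $P(\tt)=Z(\tt)$) combined with \eqref{eq:SUMh} on the whole cone $Z_K+\mathcal{S}'$ and the identification $h^1(\cO_{\tilde X}(-r_h))=\mathfrak{sw}_h^{\text{norm}}(\Sigma)$ --- which is exactly the justification you supply in your closing remark, so no genuine gap remains.
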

\begin{proof} 
Part~\ref{cor:sharp1} follows from the validity of the Seiberg-Witten Invariant Conjecture for rational singularities
 (or from Theorem~\ref{thm:CDGZ}) and the identities~\eqref{eq:SUMh} and~\eqref{eq:SUMQ}. Part~\ref{cor:sharp2} follows from~\eqref{eq:SUMh}
valid for any $\ell'\in Z_K+\mathcal{S}'$.
\end{proof}

\subsection{The relative theory}

\subsubsection{\bf Poincar\'e series for curves}\label{sec:relabsP}
Let $(C,0)$ be an (abstract) reduced curve germ and let $(C,0)=\bigcup_{i\in \I} (C_i,0)$ be its irreducible decomposition with
finite index set $\I$. Let $\gamma_i:(\mathbb{C},0)\to (C_i,0)$, $t\mapsto \gamma_i(t)$, be a normalization of the branch
$(C_i,0)$ and for any germ $g\in \mathcal{O}_{C,0}$ we consider its value $v(g):=(v_i(g))_{i\in \I}$ defined by
$v_i(g):=\ord_{t}(g(\gamma_i(t)))$ for any $i\in \I$. For any $\ell\in \ZZ^{|\I|}$ one associates the ideal
$\mathcal{F}_C(\ell):=\{g\in \mathcal{O}_{C,0} \mid v(g)\geq \ell\}$ and the multivariable
 Hilbert series $H_C(\tt_\I):=\sum_{\ell}\mathfrak{h}_C(\ell) \tt_\I^\ell$, where the Hilbert function is defined by
$\mathfrak{h}_C(\ell):=\dim_{\CC} \mathcal{O}_{C,0}/\mathcal{F}_C(\ell)$.
The Poincar\'e series associated with $(C,0)$ is defined as $P_C(\tt_\I):=-H_C(\tt_\I)\cdot\prod_{i\in \I}(1-t_i^{-1})$.
Thus, if we write $P_C(\tt_\I)=\sum_\ell \mathfrak{p}_C(\ell)\tt_\I^\ell$ then its coefficients can be expressed
by the Hilbert functions as
\begin{equation}\label{eq:coefC}
\mathfrak{p}_C(\ell):=\sum_{J\subset \I}(-1)^{|J|+1} \mathfrak{h}_C(\ell+1_J).
\end{equation}

Moreover, Lemma \ref{lem:hilbinc} and~\eqref{eq:coefC} imply the following property.
\begin{lemma}[cf.~\cite{CDK}]\label{lem:Ppoly}
For $|\I|\geq 2$ the Poincar\'e series $P_C(\tt_\I)$ is a polynomial.
\end{lemma}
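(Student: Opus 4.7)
The plan is to show that $\mathfrak{p}_C(\ell)$ vanishes for $\ell$ outside a bounded box of $\ZZ^{|\I|}$. Since $v_i(g)\geq 0$ on $\mathcal{O}_{C,0}$, we have $\mathcal{F}_C(\ell)=\mathcal{F}_C(\max(\ell,0))$, and hence $\mathfrak{h}_C(\ell)=\mathfrak{h}_C(\max(\ell,0))$ for every $\ell\in\ZZ^{|\I|}$. Using the factorization $\prod_i(E_i-1)$ with $E_i$ the shift $\ell\mapsto \ell+1_i$, I first rewrite \eqref{eq:coefC} as the mixed finite difference
\[
\mathfrak{p}_C(\ell)=\sum_{J\subseteq \I}(-1)^{|J|+1}\mathfrak{h}_C(\ell+1_J)=(-1)^{|\I|+1}\prod_{i\in \I}\Delta_i\,\mathfrak{h}_C(\ell),
\]
where $\Delta_i f(\ell):=f(\ell+1_i)-f(\ell)$. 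Two complementary vanishing statements then suffice.

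\emph{Small coordinates.} If $l_i\leq -1$ for some $i$, then $(\ell+1_J)_i\in\{l_i,l_i+1\}\leq 0$ for every $J\subseteq\I$, so maxing with $0$ erases the $i$-th dependence and $\mathfrak{h}_C(\ell+1_J)=\mathfrak{h}_C(\ell+1_{J\triangle\{i\}})$. Pairing $J$ with $J\triangle\{i\}$, which have opposite parity, cancels the alternating sum and yields $\mathfrak{p}_C(\ell)=0$.

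\emph{Large coordinates.} The main input is the conductor ideal $\mathfrak{c}$ of $\mathcal{O}_{C,0}$ in its normalization $\overline{\mathcal{O}}_{C,0}=\prod_{i\in\I}\CC\{t_i\}$. As an ideal of a product of DVRs, $\mathfrak{c}$ factors as $\prod_i t_i^{c_i}\CC\{t_i\}$ for suitable $c_i\in\ZZ_{>0}$. If $l_i\geq c_i$, the element $e_it_i^{l_i}\in\mathfrak{c}\subset\mathcal{O}_{C,0}$ (with $e_i$ the $i$-th idempotent of $\overline{\mathcal{O}}_{C,0}$) has $v_i=l_i$ and $v_k=\infty$ for $k\neq i$; combined with a standard leading-coefficient argument over $\CC$ (to bound the relevant quotient by $1$) this yields
\[
\Delta_i\mathfrak{h}_C(\ell')=\dim_{\CC}\mathcal{F}_C(\ell')/\mathcal{F}_C(\ell'+1_i)=1 \qquad\text{whenever }l'_i\geq c_i,
\]
independently of the remaining coordinates. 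Factoring $\prod_{k\in\I}\Delta_k=\bigl(\prod_{k\neq i}\Delta_k\bigr)\circ\Delta_i$ and noting that $(\ell+1_K)_i=l_i\geq c_i$ for every $K\subseteq\I\setminus\{i\}$, the mixed difference telescopes to
\[
\prod_{k\in\I}\Delta_k\,\mathfrak{h}_C(\ell)=\sum_{K\subseteq\I\setminus\{i\}}(-1)^{|\I|-1-|K|}\cdot 1=(1-1)^{|\I|-1},
\]
which vanishes precisely because $|\I|\geq 2$. Hence $\mathfrak{p}_C(\ell)=0$.

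Combining the two statements confines the support of $\mathfrak{p}_C$ to the bounded box $\prod_i[0,c_i-1]$, so $P_C(\tt_\I)$ is a polynomial. The delicate point is the mixed regime in which some $l_i$ are past their conductor threshold while others are small; there $\mathfrak{h}_C$ is not linear in $\ell$, so a closed-form substitution like $\mathfrak{h}_C(\ell)=\sum_i l_i-\delta(C)$ (valid only past the full conductor) would fail. What rescues the argument is the coordinate-wise stabilization $\Delta_i\mathfrak{h}_C(\ell')=1$ as soon as $l'_i\geq c_i$, independently of the other coordinates, which is precisely what makes the factorization of the mixed difference collapse for $|\I|\geq 2$.
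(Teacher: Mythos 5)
Your proof is correct and follows essentially the same route the paper indicates: exhibit $\mathfrak{p}_C(\ell)$ as a mixed finite difference of $\mathfrak{h}_C$ and show it vanishes outside a bounded box determined by the conductor. The paper attributes the key input to~\cite{CDK} via Lemma~\ref{lem:hilbinc} (the value-semigroup characterization of when $\mathfrak{h}_C(\ell+1_i)-\mathfrak{h}_C(\ell)=1$), while you derive the needed weaker statement --- that $\Delta_i\mathfrak{h}_C(\ell')=1$ once $\ell'_i\geq c_i$, uniformly in the other coordinates --- directly from the conductor ideal $\mathfrak{c}=\prod_i t_i^{c_i}\CC\{t_i\}$ and the idempotents of the normalization; together with the standard one-step bound $\Delta_i\mathfrak{h}_C\leq 1$ this is equivalent in substance. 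Your explicit treatment of the negative-coordinate regime (via $\mathcal{F}_C(\ell)=\mathcal{F}_C(\max(\ell,0))$ and the $J\mapsto J\triangle\{i\}$ pairing) and the observation that the telescoping $\prod_{k\neq i}\Delta_k$ of a constant vanishes only when $|\I|\geq 2$ make transparent exactly where the hypothesis $|\I|\geq 2$ enters, which the paper leaves to the reader.
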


Let $S_C$ be the value semigroup with its conductor $c\in \NN^{|\I|}$, and we use the notation $1_J=\sum_j 1_j$ where
$1_j$ is the $j$-th vector of the canonical basis of $\ZZ^{|\I|}$ and, for convenience $1_J=0$ if $J=\emptyset$.
 Then the Hilbert function satisfies the following useful properties:
\begin{lemma}[{\cite[Lemma 3.4]{CDK}}, see also \cite{Moy}]\label{lem:hilbinc} For any $i\in I$ fixed,
$\mathfrak{h}_C(\ell+1_i)=h(\ell)+1$ if and only if there exists $s\in S_C$ with $s_i=\ell_i$ and $s_j\geq \ell_j$ for each $j$.
\end{lemma}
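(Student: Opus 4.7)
The plan is to relate the Hilbert jump $\mathfrak{h}_C(\ell+1_i)-\mathfrak{h}_C(\ell)$ to the dimension of a single quotient space, show that this dimension is always at most one via a leading-coefficient cancellation on the $i$-th branch, and then identify the case of dimension one with the existence of the advertised semigroup element.

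First, from the inclusion $\mathcal{F}_C(\ell+1_i)\subset \mathcal{F}_C(\ell)$ I would write the short exact sequence
$$0\longrightarrow \mathcal{F}_C(\ell)/\mathcal{F}_C(\ell+1_i)\longrightarrow \mathcal{O}_{C,0}/\mathcal{F}_C(\ell+1_i)\longrightarrow \mathcal{O}_{C,0}/\mathcal{F}_C(\ell)\longrightarrow 0,$$
yielding $\mathfrak{h}_C(\ell+1_i)-\mathfrak{h}_C(\ell)=\dim_{\CC}\mathcal{F}_C(\ell)/\mathcal{F}_C(\ell+1_i)$. The task reduces to computing this single nonnegative integer.

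Second, I would bound this dimension by $1$. Suppose $g_1,g_2\in \mathcal{F}_C(\ell)\setminus \mathcal{F}_C(\ell+1_i)$. Then $v_j(g_k)\geq \ell_j$ for every $j\in I$ and $v_i(g_k)=\ell_i$ for $k=1,2$. Writing $g_k(\gamma_i(t))=a_k t^{\ell_i}+(\text{higher order in }t)$ with $a_k\in\CC^{*}$, the combination $a_2 g_1-a_1 g_2$ still lies in $\mathcal{F}_C(\ell)$ but satisfies $v_i(a_2g_1-a_1g_2)\geq \ell_i+1$, hence belongs to $\mathcal{F}_C(\ell+1_i)$. This produces a linear dependence modulo $\mathcal{F}_C(\ell+1_i)$, so the quotient has dimension at most one. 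Consequently $\mathfrak{h}_C(\ell+1_i)\in\{\mathfrak{h}_C(\ell),\mathfrak{h}_C(\ell)+1\}$, and equality with $\mathfrak{h}_C(\ell)+1$ is equivalent to the existence of some $g\in \mathcal{O}_{C,0}$ with $v_j(g)\geq \ell_j$ for all $j$ together with $v_i(g)=\ell_i$. Setting $s:=v(g)\in S_C$ yields a semigroup element with $s_i=\ell_i$ and $s_j\geq \ell_j$; conversely, any such $s\in S_C$ is realized as $v(g)$ for some $g\in \mathcal{O}_{C,0}$, and such $g$ represents a nontrivial class in $\mathcal{F}_C(\ell)/\mathcal{F}_C(\ell+1_i)$.

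The main subtlety, rather than a genuine obstacle, lies in the second step: the leading-coefficient cancellation works precisely because the increment $1_i$ affects only one branch, so only a single scalar degree of freedom needs to be killed. Were one to increment two coordinates simultaneously, the quotient could exceed one dimension and the biconditional would require a more delicate semigroup-theoretic criterion; this is why the lemma is phrased branchwise.
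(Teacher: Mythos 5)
Your strategy is the standard one: reduce to computing $\dim_{\CC}\mathcal{F}_C(\ell)/\mathcal{F}_C(\ell+1_i)$ via the short exact sequence, show this is at most $1$ by the leading-coefficient cancellation on the $i$-th branch (which works precisely because only $v_i$ was incremented, and only to $\ell_i+1$), and then identify the one-dimensional case with the existence of the advertised $s\in S_C$. The first two steps are carried out correctly and cleanly. The paper itself cites this to~\cite{CDK} without proof, so I compare against the statement itself.

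There is, however, a small but genuine gap in the forward direction. From the existence of $g\in\mathcal{F}_C(\ell)\setminus\mathcal{F}_C(\ell+1_i)$ you write ``Setting $s:=v(g)\in S_C$,'' but this presumes $g$ is a non-zero-divisor of $\mathcal{O}_{C,0}$. The constraints you have established only force $v_i(g)=\ell_i<\infty$, so $g$ does not vanish identically on the branch $C_i$, but nothing prevents $g$ from vanishing on some other branch $C_j$. In that case $v_j(g)=\infty$ and $v(g)\notin S_C\subset\NN^{|I|}$, so your $s$ is not a legitimate semigroup element. The repair is short and should be stated: inside $\mathcal{F}_C(\ell)$, the zero divisors form the finite union of the subspaces $\ker(\mathcal{O}_{C,0}\to\mathcal{O}_{C_j,0})\cap\mathcal{F}_C(\ell)$, each proper because $\mathcal{F}_C(\ell)$ contains non-zero-divisors of arbitrarily high valuation (by the conductor). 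In the case of interest $\mathcal{F}_C(\ell+1_i)$ is also a proper subspace (codimension one). Since a $\CC$-vector space is never the union of finitely many proper subspaces, a witness $g$ can always be taken to be a non-zero-divisor, and then $v(g)\in S_C$. With that sentence inserted, the proof is complete; the converse direction you give is already correct since elements of $S_C$ are by definition realized by non-zero-divisors.
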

The delta invariant $\delta(C)$ of $(C,0)$ is defined as follows.
Let $\gamma:(C,0)^{\widetilde{}}\to(C,0)$
be the normalization, where $(C,0)^{\widetilde{}}$ is the corresponding multigerm.
Then $\delta(C)$ is defined as $\dim_{{\CC}}\gamma_*\cO_{(C,0)^{\widetilde{}}}/\cO_{(C,0)}$.

\begin{lemma}[\cite{MZG}]
For any $\ell\in \NN^{|\I|}$ with $\ell\geq c$ one has
\begin{equation}\label{eq:sgpid}
\mathfrak{h}_C(\ell)=|\ell|-\delta(C),
\end{equation}
where the norm of $\ell=(\ell_i)_{i\in \I}\in \NN^{|\I|}$ is $|\ell|:=\sum_{i\in \I}\ell_i$.
\end{lemma}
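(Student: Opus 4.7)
The plan is to compare the filtration $\cF_C$ on $\cO_{C,0}$ with the natural filtration on the normalization and then to exploit the defining property of the conductor. Let $\gamma\colon(C,0)^{\widetilde{}}\to(C,0)$ be the normalization and set $\widetilde{\cO}:=\gamma_*\cO_{(C,0)^{\widetilde{}}}$; via the branch parametrizations $\gamma_i$ one has the identification $\widetilde{\cO}\cong\prod_{i\in\I}\CC\{t_i\}$. Extending the multivaluation $v$ coordinate-wise to $\widetilde{\cO}$, define
$$\widetilde{\cF}(\ell):=\{g\in\widetilde{\cO}\mid v(g)\geq\ell\}\cong\prod_{i\in\I}(t_i^{\ell_i}),$$
so that $\widetilde{\cO}/\widetilde{\cF}(\ell)\cong\prod_{i\in\I}\CC[t_i]/(t_i^{\ell_i})$ has $\CC$-dimension exactly $|\ell|$ whenever $\ell\in\NN^{|\I|}$.

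The main step is a classical comparison between two notions of conductor. On one side, the analytic conductor ideal of the inclusion $\cO_{C,0}\hookrightarrow\widetilde{\cO}$ is $\mathfrak{c}:=\{g\in\widetilde{\cO}\mid g\cdot\widetilde{\cO}\subset\cO_{C,0}\}$; on the other, the semigroup conductor $c\in\NN^{|\I|}$ is the minimal vector with $c+\NN^{|\I|}\subset S_C$. For reduced curve germs these two agree in the form $\mathfrak{c}=\widetilde{\cF}(c)$, so in particular $\widetilde{\cF}(\ell)\subset\cO_{C,0}$ for every $\ell\geq c$. Combined with the trivial inclusion $\cF_C(\ell)\subset\widetilde{\cF}(\ell)$, this forces $\cF_C(\ell)=\widetilde{\cF}(\ell)$ for $\ell\geq c$: indeed, any $g\in\widetilde{\cF}(\ell)$ already lies in $\cO_{C,0}$ and still satisfies $v(g)\geq\ell$, hence belongs to $\cF_C(\ell)$.

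With this identification in hand, I would conclude via the short exact sequence
$$0\to\cO_{C,0}/\cF_C(\ell)\to\widetilde{\cO}/\widetilde{\cF}(\ell)\to\widetilde{\cO}/\cO_{C,0}\to 0,$$
which is well defined precisely because $\widetilde{\cF}(\ell)\subset\cO_{C,0}$ (injectivity of the first arrow follows from $\cF_C(\ell)=\widetilde{\cF}(\ell)\cap\cO_{C,0}$, and surjectivity of the second is tautological). Taking $\CC$-dimensions and using $\delta(C)=\dim_{\CC}\widetilde{\cO}/\cO_{C,0}$ together with $\dim_{\CC}\widetilde{\cO}/\widetilde{\cF}(\ell)=|\ell|$ yields the desired identity $\mathfrak{h}_C(\ell)=|\ell|-\delta(C)$. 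The only nontrivial ingredient is the identification $\mathfrak{c}=\widetilde{\cF}(c)$ for reduced curve germs, a classical fact going back to Gorenstein; everything else is bookkeeping with dimensions.
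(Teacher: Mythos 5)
Your argument is correct, and there is nothing in the paper to diverge from: the paper gives no proof of this lemma at all, it simply quotes it from \cite{MZG}. Your route is the standard conductor argument, and the bookkeeping is sound: for $\ell\geq c$ one has $\widetilde{\cF}(\ell)\subseteq\cO_{C,0}$, hence $\cF_C(\ell)=\widetilde{\cF}(\ell)$, and then the exact sequence
$0\to\cO_{C,0}/\widetilde{\cF}(\ell)\to\widetilde{\cO}/\widetilde{\cF}(\ell)\to\widetilde{\cO}/\cO_{C,0}\to 0$
together with $\dim_\CC\widetilde{\cO}/\widetilde{\cF}(\ell)=|\ell|$ and $\dim_\CC\widetilde{\cO}/\cO_{C,0}=\delta(C)$ gives \eqref{eq:sgpid}. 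Two small observations. First, you do not need the full equality $\mathfrak{c}=\widetilde{\cF}(c)$: minimality of the semigroup conductor is irrelevant here, only the inclusion $\widetilde{\cF}(c)\subseteq\cO_{C,0}$ is used. Second, that inclusion is where all the content of the lemma lives, and in the several-branch analytic setting it is not completely formal: one uses $c+\NN^{|\I|}\subseteq S_C$ to produce elements of $\cO_{C,0}$ with a prescribed value $n_i\geq c_i$ on the $i$-th branch and arbitrarily large values on the remaining branches, runs a successive approximation, and descends from the completion via $\widehat{\cO}_{C,0}\cap\widetilde{\cO}=\cO_{C,0}$. Since you explicitly flag this identification as the classical nontrivial ingredient (it is indeed standard in the theory of value semigroups of reduced curve germs), your proof is complete as written and matches the argument one finds in the cited literature.
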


\subsubsection{\bf The relative CDGZ-identity}\label{ss:rCDGZ}
Let now $(C,0)$ be a reduced curve germ embedded in a normal surface singularity~$(X,0)$.
We fix a good embedded resolution $\pi:\tilde X \to X$ of $C\subset X$, and consider $L$ and $L'$
the corresponding lattices associated with $\pi$. Denote by $\widetilde C$ the strict transform of $C$
and $\I_C\subset \V$ be the set of irreducible exceptional divisors which intersect $\widetilde C$.

Moreover, we further assume that the resolution $\pi$ satisfies the following condition
\begin{equation}\label{rescond}
\mbox{every component } E_v \ (v\in \I_C\subset \V) \mbox{ of } E \mbox{ intersects only one component }
\widetilde{C}_v \mbox{ of } \widetilde C.
\end{equation}
This technical condition is not essential, but will simplify the exposition.

In the topological setting we define the \textit{relative multivariable topological Poincar\'e series}
associated with $C\subset X$ as the modified series
\begin{equation}\label{eq:rtPs}
Z^{C}(\tt):=Z(\tt)\cdot \prod_{v\in \I_C}(1-\tt^{E^*_v}),
\end{equation}
where $\I_C\subset \V$ is defined above. 
According to section~\ref{ss:set}, one can consider its $H$-decomposition, namely,
$Z^{C}(\tt)=\sum_{h\in H} Z^{C}_h(\tt)$ and the reduced series
$Z^C_h(\tt_\I):=Z^{C}_{h}(\tt)|_{t_v=1, v\notin \I}$.
Then Campillo, Delgado and Gusein-Zade proved the following identity relating
$P_C(\tt_I)$, the abstract Poincar\'e series of $C$, and $Z^C_{0}(\tt_I)$, the 0-part of the
relative multivariable topological Poincar\'e series,
see also~\cite[Corollary 9.4.1]{Nem-PS} in the equivariant context.
\begin{thm}[Relative CDGZ-identity \cite{CDGZ-PScurves}]\label{relCDGZ}
For any reduced curve germ $(C,0)$ embedded into the rational singularity $X$ one has
$$P_C(\tt_I)=Z^C_{0}(\tt_I).$$
\end{thm}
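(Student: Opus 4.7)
The plan is to derive the identity from the absolute CDGZ identity (Theorem~\ref{thm:CDGZ}) by multiplying both sides by the common factor $\prod_{v\in \I_C}(1-\tt^{E^*_v})$, and then to establish an intrinsic analytic identification of the $h=0$ reduction of the resulting series with the curve's Poincar\'e series.

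First, since $(X,0)$ is rational, Theorem~\ref{thm:CDGZ} gives $P(\tt)=Z(\tt)$. Multiplying both sides by $\prod_{v\in \I_C}(1-\tt^{E^*_v})$ and setting $P^C(\tt):=P(\tt)\cdot\prod_{v\in \I_C}(1-\tt^{E^*_v})$, we obtain $P^C(\tt)=Z^C(\tt)$. Taking the $h=0$ component in the $H$-decomposition and restricting to $\tt_{\I_C}$ yields
$$P^C_0(\tt_{\I_C})=Z^C_0(\tt_{\I_C}),$$
so it suffices to prove the analytic identity $P_C(\tt_{\I_C})=P^C_0(\tt_{\I_C})$, which no longer involves the topological side.

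For this analytic identification, the guiding principle is that the $h=0$ selection corresponds, via the eigenspace decomposition~\eqref{eq:decUAC} identifying $\cO_X$ with $(\cO_Y)_0$, to the restriction $\cO_Y\to\cO_X$, while the factor $\prod_{v\in \I_C}(1-\tt^{E^*_v})$ together with the restriction to $\tt_{\I_C}$ implement the further restriction $\cO_X\to\cO_C$ and the Poincar\'e--Hilbert transform $P_C=-H_C\cdot\prod_{v\in \I_C}(1-t_v^{-1})$ defining the curve series. The key input is assumption~\eqref{rescond}: each strict transform $\widetilde{C}_v$ is a curvette of $E_v$, so $\pi^*(C_v)=\widetilde{C}_v+E^*_v$, and the valuation $v_{C_v}$ along $C_v$ of $g\in\cO_X$ reads off the $E_v$-multiplicity of $\div(g\circ\pi)$ together with the intersection of the strict transform of $\div(g)$ with $\widetilde{C}_v$.

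The main obstacle is the precise combinatorial matching of coefficients. On the ambient side, the shifts $E^*_v\in L'$ introduced by the factors $(1-\tt^{E^*_v})$ have nontrivial $E_u$-components at many $u\in \V$, whereas the shifts $1_v$ in the definition of $P_C$ are simple branch-valuation shifts in $\NN^{\I_C}$. Reconciliation proceeds by expanding $P^C(\tt)$ via $P=-H\prod_{v\in \V}(1-t_v^{-1})$ together with the $H$-eigenspace decomposition, collecting the terms contributing to $h=0$, specializing $t_u=1$ for $u\notin \I_C$, and comparing with~\eqref{eq:coefC} through the induced surjection of filtrations $\cF(\ell')\twoheadrightarrow\cF_C((l'_v)_{v\in \I_C})$. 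The delicate step is verifying that the equivariant structure together with the specialization forces the $E^*_v$-shifts to descend correctly to $1_v$-shifts on the curve side; this should be a structural identity valid in greater generality than the rational case, with rationality of $(X,0)$ entering only in the first step via Theorem~\ref{thm:CDGZ}.
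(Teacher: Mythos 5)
The paper does not prove this theorem: it is imported verbatim from Campillo--Delgado--Gusein-Zade \cite{CDGZ-PScurves}, with an additional pointer to \cite[Cor.~9.4.1]{Nem-PS} for the ``equivariant context''. So there is no in-paper proof to compare against; I can only assess your strategy on its own terms.

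Your overall reduction is the right one and is exactly the alternative route the paper itself hints at via \cite{Nem-PS}. Steps (1)--(3) --- $P=Z$ by rationality, multiply by $\prod_{v\in I_C}(1-\tt^{E_v^*})$, extract the $h=0$ part and restrict variables --- are clean and correct, and they reduce the statement to the purely analytic identity $P_C(\tt_{I_C})=P^C_0(\tt_{I_C})$. This is indeed a statement independent of rationality, with rationality entering only through the absolute CDGZ identity. That much is a genuinely different (and in some ways more structural) approach than the direct Euler-characteristic computation in \cite{CDGZ-PScurves}, since it isolates where the topological input is used.

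However, the proposal has a genuine gap at precisely the step you flag. The analytic identity $P_C=P^C_0$ is not proved; it is only motivated. The difficulty you point out is real: for $g\in\cO_X$, the curve valuation along the branch $C_i$ meeting $E_{v}$ (so $v\in I_C$) is $v_i(g|_C)=m_v(g)+(\widetilde{D},\widetilde{C}_v)_p$, where $m_v(g)$ is the $E_v$-multiplicity of $\div(g\circ\pi)$ and $(\widetilde{D},\widetilde{C}_v)_p$ is the local intersection of the strict transform $\widetilde{D}$ of $\div(g)$ with the curvette $\widetilde{C}_v$. So the restriction of the $L'$-divisorial filtration of $\cO_X$ to $\cO_C$ is \emph{not} the valuation filtration of $\cO_C$; the correction by $\prod_{v\in I_C}(1-\tt^{E^*_v})$ is what is supposed to absorb the extra term $(\widetilde{D},\widetilde{C}_v)_p$, and showing this requires an actual argument (e.g. the exact sequence $0\to \cF(\ell'+\ell'_C)\to\cF(\ell')\to\cF_C(\ell'|_{I_C})\to 0$-type relations and careful bookkeeping of the $H$-eigenspaces, as in \cite[\S 9]{Nem-PS}). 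As written, your text names the obstacle, calls it ``delicate'', and asserts that ``this should be a structural identity'' --- which is an accurate diagnosis but not a proof. To complete the argument you would need to actually prove the identity $P_C=P^C_0$ (or cite \cite[Cor.~9.4.1]{Nem-PS} for it), rather than gesture at the eigenspace decomposition.
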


\begin{rem}
As mentioned above, the extra assumption~\eqref{rescond} on the resolution $\pi$ is unnecessary for
the above identity, cf.~\cite[Theorem 2]{CDGZ-PScurves}, however it makes the previous and
upcoming statements more transparent.
\end{rem}

\section{The \texorpdfstring{$\kappa$}{kappa}-invariant for a~\texorpdfstring{\RHS}{RHS} singularity}
\label{sec:kappa-inv}
In this section we follow the notation from section~\ref{s:Pseries}. In a previous article~\cite{kappa}
the authors defined the following invariant associated with a resolution $\pi:\tilde X\to X$.

\begin{dfn}
For any $\ell'\in L'$ set
\begin{equation}\label{kappa_big}
\kappa_X(\ell'):=\mathfrak{h}(Z_K+\ell')=\dim_\CC \Big(\frac{\cO_{Y}}{\mathcal{F}(Z_K+\ell')}\Big)_{[Z_K+\ell']}.
\end{equation}
\end{dfn}
By the inversion identity~\eqref{eq:hinv} it follows that
\begin{equation}\label{eq:kapcf}
\kappa_X(\ell')=\sum_{\ell\in L,\ell\ngeq 0} \mathfrak{p}(Z_K+\ell'+\ell)
=\sum_{\smallmatrix \ell''\ngeq Z_K+\ell' \\ [\ell'']=[Z_K+\ell'] \endsmallmatrix} \mathfrak{p}(\ell'')
\end{equation}
is given by the counting function of coefficients associated with the analytic Poincar\'e series $P_{[Z_K+\ell']}(\tt)$.

Next, assume that $(C,0)$ is a reduced curve germ on $(X,0)$. The total transform of $C$ on $\tilde X$ is written as
$\pi^* C =\widetilde C+\ell'_C$, where $\ell'_C\in \mathcal{S}'$ is the cycle supported on the exceptional divisor,
 $\widetilde C$ is the strict transform of $C$, and $\pi^*C$ as element of $H_2(\tilde X, \partial \tilde X, {\mathbb Z})$ is trivial.
It turns out that the invariant $\kappa(\ell'_C)$ is independent
of the resolution $\pi$ as long as one chooses a good resolution of~$(X,C)$~(see~\cite[Corollary 3.4]{kappa}).

This justifies the following definition of the $\kappa$-invariant of a reduced curve germ $C\subset X$ in a
\RHS surface singularity.

\begin{dfn}
\label{def:kappa}
Let $(X,0)$ be a \RHS surface singularity and $C\subset X$ a reduced curve germ. The $\kappa$-invariant of the pair $C\subset X$
is defined as
\begin{equation}
\kappa_X(C)=\kappa_X(\ell'_C)=\dim_\CC \Big(\frac{\cO_{Y}}{\mathcal{F}(Z_K+\ell'_C)}\Big)_{[Z_K+\ell'_C]}.
\end{equation}
\end{dfn}
One of the main results of~\cite{kappa} relates $\kappa_X(C)$ and the $\delta$-invariant of $(C,0)$ as follows.

\begin{thm}[{\cite[Theorem 1.1 \& 1.2]{kappa}}]
\label{thm:mainkappa}
If $(X,0)$ is a rational surface singularity and $C\subset X$ a reduced curve germ, then
\begin{equation}
\label{eq:mainkappa}
\kappa_{X}(C)=\delta(C)=\chi(-\ell_C')-\chi(s_{-[\ell'_C]}).
\end{equation}
\end{thm}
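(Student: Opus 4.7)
My plan is to follow the schematic diagram from the introduction: compute $\kappa_X(C)$ directly via a topological counting function, separately reduce $\delta(C)$ to the same counting function through the relative CDGZ-identity and twisted duality, and finally identify the common value with the stated $\chi$-expression.

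For the formula $\kappa_X(C)=\chi(-\ell'_C)-\chi(s_{-[\ell'_C]})$, I would proceed as follows. By Definition~\ref{def:kappa} and the inversion identity~\eqref{eq:kapcf}, $\kappa_X(C)$ is the counting function of the analytic Poincar\'e series $P$ at $Z_K+\ell'_C$. For rational $(X,0)$, Theorem~\ref{thm:CDGZ} gives $P(\tt)=Z(\tt)$, hence $\kappa_X(C)=Q_{[Z_K+\ell'_C]}(Z_K+\ell'_C)$. Because $\ell'_C\in\cS'$, the argument $Z_K+\ell'_C$ lies in $Z_K+\cS'$, and Corollary~\ref{cor:sharp} together with Lipman's vanishing identifies $Q_h(\ell')=\chi(\ell')-\chi(s_h)$ on this shifted cone. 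A direct manipulation of $\chi(\ell')=-\tfrac{1}{2}(\ell',\ell'-Z_K)$ yields the Riemann--Roch symmetry $\chi(\ell')=\chi(Z_K-\ell')$, so $\chi(Z_K+\ell'_C)=\chi(-\ell'_C)$. The conjugation symmetry $\mathfrak{sw}_h^{\mathrm{norm}}=\mathfrak{sw}_{[Z_K]-h}^{\mathrm{norm}}$ (equivalently, applying the dual of Corollary~\ref{cor:sharp}) then gives $\chi(s_{[Z_K+\ell'_C]})=\chi(s_{-[\ell'_C]})$, completing this equality.

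For $\delta(C)=\kappa_X(C)$ I would use the machinery of Parts I and II. Part I (formulas~\eqref{intro:delta1}--\eqref{intro:delta2}) writes $\delta(C)$ as a signed combination of $\mathrm{pc}(P_{C_i})$ and evaluations $P_{C_J}(1_J)$ for $|J|>1$. The relative CDGZ-identity (Theorem~\ref{relCDGZ}) converts each $P_{C_J}(\tt_{I_{C_J}})$ into $Z^{C_J}_0(\tt_{I_{C_J}})$. Expanding the product $\prod_{v\in I_{C_J}}(1-\tt^{E^*_v})$ that defines $Z^{C_J}$ as an alternating sum, and applying the twisted duality~\eqref{eq:pc1}--\eqref{eq:pc2} to each resulting piece, translates every periodic constant into a counting function evaluation of $Z(\tt)$ at a shifted point. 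Repeated application of the surgery formula (Theorem~\ref{thm:surgform}) together with the inclusion-exclusion identity~\eqref{eq:InEx} then collapses the double alternating sum to the single term $Q_{[Z_K+\ell'_C]}(Z_K+\ell'_C)$, which by the previous step equals $\kappa_X(C)$.

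The main obstacle will be the combinatorial bookkeeping in the last paragraph: two independent alternating structures --- one over the irreducible components of $C$ coming from Part I, another over the exceptional vertices in $I_C$ coming from the product defining $Z^C$ --- must be reconciled through the surgery formula and inclusion-exclusion. The surgery and inclusion-exclusion identities are tailor-made for this, but verifying that every cross-term cancels, and that the boundary terms in the twisted duality match exactly with the shift $Z_K+\ell'_C$, will require careful accounting on the subgraphs $\Gamma_k$ determined by $\V\setminus I_C$.
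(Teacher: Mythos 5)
Your proposal matches the paper's own topological proof in section~\ref{sec:deltakappa}. The second paragraph is exactly the sequence of moves performed in the proof of Theorem~\ref{thm:deltakappa}: Theorem~\ref{thm:delta-sec}, the relative CDGZ-identity (Theorem~\ref{relCDGZ}), expansion of $\prod_{v\in I_C}(1-\tt^{E^*_v})$, twisted duality (Corollary~\ref{reldual}), and the surgery formula (Theorem~\ref{thm:surgform}). Your anticipation of the combinatorial bookkeeping is on target, with one correction: the two alternating structures are \emph{not} independent. Under~\eqref{rescond} the branches $C_i$ are in bijection with the vertices of $I_C$, so the paper's double sum runs over nested pairs $K\subset J\subset I_C$; the surgery formula reduces each $Q_{-h_K,J}$ to $Q_{-h_K}$ plus subgraph contributions $Q^{\Gamma_k}$, which vanish because each $\Gamma_k$ is itself rational, and the binomial identity $\sum_{K\subset J\subset I}(-1)^{|J|}=0$ for $K\subsetneq I$ then collapses everything to the single term $K=J=I_C$, giving $Q_{[Z_K+\ell'_C],I_C}(Z_K+\ell'_C)$, which equals $\kappa_X(C)$ by Theorem~\ref{thm:eqratsing}.

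The one step you should discard is the last sentence of your first paragraph. You deduce $\chi(s_{[Z_K+\ell'_C]})=\chi(s_{-[\ell'_C]})$ from a conjugation symmetry $\mathfrak{sw}^{\mathrm{norm}}_h=\mathfrak{sw}^{\mathrm{norm}}_{[Z_K]-h}$ of the normalized Seiberg--Witten invariants. Even granting that symmetry, Corollary~\ref{cor:sharp} gives $\mathfrak{sw}^{\mathrm{norm}}_h=\chi(r_h)-\chi(s_h)$, so you would only obtain $\chi(s_h)-\chi(s_{[Z_K]-h})=\chi(r_h)-\chi(r_{[Z_K]-h})$, and the right-hand side need not vanish: $Z_K-r_{[Z_K]-h}$ is a representative of $h$ in $L'$ but typically differs from $r_h$, so $\chi(r_h)\neq\chi(r_{[Z_K]-h})$ in general. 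What the paper actually does is invoke the twisted duality Corollary~\ref{reldual} with $h=0$, $\ell'_0=\ell'_C$, obtaining $Q_{[Z_K+\ell'_C]}(Z_K+\ell'_C)=\widetilde{Q}_{[-\ell'_C]}(-\check{\ell}_C')$, and then evaluate both sides via the quasi-polynomial formula~\eqref{eq:Qform} combined with the Riemann--Roch symmetry $\chi(Z_K+\ell'_C)=\chi(-\ell'_C)$; this produces exactly $\chi(s_{[Z_K+\ell'_C]})=\chi(s_{[-\ell'_C]})$. Replace the conjugation-symmetry shortcut with this duality argument, which you have already set up for the $\delta(C)=\kappa_X(C)$ part.
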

Moreover, \cite[Theorem 4.2(3)]{kappa} in the context of an arbitrary normal surface singularity
describes those terms which obstruct the identity $\kappa_{X}(C)=\delta(C)$.
They all vanish when $(X,0)$ is rational. For an arbitrary $(X,0)$, \cite[Lemma 3.3]{kappa} provides the identity
$$\kappa_X(C)=\chi(-\ell_C')-\chi(r_{[Z_K+\ell_C']})+h^1(\cO_{\tilde X}(-r_{[Z_K+\ell_C']})).$$
Here $\chi(-\ell_C')$ and $\chi(r_{[Z_K+\ell_C']})$ depend on the embedded topological type of the pairs
$C\subset X$, while $h^1(\cO_{\tilde X}(-r_{[Z_K+\ell_C']}))$ depends both on the analytic type of $(X,0)$
and on the embedded topological type of the pair. Hence, in this generality, it is hard to expect that 
$\kappa_X(C)$ might be computable either from the embedded topological type of the pair,
or from the analytic type of the abstract curve $(C,0)$.

Furthermore, in general, $\delta(C)$ (as an abstract analytic invariant of $(C,0)$), even if $(C,0)$  is embedded in some $(X,0)$, it cannot be
determined from the embedded topological type of $C\subset X$.

Indeed, in section~\ref{ex:non-rational} we exhibit two topologically equivalent reduced curve germs $C$
and $C_1$ on the non-rational surface $X=\{x^2+y^3+z^7=0\}$ satisfying $\kappa_X(C)=\kappa_X(C_1)=2$,
$\delta(C)=1$, and $\delta(C_1)=0$. This shows that $\kappa_X(C)=\delta(C)$ does not extend to
${\rm Pic}^0(\tilde X)\not=0$ case (even if the link is an integral homology sphere).

These facts motivate to introduce the {\em reduced} and the {\em topological} versions of the 
$\kappa$-invariant by using the corresponding counting functions in the spirit of~\eqref{eq:kapcf}.
In the rest of this section we want to study the analytical and topological properties of these objects and we wish to compare them.

\subsection{A reduced version of the \texorpdfstring{$\kappa$}{kappa}-invariant}
Let $\I_C\subset \V$ be the set of irreducible divisors intersecting $\widetilde C$ and we assume that $\pi$
satisfies \eqref{rescond}, i.e. $E_v$ intersects only one component $\widetilde C_v$ of $\widetilde C$ for each~$v\in \I_C$.
It seems natural to consider the reduced filtration $\mathcal{F}(\ell'_{I_C})$ associated with
$I_C\subset\V$ (cf.~\eqref{eq:redfilt}), since this set $I_C$ carries certain intrinsic geometrical
properties of the curve germ. In this section we give the general definition for the reduced version
of the~$\kappa$-invariant.

As in the previous section we consider a surface singularity with \RHS link and we fix a good
resolution $\pi$. Let $\ell'\in L'$ and $I\subset\V$.

\begin{dfn}
The reduced $\kappa$-invariant of $\ell'$ associated with $\I$
is given by
\begin{equation}\label{kappa_reduced}
\kappa_{X,\I}(\ell'):=\mathfrak{h}_{[Z_K+\ell'], \I}((Z_K+\ell')|_{\I}).
\end{equation}
\end{dfn}
Note that section~\ref{ss:red} implies that $\kappa_{X,\I}$ is given by the counting function associated with
$P_{[Z_K+\ell']}(\tt_{\I})$.

This general definition applies in the presence of a curve $(C,0)$, when $I=I_C$ and $\ell' =\ell'_C$.
However, in general, $\kappa_{X,\I_C}(\ell'_C)$ depends on the choice of a good resolution of $(X,C)$, even when $(C,0)$ is reduced.
Hence, it might be different from $\kappa_X(\ell'_C)$. This phenomenon is illustrated in section~\ref{ex:non-rational}, where
the reduced $\kappa$-invariant of the
curve $C=\{z=x^2+y^3=0\}$ embedded into $X=\{x^2+y^3+z^7=0\}$ has different values computed in two different resolutions.

On the other hand, the main advantage of the reduced $\kappa$-invariant is that it depends on a special subset of vertices
of $\Gamma$, namely, only on those vertices which are associated with exceptional divisors intersecting the irreducible components of $(C,0)$.
This allows one the possibility to compare it directly with certain abstract invariants of the curve.
For instance, in the case of $\kappa_{X,I_C}(\ell'_C)$ for $X$ rational surface singularity, we will prove that
in fact $\kappa_{X,I_C}(\ell'_C)=\kappa_{X}(\ell'_C)$. In particular, this reduced $\kappa$-invariant carries
enough information to determine the full $\kappa$-invariant of an embedded curve, which by Theorem~\ref{thm:mainkappa}
coincides with~$\delta(C)$.

\subsection{Topological counterparts of \texorpdfstring{$\kappa$}{kappa}-invariants}
In~\eqref{eq:kapcf} the $\kappa$-invariant was exhibited as a counting function of the analytic Poincar\'e series of $X$.
It seems natural to define the topological counterpart of this expression obtaining a combinatorial invariant that depends
only on the dual graph $\Gamma$. The extend to which this invariant recovers the original analytic version will be
discussed in this section.

The decorated dual graph $\Gamma$ determines the lattices $L\subset L'$ and the class $[\ell']\in H=L'/L$ of an
element $\ell'\in L'$. Consider the counting function $Q(\ell')$ (see section~\ref{ss:countingfunctions}) of the
topological Poincar\'e series $Z(\tt)$. For any $\ell'\in L'$ and a subset $\I\subset \V$ we set the following:

\begin{dfn}
\begin{equation}\label{def:ktop}
\kappa_{\Gamma}(\ell'):= Q_{[Z_K+\ell']}(Z_K+\ell') \ \, \, \ \mbox{and} \ \, \, \
\kappa_{\Gamma,\I}(\ell'):=Q_{[Z_K+\ell'],\I}(Z_K+\ell').
\end{equation}
\end{dfn}
The example in section~\ref{ex:non-rational} shows that in general
$\kappa_{\Gamma}(\ell')$ and $\kappa_{\Gamma,\I}(\ell')$ can be different and they also might
depend (even under the natural identifications of $\ell'$ and $I$) on the chosen resolution.

\subsection{\texorpdfstring{$\kappa$}{kappa}-invariants for rational singularities}
In order to identify the analytic and topological $\kappa$-invariants
 one needs to use the CDGZ-identity, which is valid for any rational surface germ.
Though the CDGZ-identity is valid even for the larger class of splice quotient singularities,
the equality of the full and reduced $\kappa$-invariants
is specific exactly to rational singularities (it cannot be extended in general for
larger classes). In order to make this fact more transparent, we split the results relating
the $\kappa$-invariants into two: Theorem~\ref{eqCDGZsing} and \ref{thm:eqratsing}.

\begin{thm}\label{eqCDGZsing}
Assume that $(X,0)$ is a \RHS surface singularity and $\pi$ is a good resolution for which the CDGZ-identity holds,
that is, $P(\tt)=Z(\tt)$. Let $\Gamma$ be the dual graph of $\pi$.
Then for any $\ell'\in L'$ and $\I\subset \V$ we have
$\kappa_X(\ell')=\kappa_{\Gamma}(\ell')$ and $\kappa_{X,\I}(\ell')=\kappa_{\Gamma,\I}(\ell')$.
\end{thm}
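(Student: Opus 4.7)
The plan is to unwind the two $\kappa$-invariants until each becomes a counting function of a multivariable series, and then use the CDGZ-identity to match the analytic series with the topological one.

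First I would treat the non-reduced statement $\kappa_X(\ell')=\kappa_\Gamma(\ell')$. By Definition~\ref{def:kappa} one has $\kappa_X(\ell')=\mathfrak{h}(Z_K+\ell')$. Combining the inversion identity~\eqref{eq:hinv} (which expresses $\mathfrak{h}$ as a sum of coefficients of $P(\tt)$) with the rewriting in~\eqref{eq:kapcf}, the invariant $\kappa_X(\ell')$ is exactly the value at $Z_K+\ell'$ of the counting function of the $h$-part $P_h(\tt)$, where $h=[Z_K+\ell']$. On the other hand, $\kappa_\Gamma(\ell')=Q_{[Z_K+\ell']}(Z_K+\ell')$ is by construction the counting function of the $h$-part of $Z(\tt)$ evaluated at the same point. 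The CDGZ hypothesis $P(\tt)=Z(\tt)$ is an identity of elements of $\ZZ[[L']]$, so it passes to the $H$-decomposition and gives $P_h(\tt)=Z_h(\tt)$ for every $h\in H$; in particular the two counting functions coincide. The finiteness condition~\eqref{eq:finiteness} for both series (established in~\ref{ss:PZ}) ensures that all the sums involved are finite, so the equality is unambiguous.

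For the reduced statement $\kappa_{X,\I}(\ell')=\kappa_{\Gamma,\I}(\ell')$ the same argument works, with one extra care. By~\eqref{kappa_reduced} and the discussion in section~\ref{ss:red}, the reduced analytic Poincar\'e series $P_{h}(\tt_\I)$ (defined as $P_h(\tt)|_{t_v=1,v\notin \I}$, following the convention in section~\ref{ss:set}) is the generating series whose counting function at $(Z_K+\ell')|_\I$ equals $\kappa_{X,\I}(\ell')$, for $h=[Z_K+\ell']$. Dually, $\kappa_{\Gamma,\I}(\ell')=Q_{h,\I}(Z_K+\ell')$ is by definition the counting function of $Z_h(\tt_\I)$. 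The key point is that the reduction must be performed after the $H$-decomposition, as emphasized in section~\ref{ss:set}; but this is precisely how the reduced series are defined on both sides. Since $P(\tt)=Z(\tt)$ implies $P_h(\tt)=Z_h(\tt)$ for each $h$, applying the specialization $t_v=1$ for $v\notin \I$ to both sides yields $P_h(\tt_\I)=Z_h(\tt_\I)$, and hence the equality of their counting functions at any point.

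I do not expect a serious obstacle here: the result is really a dictionary between definitions, and the only substantive input is the CDGZ-identity, which is assumed. The most delicate issue worth spelling out is the warning from section~\ref{ss:set} that the $H$-decomposition of the reduced series is not intrinsically defined from $S(\tt_\I)$; however, since we are starting from the equality $P(\tt)=Z(\tt)$ \emph{before} reduction, no ambiguity arises. The statement about $\kappa_X$ can also be seen as the $\I=\V$ specialization of the reduced statement, so in the actual write-up one could present a single unified argument.
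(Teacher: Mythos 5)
Your proposal is correct and follows essentially the same route as the paper's proof: use the inversion identity to express both $\kappa$-invariants as counting functions of the $h$-part of the respective Poincar\'e series (analytic and topological), invoke $P_h(\tt)=Z_h(\tt)$ from the CDGZ hypothesis, and for the reduced statement appeal to the compatibility of the two definitions of the reduced series from section~\ref{ss:red}. The extra care you take to note that the reduction is performed \emph{after} the $H$-decomposition on both sides is exactly the point the paper leaves implicit when it says ``the reduced CDGZ-identity for $\I\subset\V$ is valid too.''
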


\begin{proof}
The assumption $P_{[Z_K+\ell']}(\tt)=Z_{[Z_K+\ell']}(\tt)$ for $\pi$ implies that the corresponding coefficient
functions are equal as well. Thus
$$
\kappa_X(\ell') \eqmap{\eqref{eq:kapcf}}
\sum_{\smallmatrix \ell''\ngeq Z_K+\ell' \\ [\ell'']=[Z_K+\ell'] \endsmallmatrix} \mathfrak{p}(\ell'')=Q_{[Z_K+\ell']}(Z_K+\ell')
\eqmap{\eqref{def:ktop}} \kappa_{\Gamma}(\ell').$$
On the other hand, the reduced CDGZ-identity for $\I\subset \V$ is valid too by
section~\ref{ss:red}, which gives the reduced identities $\kappa_{X,\I}(\ell')=\kappa_{\Gamma,\I}(\ell')$.
\end{proof}

Before we state the final result for rational singularities one needs to define the concept
of $E^*$-support of a cycle,
which generalizes the object $I_C$. 
Given a cycle $\ell'\in L'$, we define its support $\Supp^*(\ell')$ with respect to the $\{E^*_v\}$-basis
as $\Supp^*(\ell'):=\{v\in\V \mid (E_v,\ell')\neq 0\}$.
Note that, if $(C,0)$ is a curve germ, then $I_C=\Supp^*(\ell'_C)$.

\begin{thm}\label{thm:eqratsing}
Let $(X,0)$ be a rational singularity. Then for any $\ell'\in\cS'$ one has
\begin{equation}
\label{eq:eqratsing}
\kappa_X(\ell')=\kappa_\Gamma(\ell')=\kappa_{\Gamma,\Supp^*(\ell')}(\ell')=\kappa_{X,\Supp^*(\ell')}(\ell').
\end{equation}
In particular, if $(C,0)$ is a reduced curve germ in $(X,0)$, and $I_C=\Supp^*(\ell'_C)$,
then
$$\kappa_X(C)=\kappa_{X,I_C}(\ell'_C)=\kappa_{\Gamma}(\ell'_C)=\kappa_{\Gamma,I_C}(\ell'_C),$$
which is an embedded topological invariant of~$C\subset X$.
\end{thm}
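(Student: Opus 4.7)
The plan is to establish the four-way equality in two stages: first identify the analytic and topological $\kappa$-invariants, then prove the reduced/full coincidence. Since $(X,0)$ is rational, the CDGZ identity $P(\tt)=Z(\tt)$ holds by Theorem~\ref{thm:CDGZ}, and Theorem~\ref{eqCDGZsing} immediately gives $\kappa_X(\ell')=\kappa_\Gamma(\ell')$ and $\kappa_{X,I}(\ell')=\kappa_{\Gamma,I}(\ell')$ for every $I\subset\V$. Thus the heart of the statement is the middle equality $\kappa_\Gamma(\ell')=\kappa_{\Gamma,I}(\ell')$ with $I:=\Supp^*(\ell')$, which I would prove via the surgery formula (Theorem~\ref{thm:surgform}) together with a vanishing on each component $\Gamma_k$ of~$\Gamma\setminus I$.

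Applying Theorem~\ref{thm:surgform} to the cycle $\ZK+\ell'$ with the splitting $I$, one obtains
\[Q^\Gamma_h(\ZK+\ell')=Q^\Gamma_{h,I}(\ZK+\ell')+\sum_k Q^{\Gamma_k}_{[j^*_k(\ZK+\ell')]}(j^*_k(\ZK+\ell')).\]
The key observation for the first reduction is: writing $\ell'=-\sum_v(\ell',E_v)E^*_v$ and using $I=\Supp^*(\ell')$ together with $\V(\Gamma_k)\cap I=\emptyset$, each projection satisfies $j^*_k(\ell')=0$; combined with~\eqref{eq:proj\ZK} this gives $j^*_k(\ZK+\ell')=\ZK(\Gamma_k)$. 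Thus the equality $\kappa_\Gamma(\ell')=\kappa_{\Gamma,I}(\ell')$ reduces to the vanishing $\sum_k Q^{\Gamma_k}_{[\ZK(\Gamma_k)]}(\ZK(\Gamma_k))=0$. Theorem~\ref{thm:surgform} is stated for $a_v\gg 0$, but in the rational setting all counting functions involved are quasi-polynomials on $\ZK+\cS'$ by Corollary~\ref{cor:sharp}, so the identity extends to our specific point of evaluation.

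Since rationality is inherited by full subgraphs (the projection formula gives $\chi_{\Gamma_k}|_{L(\Gamma_k)}=\chi_\Gamma|_{L(\Gamma_k)}$, so Artin's criterion transfers), each $\Gamma_k$ is again rational, and Corollary~\ref{cor:sharp} yields $Q^{\Gamma_k}_{h_k}(\ell'')=\chi(\ell'')-\chi(s_{h_k})$ on $\ZK(\Gamma_k)+\cS'(\Gamma_k)$. Evaluation at $\ell''=\ZK(\Gamma_k)$ produces $-\chi(s_{[\ZK(\Gamma_k)]})$, since $\chi(\ZK(\Gamma_k))=0$. The problem thus reduces to the key lemma: \emph{for a rational singularity with minimal good resolution, $s_{[\ZK]}=\ZK$.} To prove this, note that $E_v^2\leq -2$ forces $\ZK\in\cS'_{[\ZK]}$, so $s:=s_{[\ZK]}\leq\ZK$; if the inequality were strict, $m:=\ZK-s\in L_{>0}$ would be a non-zero effective integer cycle, and the elementary identity $\chi(\ZK-m)=\chi(m)$ (immediate from $\chi(\ell')=-(\ell',\ell'-\ZK)/2$) combined with $\chi(s)=(s,m)/2\leq 0$ (as $s\in\cS'$ and $m\geq 0$) would give $\chi(m)\leq 0$, contradicting Artin's criterion $\chi(m)\geq 1$ for rational singularities.

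The main obstacle is the setup for this lemma, as it requires $E_v^2\leq -2$ for $v\in\V(\Gamma_k)$. The fix is to exploit the resolution-independence of the $\kappa$-invariants: I would take a good embedded resolution built from a minimal good resolution of $(X,0)$ by additional blowups only along the curve direction, so that every newly created exceptional divisor intersects~$\widetilde{C}$ and thus lies in $I_C$. Then each $\Gamma_k\subset\Gamma\setminus I$ inherits the minimal-good property and the key lemma applies. The ``in particular'' statement for a reduced curve germ $C\subset X$ is the special case $\ell'=\ell'_C$, $I_C=\Supp^*(\ell'_C)$, and the common value is topological because the chain terminates at the purely combinatorial $\kappa_{\Gamma,I_C}(\ell'_C)$.
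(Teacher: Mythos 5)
Your plan takes a genuinely different route from the paper's. The paper proves $\kappa_X(\ell')=\kappa_{X,\Supp^*(\ell')}(\ell')$ by a purely analytic argument: it expresses both sides via line-bundle cohomology using~\eqref{eq:hell}, applies Grauert-Riemenschneider vanishing and the restriction sequence to an auxiliary cycle $\ell_0$ supported off $I'$, and exploits rationality through the triviality of the numerically trivial bundle $\cO_{\ell_0}(\ell')$ and $h^1(\cO_{\ell_0})=0$. Your route is topological: surgery formula plus vanishing of the $\Gamma_k$-contributions. The paper derives exactly this surgery statement as Corollary~\ref{cor:surgform}, but only \emph{after} — and as a consequence of — Theorem~\ref{thm:eqratsing}.

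That order matters, and it is where your argument has a genuine gap. Theorem~\ref{thm:surgform} holds for $a_v\gg 0$ in every $E^*_v$-coordinate, whereas the evaluation point $\ZK+\ell'$ has $E^*_v$-coefficient zero for $v\notin I$. To extend the identity you claim that \emph{all} counting functions involved are quasi-polynomial on $\ZK+\cS'$ by Corollary~\ref{cor:sharp}; but that corollary addresses only the unreduced $Q^{\Gamma}_h$ (and, applied to the subgraphs, $Q^{\Gamma_k}$). It says nothing about the reduced counting function $Q^{\Gamma}_{h,I}$, whose agreement with its quasi-polynomial at $\ZK+\ell'$ is precisely the statement you are trying to prove. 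Using the surgery formula at $\ZK+\ell'$ therefore presupposes Corollary~\ref{cor:surgform}, which the paper deduces \emph{from} this theorem. You would need an independent argument that $Q^\Gamma_{h,I}$ agrees with its quasi-polynomial at that point.

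Two smaller points. First, your key lemma $s_{[\ZK]}=\ZK$ is correct and elegantly proved (the chain $\chi(s)=\chi(m)\leq 0$ contradicting Artin's $\chi(m)\geq 1$ is exactly right), but it is not needed: the vanishing $Q^{\Gamma_k}_{[\ZK(\Gamma_k)]}(\ZK(\Gamma_k))=0$ follows for any resolution by Corollary~\ref{olddual} with $h=0$ and $\I=\V(\Gamma_k)$, giving $\pc(Z^{\Gamma_k}_0(\tt))=p_g(X_k)=0$ — this is what the paper does in Corollary~\ref{cor:surgform}. Second, your fix to guarantee $E_v^2\leq -2$ on each $\Gamma_k$ — blowing up only along the curve so new exceptional divisors land in $I_C$ — does not hold: an intermediate exceptional divisor may detach from the strict transform after subsequent blowups, and the argument in any case does not cover the general $\ell'\in\cS'$ of the theorem, only $\ell'=\ell'_C$.
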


\begin{proof}
In order to prove~\eqref{eq:eqratsing} we use Theorems~\ref{thm:CDGZ} and \ref{eqCDGZsing}. It is enough to show
$\kappa_X(\ell')=\kappa_{X,\Supp^*(\ell')}(\ell')$, which is equivalent to
$\mathfrak{h}_h(Z_K+\ell')=\mathfrak{h}_{h,I}(Z_K+\ell')$, where $h=[Z_K+\ell']$ and
$I={\rm Supp}^*(\ell')$. Set any $I'$ such that $I\subset I' \subset V$. Then, from the definition of
the relative Hilbert coefficients, there exists an effective integral cycle
$\ell_0=\sum_{v\in V\setminus I'} m_vE_v$, with all $\{m_v\}_{v\in V\setminus I'}$
sufficiently large, such that $\mathfrak{h}_{h,I'}(Z_K+\ell')= \mathfrak{h}_{h}( Z_K+\ell'-\ell_0)$.
Then, by~\eqref{eq:hell} we have
$$\mathfrak{h}_{h,I'}(Z_K+\ell')= \chi(Z_K+\ell'-\ell_0)-h^1(\cO_{{\tilde X}}(-Z_K-\ell'+\ell_0))-\chi(r_h)+h^1(\cO_{{\tilde X}}(-r_h)).$$
Next, consider the cohomology long exact sequence associated with the short exact sequence
of sheaves
$$0\to \cO_{{\tilde X}}(-Z_K-\ell')\to \cO_{{\tilde X}}(-Z_K-\ell'+\ell_0)\to \cO_{\ell_0}(-Z_K-\ell'+\ell_0)\to 0.$$
Since $h^1(\cO_{{\tilde X}}(-Z_K-\ell'))=0$ by Grauert-Riemenschneider's vanishing Theorem, we obtain that
$ h^1(\cO_{{\tilde X}}(-Z_K-\ell'+\ell_0))=h^1(\cO_{\ell_0}(-Z_K-\ell'+\ell_0))$.
Now, $(\ell', E_v)=0$
for any $v\in V\setminus I'$, hence the line bundle $\cO_{\ell_0}(\ell')$ is numerically trivial.
Since $\ell_0$ is supported by the exceptional set of a resolution of a rational singularity, this 
bundle is in fact trivial. This and Serre duality imply
$$h^1(\cO_{\ell_0}(-Z_K-\ell'+\ell_0))=h^1(\cO_{\ell_0}(-Z_K+\ell_0))=h^0(\cO_{\ell_0}).$$
Since $X$ is rational, $h^1(\cO_{\ell_0})=0$, hence $h^0(\cO_{\ell_0})=\chi(\ell_0)$.
In conclusion (using $ \chi(Z_K+\ell'-\ell_0)=\chi(\ell_0-\ell')$ and $(\ell_0,\ell')=0$) we obtain
\begin{equation*}\label{eq:frakH}\begin{split}
\mathfrak{h}_{h,I'}(Z_K+\ell')&=\chi(\ell_0-\ell')-\chi(\ell_0)-\chi(r_h)+h^1(\cO_{{\tilde X}}(-r_h))\\
&=\chi(-\ell')-\chi(r_h)+h^1(\cO_{{\tilde X}}(-r_h)).\end{split}\end{equation*}
This applied for $I'=I$ and $I'=V$ gives the statement.
\end{proof}

In fact, based on the above proof, we can also conclude in the next corollary that for $(X,0)$ rational the surgery formula~\eqref{eq:surgform},
as well as the quasi-polynomiality of the counting function (Corollary~\ref{cor:sharp}\ref{cor:sharp2}), extends to the entire $Z_K+{\mathcal S}'$.

\begin{cor}
\label{cor:surgform}
If $X$ is rational, then $Q^{\Gamma}_h(Z_K+\ell')=Q^{\Gamma}_{h,I}(Z_K+\ell')$ for
$\ell'\in {\mathcal S}'$, $h=[Z_K+\ell']$, and $I=\Supp^*(\ell')$.

Moreover, in the case of the surgery formula~\eqref{eq:surgform} one has $Q^{\Gamma_k}_{[j^*_k(Z_K+\ell')]}(j^*_k(Z_K+\ell'))=0$. In particular,
 ~\eqref{eq:surgform} holds for ~$Z_K+{\mathcal S}'$.
\end{cor}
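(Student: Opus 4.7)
The plan is to derive the first identity $Q^{\Gamma}_h(Z_K+\ell')=Q^{\Gamma}_{h,I}(Z_K+\ell')$ directly from Theorem~\ref{thm:eqratsing}: upon substituting the definitions~\eqref{def:ktop} of $\kappa_\Gamma$ and $\kappa_{\Gamma,I}$ into the chain~\eqref{eq:eqratsing}, in particular into $\kappa_\Gamma(\ell')=\kappa_{\Gamma,\Supp^*(\ell')}(\ell')$, one reads off exactly the desired equality with $h=[Z_K+\ell']$ and $I=\Supp^*(\ell')$.

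For the surgery terms, the first step is to compute $j^*_k(\ell')$. Since $\Supp^*(\ell')=I$, the anti-nef cycle $\ell'$ expands as $\ell'=\sum_{v\in I}a_vE^*_v$ with $a_v\geq 0$, and each connected component $\Gamma_k$ of $\Gamma\setminus I$ satisfies $V(\Gamma_k)\cap I=\emptyset$. Hence the projection $j^*_k$ annihilates every $E^*_v$ appearing in $\ell'$, giving $j^*_k(\ell')=0$, and combining with~\eqref{eq:proj\ZK} yields $j^*_k(Z_K+\ell')=Z_K(\Gamma_k)$, a cycle independent of~$\ell'$.

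The crucial point is then to show $Q^{\Gamma_k}_{[Z_K(\Gamma_k)]}(Z_K(\Gamma_k))=0$ for each~$k$. Being a connected subgraph of the rational graph $\Gamma$, the graph $\Gamma_k$ is itself the dual graph of a rational surface singularity $(X_k,0)$, because Artin's criterion $\chi(\ell)\geq 1$ for every effective integer cycle is inherited under the isometric inclusion $j_k:L(\Gamma_k)\hookrightarrow L(\Gamma)$ (and $\chi$ is preserved by the projection formula together with~\eqref{eq:proj\ZK}). Applying Theorem~\ref{thm:eqratsing} to $(X_k,0)$ at the trivial cycle $\ell'=0$ (for which $\Supp^*(0)=\emptyset$) identifies $\kappa_{\Gamma_k}(0)$ with $\kappa_{X_k}(0)$, and the formula of Theorem~\ref{thm:mainkappa}, extended to the degenerate case $\ell'_C=0$, yields $\kappa_{X_k}(0)=\chi(0)-\chi(s_0)=0$. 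By~\eqref{def:ktop} this translates into the required vanishing $Q^{\Gamma_k}_{[Z_K(\Gamma_k)]}(Z_K(\Gamma_k))=0$.

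Putting everything together, at $x=Z_K+\ell'$ with $\ell'\in\cS'$ the surgery formula~\eqref{eq:surgform} degenerates: its left hand side and the first summand on the right coincide by the first identity, while every surgery correction vanishes by the previous step. Since Theorem~\ref{thm:surgform} is known a priori only for $a_v\gg 0$, this reasoning automatically extends~\eqref{eq:surgform} to the whole region $Z_K+\cS'$. The main obstacle I foresee is the careful justification of $\kappa_{X_k}(0)=0$: Theorem~\ref{thm:mainkappa} is phrased for reduced, hence nonempty, curve germs, so one must either inspect its proof at the edge case $\ell'_C=0$, or invoke the convention that the counting function with empty index set (the empty sum in~\eqref{eq:count1}) is zero, reading $\kappa_{\Gamma_k,\emptyset}(0)=0$ accordingly.
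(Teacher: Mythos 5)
Your proof is correct, but for the crucial vanishing step it takes a genuinely different route from the paper, and it is worth comparing. To show $Q^{\Gamma_k}_{[Z_K(\Gamma_k)]}(Z_K(\Gamma_k))=0$ you identify this quantity with $\kappa_{\Gamma_k}(0)$ and then invoke Theorem~\ref{thm:eqratsing} at $\ell'=0$ (so $\Supp^*(0)=\emptyset$) to reduce to $\kappa_{\Gamma_k,\emptyset}(0)=0$. The paper instead applies Corollary~\ref{olddual} (the $\ell'_0=0$, $h=0$ specialization of the twisted duality) to rewrite $Q^{\Gamma_k}_{[Z_K(\Gamma_k)]}(Z_K(\Gamma_k))$ as $\pc^{\cS'_\RR(\Gamma_k)}(Z^{\Gamma_k}_0(\tt))$, and then uses the CDGZ-identity and $\pc(P_0(\tt))=p_g(X_k)=0$ for the rational subgraph $\Gamma_k$. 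Your argument is more self-contained in that it avoids the duality theorem altogether, staying inside the $\kappa$-framework of Section~3; the paper's version buys conceptual clarity by linking the vanishing directly to the geometric genus, which is in the spirit of Section~\ref{sec:pg}.

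Concerning the obstacle you flag at the end: you do not actually need to extend Theorem~\ref{thm:mainkappa} to the degenerate case $\ell'_C=0$ (and it is better not to, since that theorem is phrased for genuine reduced curve germs). Your own fallback is already the cleaner and fully sufficient route: by definition~\eqref{def:ktop}, $\kappa_{\Gamma_k,\emptyset}(0)=Q_{[Z_K(\Gamma_k)],\emptyset}(Z_K(\Gamma_k))$, and for $\I=\emptyset$ the condition $\ell'_\emptyset\ngeq x_\emptyset$ in~\eqref{eq:count1} is never satisfied, so the sum is empty and equals $0$. Combined with the chain $\kappa_{\Gamma_k}(0)=\kappa_{\Gamma_k,\emptyset}(0)$ from Theorem~\ref{thm:eqratsing} (whose proof does cover $I'=\emptyset$, cf.~its final line ``applied for $I'=I$ and $I'=V$''), this closes the argument without any appeal to Theorem~\ref{thm:mainkappa}. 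Your computation $j^*_k(\ell')=0$ and the Artin-criterion argument for rationality of the subgraphs $\Gamma_k$ are the same as the paper's; the first identity of the corollary is likewise read off from Theorem~\ref{thm:eqratsing} (whose proof is what the paper refers to by ``the proof above'').
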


\begin{proof}
In the proof above one shows $\mathfrak{h}_h(Z_K+\ell')=\mathfrak{h}_{h,I}(Z_K+\ell')$.
By Theorem~\ref{thm:CDGZ}, this implies $Q^{\Gamma}_h(Z_K+\ell')=Q^{\Gamma}_{h,I}(Z_K+\ell')$.
It remains to verify the vanishing of the correction term
$Q^{\Gamma_k}_{[j^*_k(Z_K+\ell')]}(j^*_k(Z_K+\ell'))$.

Note that $j^*_k(\ell')=0$ by the choice of the set $I$ and the definition of the dual operators~$j_k^*$.
Thus, using~\eqref{eq:proj\ZK} one has
$Q^{\Gamma_k}_{[j^*_k(Z_K+\ell')]}(j^*_k(Z_K+\ell'))=Q^{\Gamma_k}_{[Z_K(\Gamma_k)]}(Z_K(\Gamma_k))$.
Moreover, the duality~\cite[Theorem 4.4.1]{LNNdual} (see also Corollary~\ref{olddual}) for counting
functions gives the identity
$$Q^{\Gamma_k}_{[Z_K(\Gamma_k)]}(Z_K(\Gamma_k))=\pc^{\cS'_{\mathbb{R}}(\Gamma_k)}(Z^{\Gamma_k}_0(\tt)),$$
where $\cS'_{\mathbb{R}}(\Gamma_k)$ is the real Lipman cone and $Z^{\Gamma_k}_0(\tt)$
is the $0$-part of the topological Poincar\'e series associated with $\Gamma_k$. Then
$\Gamma_k$ (being a subgraph of a rational graph) is rational, for all $k$, and hence
$Z^{\Gamma_k}_0(\tt)=P^{\Gamma_k}_{0}(\tt)$ and
$\pc^{\cS'_{\mathbb{R}}(\Gamma_k)}(Z^{\Gamma_k}_0(\tt))=p_g(X_k)=0$.
\end{proof}

\section{A formula for $\delta$ via periodic constants}
\label{sec:formula-delta}

\subsection{The formula} Let $C=\bigcup_{i\in \I}C_i$ be the irreducible decomposition (with finite set $\I$) of a reduced
curve germ and consider $C_J=\bigcup_{j\in J} C_j$ for any $\emptyset\neq J\subset \I$. As in
section~\ref{sec:relabsP} one defines the Hilbert series
$H_{C_J}(\tt_J):=\sum_{\ell_J}\mathfrak{h}_{C_J}(\ell_J)\tt^{\ell_J}_J$, and following~\eqref{eq:coefC}
we also consider the Poincar\'e series
$P_{C_J}(\tt_J):=\sum_{\ell_J}\mathfrak{p}_{C_J}(\ell_J)\tt^{\ell_J}_J$ of any such~$C_J$.

 The aim of this section is to prove the following formulae for the delta invariant of $(C,0)$ in terms of the 
 periodic constants associated with these objects.

\begin{thm}\label{thm:delta-sec}
\mbox{}
\begin{enumerate}[leftmargin=3cm,itemindent=.5cm,labelwidth=\itemindent,labelsep=.5cm,align=left,label={$(\alph*)$}]
 \item For any $i\in I$ one has $\delta(C_i)= -{\rm pc}(P_{C_i}(t_i))$, and
 \item \label{eq:deltapc}
$\delta(C)=
\sum_{i\in I}\delta(C_i) +
\sum_{J\subset \I, \ |J|>1} (-1)^{|J|} \ P_{C_{J}}(1_{J}).$
\end{enumerate}
\end{thm}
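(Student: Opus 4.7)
The plan for part (a) is a direct computation. When $|I|=1$, Lemma~\ref{lem:hilbinc} specializes to the statement that $\mathfrak{h}_{C_i}(\ell+1)-\mathfrak{h}_{C_i}(\ell)\in\{0,1\}$, with value $1$ exactly when $\ell\in S_{C_i}$. Since
\[
P_{C_i}(t)=-H_{C_i}(t)(1-t^{-1})=\sum_{\ell\geq 0}\bigl[\mathfrak{h}_{C_i}(\ell+1)-\mathfrak{h}_{C_i}(\ell)\bigr]t^{\ell},
\]
this identifies $P_{C_i}(t)$ with the characteristic generating series $\sum_{\ell\in S_{C_i}}t^{\ell}$ of the value semigroup $S_{C_i}\subset\ZZ_{\geq 0}$. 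Combining the classical formula $\delta(C_i)=|\ZZ_{\geq 0}\setminus S_{C_i}|$ for a branch with Example~\ref{ex:pc}\ref{ex:pc3} yields $\mathrm{pc}(P_{C_i}(t))=-\delta(C_i)$.

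For part (b), the central ingredient is the auxiliary identity
\[
(\star)\qquad P_C(1_I)=\sum_{\emptyset\neq J\subset I}(-1)^{|J|}\,\delta(C_J), \qquad |I|\geq 2.
\]
To derive $(\star)$, I expand via~\eqref{eq:coefC} as $\mathfrak{p}_C(\ell)=(-1)^{|I|+1}\Delta^{\otimes}\mathfrak{h}_C(\ell)$ with $\Delta^{\otimes}=\prod_{i\in I}(T_i-\mathrm{id})$ the iterated forward-difference operator. Since $P_C$ is a polynomial (Lemma~\ref{lem:Ppoly}), the sum $P_C(1_I)=\sum_{\ell}\mathfrak{p}_C(\ell)$ is finite, and telescoping over a cube $[0,N]^I$ with $N\gg 0$ yields
\[
P_C(1_I)=-\sum_{J\subset I}(-1)^{|J|}\mathfrak{h}_C(v_J),
\]
where $v_J$ is the vertex with coordinate $N+1$ at positions in $J$ and $0$ elsewhere. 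The natural surjection $\mathcal{O}_{C,0}\twoheadrightarrow\mathcal{O}_{C_J,0}$ (restriction to the sub-curve) identifies $\mathfrak{h}_C(v_J)=\mathfrak{h}_{C_J}((N+1)\cdot 1_J)$, and~\eqref{eq:sgpid} gives $|J|(N+1)-\delta(C_J)$ for $N\gg 0$. The $N$-dependent contribution cancels via the elementary identity $\sum_{\emptyset\neq J\subset I}(-1)^{|J|}|J|=0$, which holds precisely because $|I|\geq 2$, producing $(\star)$.

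Having proved $(\star)$, part~(b) follows by M\"obius inversion on the subset lattice of $I$. Setting $\mathrm{pc}(P_{C_j}):=-\delta(C_j)$ for singletons (by part~(a)) and $\mathrm{pc}(P_{C_J}):=P_{C_J}(1_J)$ for $|J|\geq 2$ (Example~\ref{ex:pc}\ref{ex:pc1}), the identity $(\star)$ extends uniformly as $\mathrm{pc}(P_{C_J})=\sum_{\emptyset\neq K\subset J}(-1)^{|K|}\delta(C_K)$ for every non-empty $J\subset I$; inverting gives
\[
\delta(C)=\sum_{\emptyset\neq J\subset I}(-1)^{|J|}\mathrm{pc}(P_{C_J})=\sum_{i\in I}\delta(C_i)+\sum_{J\subset I,\,|J|>1}(-1)^{|J|}P_{C_J}(1_J),
\]
which is the stated formula. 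The main technical point is the identification $\mathfrak{h}_C(v_J)=\mathfrak{h}_{C_J}((N+1)\cdot 1_J)$ in the derivation of $(\star)$: one must verify both the surjectivity of $\mathcal{O}_{C,0}\to\mathcal{O}_{C_J,0}$ (classical for reduced curve germs, since $\mathcal{O}_{C,0}\to\mathcal{O}_{C_J,0}$ is induced by the inclusion $I_{C_J}\cap I_{C_{J^c}}\subset I_{C_J}$ of ideals) and that $\mathcal{F}_C(v_J)$ maps onto $\mathcal{F}_{C_J}((N+1)\cdot 1_J)$ under it, which follows from lifting in the value semigroup guaranteed by Lemma~\ref{lem:hilbinc}.
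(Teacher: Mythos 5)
Your proof is correct, and it takes a genuinely different route from the paper. The paper's proof is a direct application of the Gorsky--N\'emethi inversion formula (equation~\eqref{eq:inv}, quoted from~\cite[Theorem~3.4.3]{GN}), which expresses $\mathfrak{h}_C(\ell)$ as an alternating sum of counting functions of the $P_{C_J}$; evaluating at any $\ell\geq c$, substituting $\mathfrak{h}_C(\ell)=|\ell|-\delta(C)$ from~\eqref{eq:sgpid}, and observing that the $|J|>1$ blocks stabilize at $P_{C_J}(1_J)$ while the $|J|=1$ blocks give $\ell_i-\delta(C_i)$, everything cancels to the stated formulae at once. You instead stay entirely with the \emph{forward} relation~\eqref{eq:coefC}, recast it as $\mathfrak{p}_C=(-1)^{|I|+1}\Delta^{\otimes}\mathfrak{h}_C$, telescope $\Delta^{\otimes}$ over the cube $[0,N]^I$ to land on the auxiliary identity $(\star)$, and then recover $\delta(C)$ by M\"obius inversion on the Boolean lattice of $I$. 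Your approach has the advantage of being self-contained --- you essentially re-derive the only instance of the GN inversion you actually need via elementary telescoping, so the appeal to~\cite{GN} is avoided entirely --- and it makes part~$(a)$ fully explicit (identifying $P_{C_i}$ with the semigroup generating series via Lemma~\ref{lem:hilbinc} and Example~\ref{ex:pc}\eqref{ex:pc3}), which the paper effectively treats as known. The computations you flag (vanishing of $\sum_{J}(-1)^{|J|}|J|$ for $|I|\geq 2$, and $\mathfrak{h}_C(v_J)=\mathfrak{h}_{C_J}((N+1)\cdot 1_J)$ via the restriction surjection $\cO_{C,0}\twoheadrightarrow\cO_{C_J,0}$) are indeed the crucial technical points, and they check out; the latter is the same compatibility the paper records as property~(a)/(b) before applying the GN formula. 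The tradeoff is that the paper's argument is a few lines once~\eqref{eq:inv} is granted, whereas yours spends more effort on the telescoping and the M\"obius step --- but conceptually your version makes visible exactly where the polynomiality (Lemma~\ref{lem:Ppoly}) and the conductor-range behaviour~\eqref{eq:sgpid} enter.
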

Note that even though the proof of Theorem~\ref{thm:delta-sec} will be heavily based on the concept of periodic constant,
in the recursive identity~\ref{eq:deltapc} its presence is completely hidden, even missing.

We would like to emphasize that the delta invariant can be determined recursively from the delta invariant of
the components and the \textit{Hironaka generalized intersection multiplicity} as well. Namely,
if we consider two (not necessarily irreducible) germs $(C_1',0)$ and $(C_2',0)$ without common irreducible components, 
embedded in some $(\CC^n,0)$ such that $(C_i',0)$ is defined by the ideal $I_i$ in $\cO_{(\CC^n,0)}$ ($i=1,2$),
then \textit{Hironaka's intersection multiplicity} is defined by $(C_1',C_2')_{Hir}:=\dim _{\CC}\, (\cO_{(\CC^n,0)}/ I_1+I_2)$. 
Then, one has the following \textit{Hironaka's formula}~\cite{Hironaka} (see also~\cite[Sect.~5]{kappa} for a detailed discussion)
\begin{equation}\label{eq:delta12}
\delta(C)=\sum_{i\in I} \delta(C_i)+ \sum_{i=1}^{r-1} (C_i,C^i)_{\text{Hir}},
\end{equation}
where $C^i:=\cup_{j=i+1}^rC_j$.

Formally~\eqref{eq:deltapc} has some similarity with~\eqref{eq:delta12} but their equivalence is not apparent.
Nevertheless, our proof will be independent of~\eqref{eq:delta12}.

Before we present the proof, in the next section we discuss the formula for two key families:
plane curves and ordinary $r$-tuples of curves in $\CC^r$. In a way,
they constitute the two {\it opposite} extreme cases. In our discussion we will stress
the major differences between them.

\subsection{Special cases}\label{ss:spec}
\begin{exam}
Assume $(C,0)\subset (\CC^2,0)$ is an irreducible plane curve singularity.
Then the identity $\delta(C)=-\pc(P_{C}(t))$ is well known, but in a different form,
see eg.~\cite{CDGZ-Moncurve} and~\cite[Section 7.1]{LSzMonoids}. Indeed, by~\cite{CDGZ-Moncurve}
the Poincar\'e series $P_{C}(t)$ equals the generating series of the numerical semigroup associated with $(C,0)$,
which implies the statement by Example~\ref{ex:pc}\eqref{ex:pc3}: the negative of the periodic constant equals
the number of gaps, hence the delta invariant of the curve.
\end{exam}
\begin{exam}
Assume that $C=\cup_{i\in I} C_i$ ($I=\{1,\dots, r\}$) is a plane curve singularity. We recall two basic
properties, both of them specific to the case of plane curves.

First, in this context there exists the notion of Alexander invariant associated with
the covering $\CC^2\setminus C$ (since this space has non-trivial fundamental group),
hence one can define the (multivariable) Alexander polynomial $\Delta(t_1,\ldots, t_r)$ of $C$.
Then, see eg.~\cite{cdg,cdg2}, $P_{C_i}(t_i)=\Delta_{C_i}(t_i)/(1-t_i)$ and
$P_C(t_1, \ldots t_r)=\Delta(t_1,\dots, t_r)$ whenever $r\geq 2$.

Second, there is an inductive Torres formula of type~\cite{torres}
 \begin{equation}\label{eq:redP}
P_{C_{I\setminus \{1\}}}(t_2,\ldots,t_r)=
P_C(t_1,\ldots,t_r)|_{t_1=1}\cdot (1-t_2^{(C_1,C_2)}\cdots t_r^{(C_1,C_r)})^{-1}.
\end{equation}
Here $(C_i,C_j)$ denotes the usual intersection multiplicity (which in the case of plane curves
coincides with Hironaka's definition). Since $\Delta_{C_i}(1)=1$, Torres' formula applied to
$C_{\{{i,j}\}}$ proves $P_{{C_{\{i,j\}}}}(1,1)=(C_i,C_j) $ ($i\not=j$), and, again by
Torres formula applied for $|J|\geq 3$ we get that $P_{C_J}(1_J)=0$ if $|J|\geq 3$.
In particular,~\eqref{eq:deltapc} gives the well-known formula, valid for plane curves,
\begin{equation}\label{eq:deltaplanecurves}
\delta(C)=\sum_i\delta(C_i)+\sum_{i<j} (C_i,C_j).
\end{equation}
This can be deduced from Hironaka's formula~\eqref{eq:delta12} as well, since for plane curves
$(C',C_1\cup C_2)=(C',C_1)+(C',C_2)$, a property which usually
fails for non-plane germs in the general context of Hironaka's intersection multiplicity.

For certain properties in the Gorenstein case see e.g. \cite{Moy}.
\end{exam}

\begin{exam}
Let $(C,0)$ be analytically equivalent to a union of $r$ coordinate axes in $\CC^r$, that is, $(C,0)$ is an ordinary
$r$-tuple, then a straightforward computation shows $P_{C_J}(1_J)=1$ if $|J|\geq 2$. Since $\delta(C_i)=0$
for all $i$, Theorem~\ref{thm:delta-sec}\ref{eq:deltapc} reads as $\delta(C)=\sum_{j=2}^r (-1)^j\binom{r}{j}=r-1$.
\end{exam}

\subsection{Proof of Theorem~\ref{thm:delta-sec}}

The proof of Theorem~\ref{thm:delta-sec} is based on the
 inversion formula of Gorsky and N\'emethi~\cite{GN}, and Moyano-Fern\'andez~\cite{Moy},
which recovers the Hilbert function $\mathfrak{h}_C$ from the Poincar\'e series $P_{C_J}$, thus \textit{inverts}
formula~\eqref{eq:coefC} (and by its \textit{formal} proof, it is valid for any curve germ $(C,0)$ as well).

Note that by definition (see section~\ref{sec:relabsP}) if $\ell_i=0$ for all $i\notin J$,
then $\mathfrak{h}_{C_J}(\ell_J)=\mathfrak{h}_C(\ell_I)$, i.e. $H_{C_J}(\tt_J)=H_{C}(\tt_I)|_{t_i=1,i\notin J}$.
It also makes sense to write $\mathfrak{h}_C(\ell_J)$ so that $\ell_J$ is extended to a vector $\ell_I$ by setting
its entries indexed by $I\setminus J$ to be zero.

Then, the Hilbert function satisfies the following properties:
\begin{itemize}
 \item[(a)] $\mathfrak{h}_C(\ell_I)=\mathfrak{h}_C(\mathrm{max}\{\ell_I,0\})$;
 \item[(b)] $\mathfrak{h}_C(0)=0$;
 \item[(c)] for any $J\subset I$ we have
 $\mathfrak{p}_{C_J}(\ell_J)=\sum_{I'\subset I}(-1)^{|I'|-1} \mathfrak{h}_{C}(\ell_J+1_{I'})$.
\end{itemize}
Therefore~\cite[Theorem 3.4.3]{GN} applies and it implies the following inversion formula:
\begin{equation}\label{eq:inv}
\mathfrak{h}_C(\ell_I)=\sum_{J\subset I} (-1)^{|J|-1}
\sum_{0\leq \tilde \ell \leq \ell_J-1_J}\mathfrak{p}_{C_J}(\tilde \ell).
\end{equation}

\begin{proof}[Proof of Theorem~\ref{thm:delta-sec}]
By Lemma~\ref{lem:Ppoly}, $P_{C_J}(\tt_J)$ is a polynomial for $|J|>1$, hence for \textit{big enough} $\ell_J$,
the constant sum $\sum_{0\leq \tilde \ell \leq \ell_J-1_J}\mathfrak{p}_{C_J}(\tilde \ell)$
is the periodic constant and equals $P_{C_J}(1_J)$, cf. Example~\ref{ex:pc}~\eqref{ex:pc1}.

If $J=\{i\}$ for some $i\in I$, then the counting function is
$\sum_{0\leq \tilde \ell \leq \ell_i-1}\mathfrak{p}_{C_i}(\tilde \ell)$ whose constant term is the periodic
constant for $\ell_i\gg 0$. On the other hand, by~\eqref{eq:inv} and~\eqref{eq:sgpid} for $\ell\geq c$,
where $c$ is the conductor, we get
$$\delta(C)=|\ell|+\sum_{\smallmatrix J\subset I \\ |J|>1 \endsmallmatrix} (-1)^{|J|} P_{C_{J}}(1_{J})-\sum_{i\in I}
\sum_{0\leq \tilde \ell \leq \ell_i-1}\mathfrak{p}_{C_{i}}(\tilde \ell).$$
Since the left hand side of the above identity is constant the result follows.
\end{proof}

\section{Twisted duality for the topological Poincar\'e series}
\label{sec:twisted-duality}

\subsection{Ehrhart-MacDonald-Stanley duality for rational functions}

We will use~\cite{LNNdual} as a general reference for this section, where more details can be found.
\subsubsection{}
We fix a free $\ZZ$-module $L$ and an overlattice $L'\supset L$ of the same rank and denote by $H$ the finite quotient
group $L'/L$ of order, say $d$. Let us fix a basis $\{E_v\}_{v\in\V}$ in $L$.
In the sequel we will adapt the notation of sections~\ref{sec:lattice} and~\ref{ss:set} to this more general context.

We consider the following type of multivariable rational functions (in the variables $\tt^{L'}$) with rational exponents
in $\left(\frac{1}{d}\ZZ\right)^{|\V|}$:
\begin{equation}\label{eq:func}
z(\tt)=\frac{\sum_{k=1}^r\iota_k\tt^{b_k}}{\prod_{i=1}^n
(1-\tt^{a_i})},
\end{equation}
where $\iota_k \in\mathbb{Z}$, $b_k\in L'$, and $a_i=\sum_{v\in\V}a_{i,v}E_v\in L'$, such that $a_{i,v}$ are all strictly
positive.

Let $Tz(\tt)=\sum_{\ell'}z(\ell')\tt^{\ell'}\in \ZZ[[\tt^{1/d}]][\tt^{-1/d}]$ be the formal Taylor expansion of $z(\tt)$
at the origin and we also write the Taylor expansion of $z(\tt)$ at infinity in the form
$$T^{\infty}z(\tt)=\sum_{\tilde{\ell}}z^{\infty}(\tilde{\ell})\tt^{\tilde{\ell}}\in \ZZ[[\tt^{-1/d}]][\tt^{1/d}],$$
where $T^\infty z$ is obtained by
the substitution $\mathbf{s}=1/\tt$ into the Taylor expansion at $\mathbf{s}=0$ of the function $z(1/\mathbf{s})$.

The function $z$ has a decomposition $\sum_{h\in H}z_h(\tt)$ with respect to $H=L'/L$, where $z_h(\tt)$
is rational of the form $\sum_{b'\in h+L}\iota_{b'}\tt^{b'}/\prod_{i=1}^n (1-\tt^{da_i})$ ($\iota_{b'}\neq 0$ for finite $b'$).
The decompositions $\sum_h (Tz)_h$ and $\sum_h (T^{\infty}z)_h$ of the series $Tz$ and $T^{\infty}z$ are defined similarly as  in section~\ref{ss:set}.

Once $h\in H$ fixed, for any subset $\I\subset \V$ we consider the reduced functions $z_h(\tt_{\I})$
and their series $(Tz)_h(\tt_{\I})$ and $(T^{\infty}z)_h(\tt_{\I})$ by substituting $t_i=1$ for all $i\notin \I$
(cf. section~\ref{ss:set}).

\subsubsection{\bf Quasi-polynomials and duality}
Let $q^{(z)}_{h,\I}$ be the modified counting function associated with $(Tz)_h(\tt_{\I})$ as defined in~\eqref{eq:modcount1}.
Following the results of~\cite{LNN,LSzPoincare}, by the Ehrhart theory of polytopes associated with the denominator of $z$
one can consider the following chamber decomposition of ${\rm pr}_\I(L'\otimes {\mathbb R})={\mathbb R}^{|\I|}$:
let $\mathcal{B}_\I$ be the set of all bases $\sigma$ of ${\mathbb R}^{|\I|}$ such that $\sigma$ is a subset of the given 
configuration of vectors 
$\{a_i|_\I\}_{i=1,\dots,n}\cup \{E_v|_I\}_{v\in\I}$ in ${\mathbb R}^{|\I|}$ 
(here we use notation $a_i|_\I:={\rm pr}_\I(a_i)$ as in section \ref{ss:set}).
Then a (big, open) chamber $\mathfrak{c}$ is a connected component of
${\mathbb R}^{|\I|}\setminus \cup_{\sigma\in\mathcal{B}_\I}\partial\R_{\geq 0}\sigma$, where
$\partial\R_{\geq 0}\sigma$ is the boundary of the closed cone~$\R_{\geq 0}\sigma$.

Then, by~\cite[Corollary 3.5.1]{LNNdual} it is known that for any fixed chamber $\mathfrak{c}$ the
modified counting function $q^{(z)}_{h,\I}$ admits a quasi-polynomial in the sense of section~\ref{ss:pc}, namely
$\qp{q}^{\, \mathfrak{c},(z)}_{h,\I}$ satisfying
$\qp{q}^{\, \mathfrak{c},(z)}_{h,\I}(\ell)=q^{(z)}_{h,\I}(r_h+\ell)$
for any $\ell$ \textit{sitting sufficiently deeply in the interior of $\mathfrak{c}$}. Thus, $(Tz)_h$
(often referred to as $z_h$) admits a periodic constant
$\mathrm{mpc}^{\mathfrak{c}}(z_h(\tt_{\I}))=\qp{q}^{\, \mathfrak{c},(z)}_{h,\I}(0)$
associated with the chamber $\mathfrak{c}$. Moreover, the next duality result from~\cite{LNNdual} shows that (under some
conditions) the modified periodic constant associated with a chamber $\mathfrak{c}$ coincides with a finite sum of certain
coefficients of the Taylor expansion at infinity.

\begin{thm}[{\cite[Th.~3.6.1]{LNNdual}}]\label{thm:mpc}
Fix $h$ and $\I$ as above. Denote by
$(T^{\infty}z)_h(\tt_\I)=\sum_{\ell'}z^{\infty}_{h,\I}(\ell')\tt_\I^{\ell'}$
the $h$-component of the Taylor expansion of $z(\tt)$ at infinity.
Assume that for a fixed chamber $\mathfrak{c}$ one has
$b_k|_\I\in\mathfrak{c}$ for all $k$.
Then
\begin{equation}\label{eq:mpc}
\mathrm{mpc}^{\mathfrak{c}}(z_h (\tt_\I))=\sum_{\ell'\geq 0}z^{\infty}_{h,\I}(\ell').
\end{equation}
\end{thm}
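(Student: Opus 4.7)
The plan is to reduce~\eqref{eq:mpc} to the classical Ehrhart-MacDonald-Stanley reciprocity for rational polytopes, lifted to multivariable generating functions in the spirit of Brion's cone calculus. First I would exploit the $\ZZ$-linearity of both sides in the numerator $\sum_k\iota_k\tt^{b_k}$, so that it suffices to treat the elementary case $z(\tt)=\tt^{b}/\prod_{i=1}^{n}(1-\tt^{a_i})$ for a single $b\in L'$ with $b|_\I\in\mathfrak{c}$. The $H$-decomposition $z=\sum_h z_h$ and the reduction $z_h\mapsto z_h(\tt_\I)$ commute with $T$, with $T^{\infty}$, and with this linearity, so everything takes place inside $\ZZ[[L'_\I]]$ and its Laurent counterpart.

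Next I would read off both sides as lattice-point counts. Expanding $Tz$ at the origin yields the cone series $\sum_{n\in\NN^{n}}\tt^{b+\sum_i n_i a_i}$; after projection and restriction to the class $h$, the modified counting function $q^{(z)}_{h,\I}(x)$ enumerates those tuples $n\in\NN^{n}$ with $(b+\sum_i n_ia_i)|_\I\prec x|_\I$ lying in the fibre of $L'_\I\to H_\I$ over $h_\I$. Provided $x$ lies deep inside $\mathfrak{c}$, these strict inequalities cut out a rational closed polytope $\Delta_x\subset\RR^{n}$ whose combinatorial type is constant along $\mathfrak{c}$, and the count equals the Ehrhart quasi-polynomial of $\Delta_x$ restricted to a suitable affine sublattice indexed by $h$. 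This identifies $\qp{q}^{\,\mathfrak{c},(z)}_{h,\I}$ with an honest Ehrhart quasi-polynomial of the family $\{\Delta_x\}_{x\in\mathfrak{c}}$.

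For the Taylor expansion at infinity, the substitution $\tt\to\tt^{-1}$ converts each factor $(1-\tt^{a_i})^{-1}$ into $-\tt^{-a_i}(1-\tt^{-a_i})^{-1}$, so that $T^{\infty}z=(-1)^{n}\sum_{n\in\NN^{n}}\tt^{-b-\sum_i(n_i+1)a_i}$. Selecting the class $h$, projecting to $\I$, and keeping only those exponents with $\ell'\geq 0$ picks out precisely the lattice points in the relative interior of the reflected polytope, weighted by the sign $(-1)^{n}=(-1)^{\dim\Delta}$. Now the Ehrhart-MacDonald-Stanley reciprocity in the form $\qp{L}_\Delta(-k)=(-1)^{\dim\Delta}\qp{L}_{\Delta^{\circ}}(k)$, applied at $k=0$ to the family $\{\Delta_x\}$, together with $\qp{q}^{\,\mathfrak{c},(z)}_{h,\I}(0)=\mathrm{mpc}^{\mathfrak{c}}(z_h(\tt_\I))$, reads exactly as $\mathrm{mpc}^{\mathfrak{c}}(z_h(\tt_\I))=\sum_{\ell'\geq 0}z^{\infty}_{h,\I}(\ell')$, which is~\eqref{eq:mpc}.

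The main obstacle is the careful bookkeeping of open versus closed polytopes, signs and chambers: one has to check that the condition $\ell'_\I\prec x_\I$ translates to a \emph{closed} polytope on the $T$-side while $\ell'\geq 0$ translates to an \emph{open} polytope on the $T^{\infty}$-side, and that the hypothesis $b_k|_\I\in\mathfrak{c}$ is precisely what guarantees combinatorial constancy of $\{\Delta_x\}_{x\in\mathfrak{c}}$ and applicability of the reciprocity after the shift by $-b$. A further subtlety is that the projected vectors $\{a_i|_\I\}$ may become linearly dependent, in which case one must first cancel the spurious factors of the type $(1-1)$ arising between numerator and denominator upon setting $t_v=1$ for $v\notin\I$, and then apply reciprocity on a polytope of the correct lower dimension. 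Once these reductions are performed, Ehrhart-MacDonald-Stanley reciprocity yields~\eqref{eq:mpc} term by term.
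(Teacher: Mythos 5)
This theorem is not proved in the paper you are reading: it is quoted verbatim with the citation to \cite[Th.~3.6.1]{LNNdual}, and the surrounding text only discusses the chamber decomposition, without giving an argument. So there is no in-paper proof to compare against. What the paper does say (in the introduction, when describing the results of \cite{LNNdual}) is that the duality ``transforms the Ehrhart-MacDonald-Stanley reciprocity law from the theory of lattice polytopes to the level of series,'' and your overall skeleton --- reduce by $\ZZ$-linearity to a single numerator monomial $\tt^b$, read $Tz$ and $T^{\infty}z$ as signed enumerators of lattice points of a parametric (vector-partition) polytope, and invoke reciprocity --- is indeed the right conceptual framework for that result.

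That said, as a proof your write-up has a concrete computational error and leaves the hard bookkeeping unresolved. The Taylor expansion at infinity of $\tt^{b}/\prod_i(1-\tt^{a_i})$ is
\begin{equation*}
T^{\infty}z \;=\; (-1)^{n}\sum_{m\in\NN^{n}}\tt^{\,b-\sum_{i}(m_i+1)a_i},
\end{equation*}
not $(-1)^{n}\sum_{m}\tt^{-b-\sum_i(m_i+1)a_i}$; you have the sign of $b$ wrong (starting from $(1-\mathbf{s}^{-a_i})^{-1}=-\mathbf{s}^{a_i}(1-\mathbf{s}^{a_i})^{-1}$, Taylor-expanding, and then substituting $\mathbf{s}=1/\tt$). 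With your exponent, the support of $T^{\infty}z$ would lie entirely in the negative orthant and the right-hand side of~\eqref{eq:mpc} would be identically zero, which is false already for $z(t)=t^{b}/(1-t^{a})$ with $b>a>0$. Once corrected, the condition $\ell'\geq 0$ picks out $y:=(m_i+1)\in\ZZ_{\geq 1}^{n}$ with $\sum_i y_i\,a_i|_{\I}\leq b|_{\I}$, i.e.\ lattice points of a polytope that is \emph{open} along the coordinate hyperplanes but \emph{closed} along the $a_i$-facets, whereas $q^{(z)}_{h,\I}(x)$ counts $n\in\NN^{n}$ with $\sum_i n_i\,a_i|_{\I}\prec(x-b)|_{\I}$, which is closed/open on exactly the complementary set of facets. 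You flag this open-versus-closed swap, the possible linear dependence of the $\{a_i|_\I\}$, and the role of the hypothesis $b_k|_\I\in\mathfrak{c}$, but you do not actually resolve any of them; the classical form $\qp{L}_{\Delta}(-k)=(-1)^{\dim\Delta}\qp{L}_{\Delta^{\circ}}(k)$ for a fixed dilated polytope is not what is needed here --- one needs a chambered reciprocity for half-open vector partition functions, and establishing it is essentially the content of the cited result. As a summary of the mechanism your sketch is sound, but it is not a self-contained proof: fix the sign in $T^{\infty}z$, make the half-open reciprocity precise (matching which facets are open on each side), and explain why $b_k|_\I\in\mathfrak{c}$ guarantees the parametric polytope keeps a fixed normal fan so that the quasi-polynomial and the chamber $\mathfrak{c}$ can be evaluated at $0$ and at the ``interior'' count respectively.
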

Note that the chamber decomposition depends essentially on the set of \textit{poles} $\{a_i\}_{i=1}^n$
(see the definition of $\mathcal{B}_\I$). The construction of the chambers shows that
by decreasing the set of poles one might enlarge certain chambers, which can help improve results on
periodic constant calculations (cf.~Theorem~\ref{thm:mpc} and Proposition~\ref{prop:LSz}).

\subsection{\bf The twisted zeta function}
We now return to the \RHS surface singularity case, where the dual graph $\Gamma$ is fixed. Consider the zeta function
$f(\tt)$ as defined in~\eqref{eq:1.1} and the topological Poincar\'e series $Z(\tt)$ as its Taylor expansion at the origin.
We recall some important results from~\cite{LSzPoincare}, which will be used in the sequel.

For a subset $\emptyset\neq\I\subset \V$, we define its closure $\overline{\I}$ as the set of vertices of the
\textit{connected} minimal full subgraph $\Gamma_{\overline{\I}}$ of $\Gamma$, which contains $\I$.
Note that this graph is unique since $\Gamma$ is a tree.
Denote by $\text{val}_{\overline{\I}}(v)$ the valence of a vertex $v\in \overline{\I}$ in the graph $\Gamma_{\overline{\I}}$.

Then, in~\cite[Lemma 7]{LSzPoincare} it was proved that $f(\tt_\I)$ has a product decomposition of type
\begin{equation}\label{eq:fI}
f(\mathbf{t}_\I) = {\rm Pol}(\mathbf{t}_{\I}) \cdot \prod_{v\in \overline{\I}}
 \Big( 1 -\mathbf{t}_{\I}^{E^{*}_{v}} \Big)^{\text{val}_{\overline{\I}}(v)-2}=
 {\rm Pol}(\mathbf{t}_{\I}) \cdot {\rm Prod}(\tt_I),
\end{equation}
where ${\rm Pol}(\mathbf{t}_{\I})$ is a finite sum supported on $\pi_\I(\cS')$, in particular it has no poles.
Hence, the possible set of poles of $f$ via the $\I$-reduction is reduced from the set of zeros of
$\prod_{v\in \cE} (1-\tt_\I^{E^*_v})$ to the set of zeros of
$\prod_{v\in \cE_{\overline{\I}}} (1-\tt_\I^{E^*_v})$,
where $\cE$ (resp. $\cE_{\overline{\I}}$) is the set of end-vertices of $\Gamma$ (resp. $\Gamma_{\overline{\I}}$).
Note that $\cE_{\overline{\I}}\subset \I$. Therefore, by the construction of the chamber decomposition of
${\mathbb R}^{|\I|}$ the chambers associated with $f(\tt_\I)$ can be
determined by the bases selected from the vector configuration
$\{E^{*}_{v}|_\I\}_{v\in \mathcal{E}_{\overline{\I}}}\cup \{E_u|_\I\}_{u\in \I}$.
Moreover, one can prove the following:
\begin{prop}[\cite{LSzPoincare}]
\label{prop:LSz}
For any $\I\subset\V$ the interior of the projected Lipman cone $\textnormal{int}\, (\pi_{\I}(\mathcal{S}'_{\mathbb{R}}))$
is contained entirely in a (big) chamber $\mathfrak{c}$ of $f(\tt_\I)$. Thus, the modified counting function
$q^{(Z)}_{h,\I}$ associated with $Z_h(\tt_{\I})$ admits a quasi-polynomial in $\pi_{\I}(\mathcal{S}'_{\mathbb{R}})$,
which will be denoted simply by $\qp{q}_{h,\I}$ in the sequel.
\end{prop}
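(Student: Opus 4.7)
The claim splits into two logically independent parts: the geometric assertion that $\textnormal{int}\,\pi_\I(\cS'_\RR)$ sits inside a single big chamber $\mathfrak{c}$ of $f(\tt_\I)$, and the analytic consequence that $q^{(Z)}_{h,\I}$ admits a quasi-polynomial on the whole projected Lipman cone.

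The first step is to pin down the chamber configuration using the product decomposition~\eqref{eq:fI}. After cancellation of the polynomial factor $\textnormal{Pol}(\tt_\I)$, the effective poles of $f(\tt_\I)$ lie on $\{\tt_\I^{E^*_v}=1\}$ with $v\in\cE_{\overline{\I}}$, so by the construction of chambers recalled at the start of section~\ref{sec:twisted-duality} the walls are the hyperplanes spanned by $(|\I|-1)$-subsets of the configuration $\{E^*_v|_\I\}_{v\in\cE_{\overline{\I}}}\cup\{E_u|_\I\}_{u\in\I}$. The heart of the proof is to verify that none of these walls meets $\textnormal{int}\,\pi_\I(\cS'_\RR)$. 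I would argue case by case according to the type of spanning subset:
\begin{enumerate}
\item \emph{Coordinate walls} spanned only by $\{E_u|_\I\}_{u\in\I}$ are coordinate hyperplanes $\{x_u=0\}$ in the $E$-basis. Negative definiteness of $(-,-)$ on $L$ forces each $E^*_v$ to have strictly positive $E_u$-coefficients for every $u\in\V$, so $\textnormal{int}\,\pi_\I(\cS'_\RR)$ lies in the open positive orthant and cannot reach such walls.
\item \emph{Dual-only walls}, spanned only by $\{E^*_v|_\I\}_{v\in\cE_{\overline{\I}}}$, are extensions of codimension-one faces of $\cS'_\RR$: since $(E^*_v,E_w)=-\delta_{v,w}$, each ray $\RR_{\geq 0}E^*_v$ already sits on the face $\{\ell':(\ell',E_w)=0\ \forall w\neq v\}$ of $\cS'_\RR$, so the walls support $\cS'_\RR$ and touch only its boundary.
\item \emph{Mixed walls} require the technical work: writing such a wall as $\{\ell':(c,\ell')=0\}$ for a normal $c\in L_\RR$ computed from the chosen spanning basis, one must show that $c$ or $-c$ lies in the closure of the anti-nef cone, so that $(c,E^*_v)$ has constant sign in $v$ and $\textnormal{int}\,\cS'_\RR$ sits strictly on one side.
\end{enumerate}

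Granted this, $\textnormal{int}\,\pi_\I(\cS'_\RR)$ is contained in a single big chamber $\mathfrak{c}$. For the second half of the statement I would invoke \cite[Cor.~3.5.1]{LNNdual} as recalled above: on every chamber the modified counting function admits a quasi-polynomial $\qp{q}^{\,\mathfrak{c},(Z)}_{h,\I}$, agreeing with $q^{(Z)}_{h,\I}(r_h+\,\cdot\,)$ once one is sufficiently deep in $\mathfrak{c}$. Choosing $\ell'_*\in\cS'$ deep inside $\textnormal{int}\,\pi_\I(\cS'_\RR)\subset\mathfrak{c}$ verifies the periodic-constant definition from section~\ref{ss:pc} with $\mathcal{K}=\pi_\I(\cS'_\RR)$, and yields the stated quasi-polynomial $\qp{q}_{h,\I}$.

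The main obstacle is case (iii) above. Cases (i) and (ii) are essentially formal consequences of negative definiteness and of the extremal-ray structure of $\cS'_\RR$. The mixed case, by contrast, seems to genuinely require the combinatorial input that only end-vertices of the subtree $\Gamma_{\overline{\I}}$ enter the configuration; one presumably shows that the normal $c$ to a mixed wall is itself a non-positive combination of the $E_v$ supported on $\Gamma_{\overline{\I}}$, using the tree structure recursively. This is the place where the passage from $\Gamma$ to $\Gamma_{\overline{\I}}$ in~\eqref{eq:fI} plays a non-formal role.
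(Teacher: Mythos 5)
There is no proof of this statement in the paper for you to match: Proposition~\ref{prop:LSz} is imported from \cite{LSzPoincare}, and the paper only recalls the factorization~\eqref{eq:fI} and the chamber construction as context. Judged as a self-contained argument, your proposal has a genuine gap. The second half (once the containment in a single chamber is granted, invoke \cite[Corollary~3.5.1]{LNNdual} and choose $\ell'_*$ deep inside ${\rm int}\,\pi_{\I}(\cS'_{\RR})$) is fine and is exactly the intended reduction; the problem is the containment itself, which is the real content of the proposition and which your outline does not establish.

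Your case (iii) is admitted to be open, but case (ii) is not ``essentially formal'' either: you argue with faces of the unprojected cone $\cS'_{\RR}\subset L\otimes\RR$, whereas the walls live in $\RR^{|\I|}$ and the cone to be avoided is $\pi_{\I}(\cS'_{\RR})$, which is generated by the projections $E^*_v|_\I$ of \emph{all} $v\in\V$, not only those indexed by $\cE_{\overline{\I}}$; a face of $\cS'_{\RR}$ need not project into the boundary of $\pi_{\I}(\cS'_{\RR})$. In every case involving some $E^*_u|_\I$ the difficulty is to control where the remaining generators $E^*_v|_\I$, $v\notin\cE_{\overline{\I}}$, sit relative to the candidate wall. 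For instance, for the $D_4$ star with $\I$ two of the leg-ends, the projection of $E^*$ of the third leg lands in the \emph{interior} of $\pi_{\I}(\cS'_{\RR})$ --- precisely why the reduction of the pole set to $\cE_{\overline{\I}}$ in~\eqref{eq:fI} is indispensable --- so one must know that such vectors never occur among the wall generators and that the surviving generators $\{E^*_u|_\I\}_{u\in\cE_{\overline{\I}}}\cup\{E_u|_\I\}_{u\in\I}$ never separate the family $\{E^*_v|_\I\}_{v\in\V}$. This needs the tree combinatorics of the dual cycles (e.g.\ that for $v\notin\overline{\I}$ the projection $E^*_v|_\I$ is a positive multiple of $E^*_u|_\I$ for the vertex $u\in\overline{\I}$ nearest to $v$, coming from the path formula for the products $(E^*_a,E^*_b)$), and no such input appears in your proposal; you only gesture at it in case (iii). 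A smaller inaccuracy: by the definition recalled in the paper the removed set is $\bigcup_{\sigma\in\mathcal{B}_\I}\partial\RR_{\geq 0}\sigma$, a union of codimension-one \emph{cones}, not of full hyperplanes spanned by $(|\I|-1)$-subsets, so your wall description is both stronger than required and not the one the chamber decomposition is built from.
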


Now, we consider the \textit{twisted zeta function} $f_{\ell'_0}(\tt):=\tt^{\ell'_0}\cdot f(\tt)$ for some fixed
$\ell'_0\in\cS'$ (see section~\ref{sec:tw-series}).
By~\eqref{eq:fI} the chamber decomposition associated with $f_{\ell'_0}(\tt_\I)$ is the same as the one of $f(\tt_\I)$.
Moreover, if $f(\tt_I)$ has product factorization
${\rm Pol}(\tt_I)\cdot {\rm Prod}(\tt_I)$,
then $f_{\ell'_0}(\tt_\I)={\rm Pol}_{\ell'_0}(\tt_I)\cdot {\rm Prod}(\tt_I)$, where
${\rm Pol}_{\ell'_0}(\tt_\I)=\tt^{\ell'_0}_{\I}\cdot {\rm Pol}(\tt_\I)$ is still a finite sum
and it is still supported on $\pi_\I(\cS')$ since $\ell'_0\in\cS'$.

\begin{rem}\label{cor:frel}
The discussion above allows for the extension of the previous results to the twisted case. In particular,
Theorem~\ref{thm:mpc} and Proposition~\ref{prop:LSz} also hold for the twisted zeta
function~$f_{\ell'_0}(\tt)=\tt^{\ell'_0}\cdot f(\tt)$ as well.
\end{rem}

\subsection{Periodic constants for twisted functions}\label{sec:tw-series}
Let us fix a cycle $\ell'_0\in L'$ with $[\ell'_0]=h_0$ as before.
Motivated by the previous section, it will be useful to compare invariants of a series with its \textit{ twisted series}: for any fixed series
$S(\tt)$ we set the twisted
$R(\tt):=\tt^{\ell'_0}S(\tt)$. By a straightforward calculation
\begin{equation}
\array{rcl}
\label{eq:tw-series}
R_{h+h_0}(\tt)&=&\tt^{\ell'_0}S_h(\tt)\\[0.1cm]
Q_{h+h_0}^{(R)}(x)&=&Q_h^{(S)}(x-\ell'_0).
\endarray
\end{equation}
 For any $h\in H$ we define the
\textit{dual shift $\check\ell_{0}(h)\in L'_h$ of $\ell'_0$ with respect to the class $h$} by
\begin{equation}\label{eq:dualrep}
\check\ell_{0}(h):=\ell'_0 + r_{h-h_0}.
\end{equation}
For $h=h_0$  the definition gives the cycle itself: $\check\ell_0(h_0)=\ell'_0$.
We will use the simplified notation $\check\ell_{0}$ for the dual shift $\check\ell_0(0)$.
In this case one obtains $\check\ell_{0}\in L$ satisfying $\check\ell_0=\ell'_{0}+r_{-h_0}$
(or $\check\ell_0=\lceil \ell_0'\rceil$). This rewritten in the form
$\ell'_{0}=\check\ell_0- r_{-h_0}$ can be compared with
the usual decomposition $\ell'_0=\ell_0+r_h$ for $\ell_0=\lfloor \ell'_0\rfloor\in L$.
This \textit{symmetry} (and its application in the duality Theorem~\ref{tdual})
explain the term \textit{dual}. Note that, if $h_0=0$, then $\check{\ell}_{0}=\ell_0$.

The following relations for the quasi-polynomials
associated with the (modified) counting functions are straightforward from~\eqref{eq:tw-series}:
\begin{equation}
\label{eq:tw-series2}
\qp{Q}_{h+h_0}^{(R)}(\ell)=\qp{Q}_h^{(S)}(\ell+r_{h}-\check{\ell}_0(h)), \ \ \
\qp{q}_{h+h_0}^{\, (R)}(\ell)=\qp{q}_h^{\, (S)}(\ell+r_{h}-\check{\ell}_0(h)).
\end{equation}
Note that the evaluation of the quasi-polynomial $\qp{Q}_h^{(S)}(\ell)$ at zero provides the periodic constant
of $S_h(\tt)$, and evaluation of $\qp{Q}_{h+h_0}^{(R)}(\ell)$ at zero
is the periodic constant of $\tt^{\ell'_0}S_h(\tt)$:
\begin{equation}
\label{eq:pc}
\pc^\cK(\tt^{\ell'_0}S_h(\tt))=\qp{Q}_h^{(S)}(r_{h}-\check{\ell}_0(h)),\ \ \
\mpc^\cK(\tt^{\ell'_0}S_h(\tt))=\qp{q}_h^{\, (S)}(r_{h}-\check{\ell}_0(h)).
\end{equation}
Analogously, in the \rel situation, if $\I\subset\V$, then
\begin{equation}
\label{eq:pcred}
\pc^\cK(\tt_\I^{\ell'_0}S_{h}(\tt_\I))=\qp{Q}_{h,\I}^{(S)}(r_{h}-\check{\ell}_0(h)),\ \ \
\mpc^\cK(\tt_\I^{\ell'_0}S_{h}(\tt_\I))=\qp{q}_{h,\I}^{\, (S)}(r_{h}-\check{\ell}_0(h)).
\end{equation}

Again, as mentioned after~\eqref{eq:InEx}, the \rel quasi-polynomials
$\qp{Q}_{h,\I}^{(S)}$ and $\qp{q}_{h,\I}^{\, (S)}$ can formally be applied to elements in $L$ via the
projection $L\to L_\I$.

\subsection{Twisted duality for counting functions of Poincar\'e series}\label{ss:td}
Recall that for any $\emptyset\neq\I\subset \V$, for simplicity we have denoted by $Q_{h,\I}(\ell')$
(resp. $q_{h,\I}(\ell')$) the counting function (resp. modified counting function) associated with
$Z_h(\mathbf{t}_{\I})$ for any $h\in H$. They admit quasi-polynomials $\qp{Q}_{h,\I}(\ell)$
(resp. $\qp{q}_{h,\I}(\ell)$) associated with the cone $\pi_{\I}(\cS'_{\RR})$ so that
for $\ell'=r_h+\ell\in \cS'$ with $\ell\gg 0$ in $L$ one has $Q_{h,\I}(\ell')=\qp{Q}_{h,\I}(\ell)$
(resp. $q_{h,\I}(\ell')=\qp{q}_{h,\I}(\ell)$). In particular,
$\mathrm{pc}^{\pi_{\I}(\cS'_{\mathbb{R}})}(Z_h(\tt_{\I}))=\qp{Q}_{h,\I}(0)$
(resp. $\mathrm{mpc}^{\pi_{\I}(\cS'_{\mathbb{R}})}(Z_h(\tt_{\I}))=\qp{q}_{h,\I}(0)$) by definition.

The aim of this section is to prove the following \textit{twisted duality theorem} for the counting functions of
the topological Poincar\'e series.

\begin{thm}
\label{tdual}
For any fixed $\emptyset\neq\I\subset\V$, $\ell_0'\in\cS'$ with $[\ell'_0]=h_0$ and $h\in H$ the following identities hold:
\begin{enumerate}[leftmargin=3cm,itemindent=.5cm,labelwidth=\itemindent,labelsep=.5cm,align=left,label={$(\alph*)$}]
 \item\label{tdual:1}
 $\mathrm{mpc}^{\pi_{\I}(\cS'_{\mathbb{R}})}((f_{\ell'_0})_h(\tt_{\I}))=\qp{q}_{h-h_0,\I}(r_h-\check{\ell}_{0}(h))=
 q_{[\ZK]-h+h_0,\I}(\ZK-r_h+\ell'_0)$,
 \item\label{tdual:2}
 $\mathrm{pc}^{\pi_{\I}(\cS'_{\mathbb{R}})}((f_{\ell'_0})_h(\tt_{\I}))=\qp{Q}_{h-h_0,\I}(r_h-\check{\ell}_{0}(h))=
 Q_{[\ZK]-h+h_0,\I}(\ZK-r_h+\ell'_0)$,
\end{enumerate}
where $\check{\ell}_{0}(h)$ is the dual shift of $\ell'_0$ by $h$ as defined in~\eqref{eq:dualrep}.
\end{thm}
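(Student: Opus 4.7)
The plan is to establish (a) in two steps and then derive (b) from it by means of the inclusion--exclusion identity \eqref{eq:InEx}. The two equalities in (a) arise from two independent computations of $\mpc((f_{\ell'_0})_h(\tt_\I))$. The left-hand description is a bookkeeping exercise relating the (modified) quasi-polynomial of the twisted series to that of $Z$ via the shift by $\tt^{\ell'_0}$; the right-hand description applies the Ehrhart--MacDonald--Stanley duality (Theorem \ref{thm:mpc}) to the twisted zeta function $f_{\ell'_0}$, which is legitimate thanks to Remark \ref{cor:frel} together with Proposition \ref{prop:LSz}.

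\textbf{First equality of (a).} Setting $R(\tt)=\tt^{\ell'_0}Z(\tt)$, the identities $R_h=\tt^{\ell'_0}Z_{h-h_0}$ and $Q^{(R)}_h(x)=Q^{(Z)}_{h-h_0}(x-\ell'_0)$ coming from \eqref{eq:tw-series} translate, via the definition of quasi-polynomials, into $\widetilde{q}^{(R)}_{h,\I}(\ell)=\widetilde{q}^{(Z)}_{h-h_0,\I}(\ell+r_h-\check{\ell}_0(h))$ on the cone $\pi_\I(\cS'_\RR)$. Evaluation at $\ell=0$ gives the first equality. This is essentially \eqref{eq:pcred} applied to $S=Z$, once the class shifts under multiplication by $\tt^{\ell'_0}$ are tracked carefully.

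\textbf{Second equality of (a).} From the product form of $f$ together with the tree identity $\sum_v(\text{val}(v)-2)=-2$ and the expression \eqref{eq:ZK} of $\ZK$, a direct substitution $\tt\mapsto\tt^{-1}$ in each factor yields the functional equation $f(\tt^{-1})=\tt^{E-\ZK}f(\tt)$. Therefore $T^\infty f_{\ell'_0}(\tt)=\tt^{\ZK+\ell'_0-E}Z(\tt^{-1})=\sum_{\ell'}z(\ell')\,\tt^{\ZK+\ell'_0-E-\ell'}$. Applying Theorem \ref{thm:mpc} (whose hypothesis holds by Remark \ref{cor:frel}), extracting the $h$-part and reducing to $\I$ give
\begin{equation*}
\mpc((f_{\ell'_0})_h(\tt_\I))=\sum_{\substack{[\ell']=[\ZK]+h_0-h\\(\ZK+\ell'_0-E-\ell')|_\I\geq 0}}z(\ell').
\end{equation*}
The main obstacle is now to identify this sum with $q_{[\ZK]-h+h_0,\I}(\ZK-r_h+\ell'_0)$. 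The cycle $\ZK+\ell'_0-E-\ell'$ has class $h$, so each of its $v$-components lies in $(r_h)_v+\ZZ$; combined with $(r_h)_v\in[0,1)$, the inclusive inequality ``$\geq 0$'' for such a quantity is equivalent to ``$\geq (r_h)_v$'', i.e.\ to $\ell'_v\leq(\ZK+\ell'_0-E-r_h)_v$. Since $\ell'$ and $\ZK-r_h+\ell'_0$ share the same class $[\ZK]-h+h_0$, the difference $\ell'_v-(\ZK-r_h+\ell'_0)_v$ is an integer, and the identity $(\ZK+\ell'_0-r_h)_v=(\ZK+\ell'_0-E-r_h)_v+1$ upgrades the inclusive inequality to the strict one $\ell'_v<(\ZK-r_h+\ell'_0)_v$ demanded by the $q$-counting function.

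\textbf{Part (b).} Apply the inclusion--exclusion identity \eqref{eq:InEx} at the level of quasi-polynomials: $\widetilde{Q}_{h,\I}=\sum_{\emptyset\neq J\subset\I}(-1)^{|J|+1}\widetilde{q}_{h,J}$, and likewise for $Q$ in terms of $q$. Since (a) has already been established for every subset $J\subset\I$ in place of $\I$, substituting the two resulting expressions into each side of the inclusion--exclusion formula and summing with signs $(-1)^{|J|+1}$ produces (b).
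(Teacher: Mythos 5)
Your proof is correct and follows essentially the same route as the paper's: the first equality of (a) comes from tracking the class shift under multiplication by $\tt^{\ell'_0}$ and passing to quasi-polynomials (the paper's \eqref{eq:tw-series}--\eqref{eq:pcred}), the second comes from applying Theorem~\ref{thm:mpc} to $f_{\ell'_0}$ via the symmetry $f(\tt^{-1})=\tt^{E-\ZK}f(\tt)$, and (b) follows from \eqref{eq:InEx}. The only differences are expository: you re-derive the functional equation from the tree identity and \eqref{eq:ZK} and spell out the component-by-component equivalence of the inclusive and strict inequalities, both of which the paper states without proof.
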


\begin{proof}
Part~\ref{tdual:1} implies~\ref{tdual:2} by the inclusion-exclusion principle~\eqref{eq:InEx},
hence we only have to show~\ref{tdual:1}.

Consider the twisted zeta function $f_{\ell'_0}(\tt)=\tt^{\ell'_0}\cdot f(\tt)$.
Then for any $\emptyset\neq\I\subset\V$ and $h\in H$ the Taylor expansion at the origin can be written as
$(Tf_{\ell'_0})_h(\tt_{\I})=\tt_{\I}^{\ell'_0}\cdot Z_{h-h_0}(\tt_{\I})$.
If $q^{(f_{\ell'_0})}_{h,\I}(\ell')$ denotes the modified counting function associated with
$(Tf_{\ell'_0})_h(\tt_{\I})$, then by~\eqref{eq:tw-series} one obtains
$q^{(f_{\ell'_0})}_{h,\I}(\ell')=q_{h-h_0,\I}(\ell'-\ell'_0)$.
By Remark~\ref{cor:frel}, $q^{(f_{\ell'_0})}_{h,\I}$ as well as $q_{h-h_0,\I}$ admit a quasi-polynomial
associated with $\pi_{\I}(\cS'_{\mathbb{R}})$.
Using~\eqref{eq:tw-series2} one obtains
$\qp{q}^{(f_{\ell'_0})}_{h,\I}(\ell)=\qp{q}_{h-h_0,\I}(\ell+r_h-\check{\ell}_{0}(h))$.
Note that both quasi-polynomials are associated with the special cone $\pi_{\I}(\cS'_{\mathbb{R}})$.
In particular, by~\eqref{eq:pcred} one has
\begin{equation}\label{eq:mpc1}
\mathrm{mpc}^{\pi_{\I}(\cS'_{\mathbb{R}})}((f_{\ell'_0})_h(\tt_{\I}))=
\qp{q}^{(f_{\ell'_0})}_{h,\I}(0)=\qp{q}_{h-h_0,\I}(r_h-\check{\ell}_{0}(h)).
\end{equation}

All the exponents in the numerator of $f_{\ell'_0}(\tt_{\I})$ are situated in
$\pi_{\I}(\cS'_{\RR})$ since the same holds for $f(\tt_{\I})$ and $\ell'_0\in \cS'$.
Therefore, Theorem~\ref{thm:mpc} can be applied to $f_{\ell'_0}(\tt_{\I})$ such that if the $h$-component
of the Taylor expansion at infinity of $f_{\ell'_0}(\tt_{\I})$ is written as
$(T^{\infty}f_{\ell'_0})_h(\tt_{\I})=
\sum_{\tilde{\ell}}(f_{\ell'_0}^{\infty})_{h,\I}(\tilde{\ell})\tt_{\I}^{\tilde{\ell}}$
then
\begin{equation}\label{eq:mpc2}
\mathrm{mpc}^{\pi_{\I}(\cS'_{\mathbb{R}})}((f_{\ell'_0})_h(\tt_{\I}))=
\sum_{\tilde{\ell}\geq 0}(f_{\ell'_0}^{\infty})_{h,\I}(\tilde{\ell}).
\end{equation}
On the other hand, using our previous notation  
$Tf(\tt)=Z(\tt)=\sum_{\ell'\in\cS'}z(\ell')\tt^{\ell'}$ (cf. section \ref{ss:PZ}) we can write
$$(T^{\infty}f_{\ell'_0})(\tt_{\I})=\tt_{\I}^{\ell'_0}(T^{\infty}f)(\tt_{\I})=
\tt_{\I}^{\ell'_0}\sum_{\ell'\in \cS'}z(\ell')\tt_{\I}^{\ZK-E-\ell'},$$
where the second identity follows by the symmetry $f(\tt_{\I})=\tt_{\I}^{\ZK-E} \cdot f(\tt_{\I}^{-1})$.
Thus, by~\eqref{eq:mpc1} and~\eqref{eq:mpc2} one obtains
$$\mathrm{mpc}^{\pi_{\I}(\cS'_{\mathbb{R}})}((f_{\ell'_0})_h(\tt_{\I}))=\qp{q}_{h-h_0,\I}(r_h-\check{\ell}_{0}(h))
=\sum_{\smallmatrix \ell'_{\I}\leq (\ZK-E+\ell'_0)_{\I},\\ [\ell']=[\ZK]-h+h_0\endsmallmatrix}z(\ell').$$
Note that since the sum considers only $\ell'\in L'$ with $[\ell']=[\ZK]-h+h_0$, the condition
$\ell'_{\I}\leq (\ZK-E+\ell'_0)|_{\I}$ is equivalent to $\ell'_{\I}\prec (\ZK-r_h+\ell'_0)|_{\I}$.
Hence, the sum coincides with the modified counting function $q_{[\ZK]-h+h_0,\I}(\ZK-r_h+\ell'_0)$,
which proves~\eqref{tdual:1}.
\end{proof}

The above twisted duality has two important specializations. The first one gives back the already known duality
result proved in~\cite{LNNdual}, which was the motivation for the twisted version as well.
\begin{cor}[{\cite[Theorem 4.4.1]{LNNdual}}]\label{olddual}
If $(X,0)$ is a \RHS surface singularity, then
\begin{enumerate}[leftmargin=3cm,itemindent=.5cm,labelwidth=\itemindent,labelsep=.5cm,align=left,label={$(\alph*)$}]
 \item 
$\mathrm{mpc}^{\pi_{\I}(\cS'_{\mathbb{R}})}(Z_h(\tt_{\I}))=q_{[\ZK]-h,\I}(\ZK-r_h);$
 \item 
$\mathrm{pc}^{\pi_{\I}(\cS'_{\mathbb{R}})}(Z_h(\tt_{\I}))=Q_{[\ZK]-h,\I}(\ZK-r_h).$
\end{enumerate}
\end{cor}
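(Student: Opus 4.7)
The plan is to deduce the corollary as the specialization of the twisted duality Theorem~\ref{tdual} at the trivial twist $\ell'_0=0$. Indeed, setting $\ell'_0=0$ gives $h_0=[\ell'_0]=0$ and $f_{\ell'_0}(\tt)=f(\tt)$, so that the Taylor expansion at the origin reduces to $Tf=Z$ and the $h$-component satisfies $(Tf_{\ell'_0})_h(\tt_I)=Z_h(\tt_I)$. Moreover, the dual shift collapses to $\check{\ell}_0(h)=0+r_{h-0}=r_h$.

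With these identifications, part~\ref{tdual:1} of Theorem~\ref{tdual} reads
\[
\mathrm{mpc}^{\pi_{\I}(\cS'_{\mathbb{R}})}(Z_h(\tt_{\I}))
=\qp{q}_{h,\I}(r_h-r_h)
=\qp{q}_{h,\I}(0)
=q_{[\ZK]-h,\I}(\ZK-r_h),
\]
which is exactly statement~(a). Similarly, part~\ref{tdual:2} of Theorem~\ref{tdual} specializes to
\[
\mathrm{pc}^{\pi_{\I}(\cS'_{\mathbb{R}})}(Z_h(\tt_{\I}))
=\qp{Q}_{h,\I}(0)
=Q_{[\ZK]-h,\I}(\ZK-r_h),
\]
giving statement~(b).

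The only point requiring a small verification is that $\ell'_0=0$ is a legitimate choice in Theorem~\ref{tdual}, which assumes $\ell'_0\in\cS'$: since $0\in\cS'$ trivially, and the auxiliary factorization $f_{\ell'_0}(\tt_I)=\mathrm{Pol}_{\ell'_0}(\tt_I)\cdot\mathrm{Prod}(\tt_I)$ used in the proof still has its numerator supported on $\pi_I(\cS')$, nothing in the argument breaks. Hence there is no real obstacle: the corollary is obtained by pure substitution, and one only needs to check that the three replacements ($h_0\mapsto 0$, $f_{\ell'_0}\mapsto f$, $\check{\ell}_0(h)\mapsto r_h$) carry through the formulas of Theorem~\ref{tdual} unchanged, which is immediate from the definitions.
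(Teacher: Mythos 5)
Your proof is correct and matches the paper's own argument, which simply invokes Theorem~\ref{tdual} with $\ell'_0=0$; your careful verification that $h_0=0$, $f_{\ell'_0}=f$, and $\check{\ell}_0(h)=r_h$ is exactly the substitution the paper leaves implicit.
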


\begin{proof}
Use Theorem~\ref{tdual} for $\ell'_0=0$.
\end{proof}
The second specialization can be considered as a relative version of the duality and it will be applied in the
sequel to the case of an embedded reduced curve germ.
\begin{cor}\label{reldual}
If $(X,0)$ is a \RHS surface singularity, then
\begin{enumerate}[leftmargin=3cm,itemindent=.5cm,labelwidth=\itemindent,labelsep=.5cm,align=left,label={$(\alph*)$}]
\item 
$\mathrm{mpc}^{\pi_{\I}(\cS'_{\mathbb{R}})}((f_{\ell'_0})_0(\tt_{\I}))=\qp{q}_{-h_0,\I}(-\check{\ell}_{0})=q_{[\ZK]+h_0,\I}(\ZK+\ell'_0);$
\item 
$\mathrm{pc}^{\pi_{\I}(\cS'_{\mathbb{R}})}((f_{\ell'_0})_0(\tt_{\I}))=\qp{Q}_{-h_0,\I}(-\check{\ell}_{0})=Q_{[\ZK]+h_0,\I}(\ZK+\ell'_0).$
\end{enumerate}
\end{cor}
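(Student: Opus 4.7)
The plan is to obtain Corollary \ref{reldual} as a direct specialization of Theorem \ref{tdual} to the case $h=0$. In contrast with Corollary \ref{olddual} (where one specializes $\ell_0'=0$ and recovers the untwisted duality of \cite{LNNdual}), here the twist $\ell_0'$ is kept arbitrary and the class in the left-hand side is fixed to be the trivial one, which is the situation relevant to the embedded curve application ($\ell_0' = \ell'_C$ with $[\ell'_C]$ possibly non-trivial, but the $0$-part of the twisted series being the object extracted by the relative CDGZ-identity, cf.~Theorem~\ref{relCDGZ}).

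First, I would substitute $h=0$ into both identities of Theorem \ref{tdual}. Since $r_0=0$ by definition, the argument $r_h-\check{\ell}_{0}(h)$ in the middle terms collapses to $-\check{\ell}_{0}(0)=-\check{\ell}_0$, recalling the convention $\check{\ell}_0:=\check{\ell}_0(0)=\ell_0'+r_{-h_0}$ from \eqref{eq:dualrep}. Second, on the right-hand sides the index $[\ZK]-h+h_0$ becomes $[\ZK]+h_0$ and the evaluation point $\ZK-r_h+\ell_0'$ becomes $\ZK+\ell_0'$. Assembling, Theorem \ref{tdual}(a) yields
$$\mathrm{mpc}^{\pi_{\I}(\cS'_{\mathbb{R}})}((f_{\ell'_0})_0(\tt_{\I}))=\qp{q}_{-h_0,\I}(-\check{\ell}_{0})=q_{[\ZK]+h_0,\I}(\ZK+\ell'_0),$$
which is exactly part (a) of the corollary, and analogously from Theorem \ref{tdual}(b) one reads off part (b).

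The only point requiring care — and one might call it the "obstacle", though it is really a notational check — is that on the middle term the quasi-polynomial $\qp{q}_{-h_0,\I}$ is being evaluated at $-\check{\ell}_0\in L$ (rather than on $L_\I$ or on a translate of the small Lipman cone by the canonical representative). This is legitimate by the convention recorded right after \eqref{eq:InEx}: the counting functions and their quasi-polynomial extensions are extended to $L'$ via the projection $L'\to L'_\I$, so the formula $\qp{q}_{-h_0,\I}(-\check{\ell}_0)$ is well-defined and agrees with its projected value. With this remark the corollary follows immediately from the already-established Theorem \ref{tdual}; no further argument is needed.
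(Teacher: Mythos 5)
Your proof is correct and takes exactly the same approach as the paper, which simply says ``Specialize Theorem~\ref{tdual} to $h=0$''; your extra bookkeeping (using $r_0=0$ to simplify the middle and right-hand arguments, and the convention after~\eqref{eq:InEx} for evaluating $\qp{q}_{-h_0,\I}$ at $-\check{\ell}_0\in L$) is accurate and fills in exactly the routine details the paper leaves implicit.
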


\begin{proof}
Specialize Theorem~\ref{tdual} to $h=0$.
\end{proof}

\section{Application of the twisted duality to invariants of curves on rational surface singularities}
\label{sec:deltakappa}
\subsection{}
According to Blache~\cite{Blache-RiemannRoch}, for every algebraic normal surface germ $(X,0)$ there
exists a unique map $A_{X,0}:{\rm Weil}(X,0)/{\rm Cartier}(X,0) \to \QQ$ with the following properties
\begin{align}
\nonumber
&A_{X,0}(-D)=A_{X,0}(-K_X+D) && \mbox{ for any Weil divisor } \ D,\\
&A_{X,0}(C)= \chi(-\ell_C')-\delta(C) && \mbox{ for any reduced curve } \ (C,0)\subset (X,0). \label{eq:A2}
\end{align}
This map appears as the correction term of the generalized Riemann-Roch formula and adjunction formulae for Weil divisors on projective algebraic normal surfaces, and measures the local contribution of a singular point of the surface. For more details we refer to \cite{Blache-RiemannRoch} (or see the discussion from \cite[1.5]{kappa}).

\subsection{} We consider a reduced curve germ $(C,0)$ on a rational normal surface singularity $(X,0)$. Let $\pi:\tilde{X}\rightarrow X$
be a good embedded resolution of $C\subset X$ and denote by $\Gamma$ its dual resolution graph.
Although it is not necessary, for the transparency of the proof we will also assume
condition~\eqref{rescond} for the resolution, that is, the strict transform $\widetilde{C}$ meets each $E_v$ ($v\in \V$) in at
most one point. As before, we denote by $\ell'_C\subset L'$ the exceptional part of $\pi^*C$, that is,
$\pi^*C=\widetilde C+\ell'_C$ and $I_C=\Supp^*(\ell'_C)$, the subset of
irreducible exceptional divisors which intersect $\widetilde{C}$.

The aim of this section is to apply the general delta invariant formula (Theorem~\ref{thm:delta-sec}) 
and the twisted duality results of the previous section~\ref{ss:td} to give a topological/combinatorial 
proof for the following results, already presented in~\cite{kappa}.
\begin{thm}[\cite{kappa}]\label{thm:deltakappa}
Let $(X,0)$ be a rational surface singularity and $C\subset X$ a reduced curve germ, then
\begin{enumerate}[leftmargin=3cm,itemindent=.5cm,labelwidth=\itemindent,labelsep=.5cm,align=left,label={$(\arabic*)$}]
 \item
\label{eqfinal:delta1} 
$\delta(C)=
\kappa_{\Gamma,I_C}(\ell'_C)
=\kappa_X(C)=\chi(Z_K+\ell'_C)-\chi(s_{[Z_K+\ell'_C]});$
\item
\label{eqfinal:delta2} 
$A_{X,0}(C)=\chi(s_{[Z_K+\ell'_C]})=\chi(s_{[-\ell'_C]}).$
\end{enumerate}
\end{thm}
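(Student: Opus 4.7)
The plan follows the three-step schematic picture from the Introduction: $\delta(C)\rightsquigarrow\mathrm{pc}(P_{C_{\bullet}})\rightsquigarrow\mathrm{pc}(Z^{C_{\bullet}}_{0})\rightsquigarrow Q(\ZK+\ell'_C)\rightsquigarrow\chi$-formulae. Essentially, one expresses $\delta(C)$ as an alternating sum of periodic constants of curve Poincar\'e series (Part~I), translates these via the rational-case CDGZ-identity into periodic constants of the relative topological series, and then applies twisted duality (Part~II) to collapse everything into a single value of the topological counting function $Q$, which finally is rewritten via Riemann--Roch.

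\emph{Steps 1--3: from abstract $\delta$ to a single counting value.} By Theorem~\ref{thm:delta-sec}(a),(b) together with Example~\ref{ex:pc}\eqref{ex:pc1} (polynomial case), one writes $\delta(C)=\sum_{\emptyset\neq J\subset I}(-1)^{|J|}\mathrm{pc}(P_{C_J}(\tt_J))$. Since $(X,0)$ is rational, the relative CDGZ-identity (Theorem~\ref{relCDGZ}) turns each $P_{C_J}(\tt_J)=Z^{C_J}_0(\tt_{I_{C_J}})$, where $Z^{C_J}(\tt)=Z(\tt)\prod_{v\in I_{C_J}}(1-\tt^{E^*_v})$ and assumption~\eqref{rescond} identifies the branches of $C_J$ with $I_{C_J}\subset I_C$. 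The binomial identity $\sum_{J'\neq\emptyset}(-1)^{|J'|}\prod_{v\in J'}(1-\tt^{E^*_v})=\tt^{\ell'_C}-1$ (using $\ell'_C=\sum_{v\in I_C}E^*_v$) fuses the alternating sum into $\sum_{J}(-1)^{|J|}Z^{C_J}(\tt)=Z(\tt)(\tt^{\ell'_C}-1)$. Taking $0$-parts, reducing to $\tt_{I_C}$, and applying the twisted duality of Corollary~\ref{reldual} to the two terms $\tt^{\ell'_C}Z_{-[\ell'_C]}$ and $Z_0$ yields
\[
\mathrm{pc}\!\left(\sum_{J}(-1)^{|J|}Z^{C_J}_0(\tt_{I_C})\right)=Q_{[\ZK+\ell'_C],I_C}(\ZK+\ell'_C)-Q_{[\ZK],I_C}(\ZK).
\]

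\emph{Step 4: matching the two sides and finishing part~\ref{eqfinal:delta1}.} Three ingredients close the chain. (i) $\mathrm{pc}$ is $\mathbb{Z}$-linear on summands sharing the common big chamber containing $\pi_{I_C}(\cS'_{\RR})$, provided by Proposition~\ref{prop:LSz} and its twisted counterpart in Remark~\ref{cor:frel}. (ii) A compatibility of $\mathrm{pc}$ under further variable restriction, $\mathrm{pc}(Z^{C_J}_0(\tt_{I_C}))=\mathrm{pc}(Z^{C_J}_0(\tt_{I_{C_J}}))$, which rests on the support structure~\eqref{eq:fI} of the zeta function and on the fact that the Lipman cone sits in a single big chamber in both $\pi_{I_C}(\cS'_{\RR})$ and $\pi_{I_{C_J}}(\cS'_{\RR})$. (iii) The vanishing $Q_{[\ZK],I_C}(\ZK)=0$, derived from Theorem~\ref{thm:eqratsing} at $\ell'=0$ (where $\Supp^*(0)=\emptyset$ forces $\kappa_{\Gamma}(0)=0$) combined with the surgery formula (Corollary~\ref{cor:surgform}), which propagates the vanishing from $I=\emptyset$ to $I=I_C$ in the rational case. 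Together they give $\delta(C)=Q_{[\ZK+\ell'_C],I_C}(\ZK+\ell'_C)=\kappa_{\Gamma,I_C}(\ell'_C)=\kappa_X(C)$, the last equality by Theorem~\ref{thm:eqratsing}. The $\chi$-formula then follows from $\kappa_X(C)=\mathfrak{h}(\ZK+\ell'_C)$ combined with~\eqref{eq:hell} and Corollary~\ref{cor:sharp}\ref{cor:sharp1} (rational identity $h^1(\cO_{\tilde X}(-r_h))=\chi(r_h)-\chi(s_h)$), giving $\kappa_X(C)=\chi(\ZK+\ell'_C)-\chi(s_{[\ZK+\ell'_C]})$.

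\emph{Step 5: part~\ref{eqfinal:delta2}.} Insert part~\ref{eqfinal:delta1} into Blache's defining relation $A_{X,0}(C)=\chi(-\ell'_C)-\delta(C)$. The Riemann--Roch symmetry $\chi(x)=\chi(\ZK-x)$ at $x=-\ell'_C$ gives $\chi(-\ell'_C)=\chi(\ZK+\ell'_C)$, so the two $\chi$-terms cancel and $A_{X,0}(C)=\chi(s_{[\ZK+\ell'_C]})$. The remaining equality $\chi(s_{[\ZK+\ell'_C]})=\chi(s_{[-\ell'_C]})$ is the Serre/Seiberg--Witten symmetry of minimal cycles for rational singularities: via Corollary~\ref{cor:sharp}\ref{cor:sharp1} it is equivalent to the charge-conjugation invariance $\mathfrak{sw}^{\text{norm}}_h(\Sigma)=\mathfrak{sw}^{\text{norm}}_{[\ZK]-h}(\Sigma)$ of the Seiberg--Witten invariants on a rational homology sphere.

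\emph{Main obstacle.} The technical heart is the periodic-constant compatibility (ii) in Step~4: the pc of $Z^{C_J}_0$ computed in $\pi_{I_C}(\cS'_{\RR})$ must agree with that in the smaller cone $\pi_{I_{C_J}}(\cS'_{\RR})$ after the further restriction. This is not automatic for an arbitrary multivariable series and requires the specific product structure~\eqref{eq:fI} together with the twisted-chamber analysis of Remark~\ref{cor:frel}. The SW symmetry invoked in Step~5 is a second delicate input, with a well-known topological origin.
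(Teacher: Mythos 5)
Your architecture (from $\delta$ through $\mathrm{pc}(P_{C_J})$, CDGZ, twisted duality, to $Q(\ZK+\ell'_C)$ and finally $\chi$) is essentially the paper's, but you handle the alternating sum over $J\subset I$ differently: you first \emph{fuse} it via the binomial identity $\sum_{J}(-1)^{|J|}\prod_{v\in J}(1-\tt^{E_v^*})=\tt^{\ell'_C}$, collapsing everything to the two series $\tt^{\ell'_C}Z_{-h_C}(\tt_{I_C})$ and $Z_0(\tt_{I_C})$ before applying twisted duality. The paper instead expands each $Z_0^{(C_J)}=\sum_{K\subset J}(-1)^{|K|}\tt^{\ell'_{C_K}}Z_{-h_K}$, forms a double sum, singles out the $K=J=I$ diagonal term, and kills the rest via the surgery formula and the binomial cancellation $\sum_{K\subset J\subset I}(-1)^{|J|}=0$. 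Your fusion is cleaner, but the work it saves reappears as your step~(ii).

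Two genuine gaps in justification. First, your step~(ii) — that $\mathrm{pc}(Z_0^{(C_J)}(\tt_{I_C}))=\mathrm{pc}(Z_0^{(C_J)}(\tt_{I_{C_J}}))$ — is \emph{not} a consequence of the product structure~\eqref{eq:fI} and the big-chamber analysis alone; those only give existence of the two periodic constants, not their equality. Expanding $Z_0^{(C_J)}$ into twisted pieces $\tt^{\ell'_{C_K}}Z_{-h_K}$ and applying Corollary~\ref{reldual}, equality becomes $Q_{[\ZK]+h_K,I_C}(\ZK+\ell'_{C_K})=Q_{[\ZK]+h_K,I_{C_J}}(\ZK+\ell'_{C_K})$ for all $K\subset J$, and this is exactly the independence of $\mathfrak{h}_{h,I'}(\ZK+\ell')$ from $I'\supset\Supp^*(\ell')$ proved in Theorem~\ref{thm:eqratsing} — which uses rationality crucially (Grauert--Riemenschneider for $\ZK+\cS'$, $h^1(\cO_{\ell_0})=0$, etc.). Without invoking rationality here your step~(ii) is unsupported, and indeed would fail in general (the subgraph surgery contributions $\pc^{\cS'_\RR(\Gamma_k)}(Z_0^{\Gamma_k})=p_g(X_k)$ are nonzero off the rational world).

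Second, for the last identity $\chi(s_{[\ZK+\ell'_C]})=\chi(s_{[-\ell'_C]})$ you appeal to a charge-conjugation invariance $\mathfrak{sw}_h^{\mathrm{norm}}=\mathfrak{sw}_{[\ZK]-h}^{\mathrm{norm}}$. This is not the clean source: the raw SW conjugation $\mathfrak{sw}(\sigma)=\mathfrak{sw}(\bar\sigma)$ does not translate directly into that normalized statement without tracking the $r_h$-normalization shift and the spin$^c$ labeling. The identity $\chi(s_h)=\chi(s_{[\ZK]-h})$ for rational $X$ drops out internally: combining Corollary~\ref{olddual} ($\mathrm{pc}(Z_h(\tt))=Q_{[\ZK]-h}(\ZK-r_h)$), the $\chi$-formula~\eqref{eq:Qform}, and $\chi(\ZK-r_h)=\chi(r_h)$, with Corollary~\ref{cor:sharp}\ref{cor:sharp1} ($\mathrm{pc}(Z_h(\tt))=\chi(r_h)-\chi(s_h)$), yields $\chi(s_h)=\chi(s_{[\ZK]-h})$ directly. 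That is what the paper's twisted-duality-at-$h=0$ argument amounts to. Replacing your SW appeal with this computation closes the gap and keeps the proof self-contained.
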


\begin{rem}
We emphasize that the first identity from Theorem~\ref{thm:deltakappa} has the same spirit as the results
presented in section~\ref{sec:pg}: it shows how to decode important invariants (in this case $\delta(C)$)
from the Poincar\'e series, since the relative $\kappa$-invariant $\kappa_{\Gamma,I_C}(\ell'_C)$ is defined
as its counting function. In forthcoming papers we plan to understand this concept
in full generality, i.e.~in the case of reduced curves embedded in any normal surface singularity.
\end{rem}

\begin{proof}
The expression~\ref{eqfinal:delta1} follows from 
the assumption on the resolution, we can write the cycle associated with the curve $C=\cup_{i\in I}C_i$ as
$\ell'_C=\sum_{i\in I}E^*_i$ where $I=I_C\subset \V$. We also define the cycles $\ell'_{C_J}=\sum_{j\in J}E^*_j$
associated with the curve $C_J:=\cup_{j\in J}C_j$ for any $J\subset I$, and denote $h_J:=[\ell'_{C_J}]\in H$. 
For convenience, we set $\ell'_{C_{\emptyset}}=0$, so $h_{C_{\emptyset}}=0\in H$.

Consider the relative topological Poincar\'e series of $C_J$ which was defined in~\eqref{eq:rtPs} by the equation
\begin{equation}\label{eq:rZJ}
Z^{(C_J)}(\mathbf{t})=Z(\mathbf{t})\cdot \prod_{j\in J}(1-\mathbf{t}^{E^*_j}) \ \ \ \mbox{for any} \ J\subset I.
\end{equation}
Then, as it is explained in section~\ref{ss:rCDGZ}, one has the decomposition 
$Z^{(C_J)}(\mathbf{t})=\sum_{h\in H} Z_h^{(C_J)}(\mathbf{t})$, and the relative CDGZ-identity (Theorem~\ref{relCDGZ}) provides for $\emptyset\neq J\subset I$:
\begin{equation}\label{eq:CDGZ}
P_{C_{J}}(\mathbf{t}_{J})=Z_0^{(C_J)}(\mathbf{t}_{J}).
\end{equation}
For $J=\{j\}$, the formula $\pc(P_{C_{j}}(t_{j}))=\pc(Z_0^{(C_j)}(t_{j}))$ follows from~\eqref{eq:CDGZ}.
Otherwise, $|J|\geq 2$, which implies $P_{C_{J}}$ is a polynomial by Lemma~\ref{lem:Ppoly}, and thus
$P_{C_{J}}(1_{J})=\pc^{\pi_J(S'_\RR)}(Z_0^{(C_J)}(\tt_{J}))$.

Moreover, thanks to~\ref{eq:deltapc} one can express $\delta(C)$ as
\begin{equation}\label{eq:delta2}
\delta(C)=\sum_{\smallmatrix \emptyset\neq J\subset I \endsmallmatrix}
(-1)^{|J|} \pc^{\pi_J(S'_\RR)}(Z_0^{(C_J)}(\tt_{J})).
\end{equation}
From the definition~\eqref{eq:rZJ} of $Z^{(C_J)}$, its $0$-part can be expressed as
$$Z_0^{(C_J)}(\tt_{J})=\sum_{\K\subset J}(-1)^{|\K|}\tt_{J}^{\ell'_{C_\K}}\cdot Z_{-h_{\K}}(\tt_{J}).$$
Hence, $Z_0^{(C_J)}(\tt_{J})$ is the alternating sum of twisted zeta functions
$\tt_{J}^{\ell'_{C_\K}} Z_{-h_{\K}}(\tt_{J})$ whose counting functions are $Q_{-h_{\K},J}(\ell-\ell'_{C_\K})$
by section~\ref{ss:td}. Moreover, this implies that the following function on $L$:
\begin{equation}\label{eq:D}
D_C(\ell):=\sum_{\smallmatrix \emptyset\neq J\subset \I \endsmallmatrix}(-1)^{|J|}
\sum_{\K\subset J}(-1)^{|\K|} Q_{-h_{\K},J}(\ell-\ell'_{C_\K})
\end{equation}
admits a quasi-polynomial associated with the cone $\pi_J(S'_\RR)$, denoted by
$\widetilde{D}_C(\ell)$, and hence~\eqref{eq:delta2} implies $\widetilde{D}_C(0)=\delta(C)$.

On the other hand, we can rearrange the terms in the above definition of $D_C(\ell)$ so that
$$D_C(\ell)=Q_{-h_C,\I}(\ell-\ell'_{C})+\sum_{\K\subsetneq \I} (-1)^{|\K|}R_{-h_{\K}}(\ell-\ell'_{C_\K}),$$
where $R_{-h_{\K}}(\ell-\ell'_{C_\K}):=
\sum_{\smallmatrix J\neq\emptyset\\ \K\subset J\subset \I \endsmallmatrix}
(-1)^{|J|}Q_{-h_{\K},J}(\ell-\ell'_{C_\K})$, since the sign of the first term associated with $\K=J=\I$ is $(-1)^{2|\I|}$.
Therefore, using the dual shift $\check{\ell}_C$ given by~\eqref{eq:dualrep}, on the quasi-polynomial level one gets
\begin{equation*}
\widetilde{D}_C(\ell)=\qp{Q}_{-h_C,\I}(\ell-\check{\ell}_C)+
\sum_{\K\subsetneq \I} (-1)^{|\K|}\qp{R}_{-h_{\K}}(\ell-\check{\ell}_{C_\K}),
\end{equation*}
with the notation
\begin{equation}\label{eq:frakR}
\qp{R}_{-h_{\K}}(\ell-\check{\ell}_{C_\K}):=
\sum_{\smallmatrix J\neq\emptyset \\ \K\subset J\subset \I \endsmallmatrix}
(-1)^{|J|}\qp{Q}_{-h_{\K},J}(\ell-\check{\ell}_{C_\K}).
\end{equation}
Then, after substituting $\ell=0$ above, one can apply the relative duality Corollary~\ref{reldual}
to $\qp{Q}_{-h_C,\I}(-\check{\ell}_C)$ which provides the final equation
\begin{equation}
\delta(C)=
Q_{[\ZK+\ell'_C],I}(\ZK+\ell'_C)+
\sum_{\K\subsetneq \I} (-1)^{|\K|}\qp{R}_{-h_{\K}}(-\check{\ell}_{C_\K}).
\end{equation}
By Theorem~\ref{thm:eqratsing} we have 
$Q_{[\ZK+\ell'_C],I}(\ZK+\ell'_C)=:\kappa_{\Gamma,I}(\ell'_C)=\kappa_{X,I}(\ell'_C)=\kappa_X(C)$.
Hence, to finish the proof it remains to show the vanishing of the sum in the final equation.
\vspace{0.3cm}

More precisely, we will prove that $\qp{R}_{-h_{\K}}(-\check{\ell}_{C_\K})=0$ for any $\K\subsetneq I$.
We apply the surgery formula to express $Q_{-h_{\K},J}(\ell-\ell'_{C_\K})$.

Thus, for the fixed subsets $\K\subset J\subset \I\subset \V$ ($J\neq\emptyset$) if we denote by
$\{\Gamma_k\}_k$ the connected full subgraphs determined by the subset of vertices $V\setminus J$,
the surgery formula~\eqref{eq:surgform} gives for a \emph{sufficiently large} $\ell$ the expression
$$Q_{-h_{\K},J}(\ell-\ell'_{C_\K})=
Q_{-h_{\K}}(\ell-\ell'_{C_\K})-\sum_k Q^{\Gamma_k}_{0}(\ell|_{\Gamma_k}),$$
since $j^*_k(\ell-\ell'_{C_\K})=j^*_k(\ell)=:\ell|_{\Gamma_k}$ whenever $\K\subset J$.
This, on the quasi-polynomial level with the substitution $\ell=0$ becomes
$$\qp{Q}_{-h_{\K},J}(-\check{\ell}_{C_\K})=
\qp{Q}_{-h_{\K}}(-\check{\ell}_{C_\K})-\sum_k \qp{Q}^{\Gamma_k}_{0}(0),$$
which implies that
$$\qp{Q}_{-h_{\K},J}(-\check{\ell}_{C_\K})=
\qp{Q}_{-h_{\K}}(-\check{\ell}_{C_\K})$$
since the periodic constant
$\qp{Q}^{\Gamma_k}_{0}(0)$ associated with the subgraph $\Gamma_k$ of the dual graph of a
rational singularity is $0$ (see the last line of proof of Corollary~\ref{cor:surgform}).

If $K=\emptyset$, then $\check\ell_{C_K}=0$ and hence $\qp{Q}_0(0)=0$ as above. This shows in this case 
$\qp{R}_{-h_{\K}}(-\check{\ell}_{C_\K})=0$.

Otherwise, $K\neq \emptyset$, then $\check\ell_{C_K}=\ell_{C_K}+E$ and thus one obtains
$$\qp{R}_{-h_{\K}}(-\check{\ell}_{C_\K})=\qp{R}_{-h_{\K}}(-\check{\ell}_{C_\K})=
\sum_{\smallmatrix \K\subset J\subset \I\endsmallmatrix}
(-1)^{|J|}\qp{Q}_{-h_{\K}}(-\check{\ell}_{C_\K})=0,$$
since $\qp{Q}_{-h_{\K}}(-\check{\ell}_{C_\K})$ does not depend of $J$ and
$$\sum_{\smallmatrix \K\subset J\subset \I\endsmallmatrix} (-1)^{|J|}=
(-1)^{|K|}\sum_{k=0}^{|I|-|K|}{{|I|-|K|}\choose{k}}(-1)^k = (-1)^{|K|}(1-1)^{|I|-|K|}=0.$$
On the other hand, \eqref{eq:SUMQP} combined with Corollary~\ref{cor:sharp}\ref{cor:sharp1} give
\begin{equation}\label{eq:Qform}
\widetilde{Q}_{[\ell']}(\ell)=\chi(\ell')-\chi(s_{[\ell']}) \ \ \mbox{ where } \ \ell'=\ell+r_{[\ell']}.
\end{equation}
This, together with Corollary~\ref{cor:sharp}\ref{cor:sharp2} implies that
$$Q_{[Z_K+\ell'_C]}(Z_K+\ell'_C)=\widetilde{Q}_{[Z_K+\ell'_C]}(Z_K+\ell'_C)=
\chi(Z_K+\ell'_C)-\chi(s_{[Z_K+\ell'_C]}),$$
which completes the proof of part~\ref{eqfinal:delta1}.

\vspace{0.3cm}
As for part~\ref{eqfinal:delta2}, the first identity is immediate by~\eqref{eq:A2} and part~\ref{eqfinal:delta1}. 
In order to show the identity $\chi(s_{[Z_K+\ell'_C]})=\chi(s_{[-\ell'_C]})$ (not true for general normal surface 
singularities, cf. \cite[Example~4.5]{kappa}) can be shown as follows.

The twisted duality Theorem~\ref{tdual}\ref{tdual:2} with $h=0$ and $\ell'_0=\ell_C'$ 
(and $\check{\ell}_C'=\ell_C'-r_{[-\ell'_C]}$, cf. \ref{sec:tw-series}) implies
$Q_{[Z_K+\ell'_C]}(Z_K+\ell'_C)=\widetilde{Q}_{[-\ell_C']}(-\check{\ell}_{C}')$. Moreover, by applying~\eqref{eq:Qform} for
$\ell':= -\ell_C'=-\check{\ell}'_C+r_{[-\ell_C']}$ we obtain
$\widetilde{Q}_{[-\ell_C']}(-\check{\ell}_{C}')=\chi(-\ell_C')-\chi(s_{[-\ell_C']})$, which deduces the desired identity
since $\chi(Z_K+\ell'_C)=\chi(-\ell_C')$.
\end{proof}

\section{Examples}
\label{sec:examples}

\subsection{A cyclic quotient}
 Consider $X$ the cyclic quotient singularity $\frac{1}{4}(1,3)$, whose resolution graph is the
 $\mathbb A_3$ graph. The action is $\xi*(x,y)=(\xi x, \xi^3 y)$ ($\xi^4=1$), hence the invariant ring is
 generated by
 $u=x^4$, $v=y^4$ and $w=xy$. In particular, $X=\{uv=w^4\}$.
 \begin{center}
 \begin{figure}[ht]
 \begin{tikzpicture}
\node[shape=circle,fill, draw=black,scale=.7] (A) at (-3,0) {} node [below=4] at (A) {$E_1 (-2)$}
node [above=10] at (A) {$E_1^*=(\frac{3}{4},\frac{1}{2},\frac{1}{4})$};
\node[shape=circle,fill, draw=black,scale=.7] (B) at (0,0) {} node [below=4] at (B) {$E_2 (-2)$}
node [above=10] at (B) {$E_2^*=(\frac{1}{2},1,\frac{1}{2})$};
\node[shape=circle,fill, draw=black,scale=.7] (C) at (3,0) {} node [below=4] at (C) {$E_3 (-2)$}
node [above=10] at (C) {$E_3^*=(\frac{1}{4},\frac{1}{2},\frac{3}{4})$};
\path [-,very thick] (A) edge (B);
\path [-,very thick] (B) edge (C);
 \end{tikzpicture}
 \caption{Dual graph of cyclic quotient singularity}
 \end{figure}
 \end{center}
The coordinates of $E^*_i$ are in terms of the basis $\{E_1,E_2,E_3\}$.
If $f(x,y)=x^{12}-y^{4}$, then $f$ is invariant, hence the corresponding divisor
$C=c(\{f=0\})$ is Cartier (where $c:\CC^2\to X$ is the universal abelian covering, cf. section \ref{ss:uac}).
It is given by $u^3=v$ on $X$. Therefore, $C$ is $\{uv-w^4=v-u^3=0\}$,
isomorphic to the plane curve singularity $\{w^4=u^4\}$ with $\delta(C)=6$.
Also note that $\ell'_C=4E^*_1=(3,2,1)$, hence $r_{h_C}=s_{h_C}=0$. The topological Poincar\'e series is given by
 $$
 \array{rcl}
 Z(\tt)&=&
 T\Big(\frac{1}{(1-t_1^{\frac{3}{4}}
 t_2^{\frac{1}{2}}t_3^{\frac{1}{4}})(1-t_1^{\frac{1}{4}}t_2^{\frac{1}{2}}t_3^{\frac{3}{4}})}\Big)
 =\displaystyle{\sum_{h\in \ZZ_4}} Z_h(\tt)\\ \\
 &=&\Big(1+t_1t_2t_3+t_1t_2^2t_3^3+t_1^2t_2^2t_3^2+t_1^2t_2^3t_3^4+t_1^2t_2^4t_3^6+
 \displaystyle{\sum_{\ell'\in L,\ \ell'\geq \ell'_C}\ z(\ell')\tt^{\ell'}}\Big)
 +\displaystyle{\sum_{0\neq h\in \ZZ_4}} Z_h(\tt),
 \endarray
 $$ where $T(\cdot)$ is the Taylor expansion at the origin of the given rational function.
 This together with $\ZK=0$ implies that
 $\kappa_{X}(C)=Q_h(\ZK+\ell'_C)=Q_h(\ell'_C)=6$.

 Analogously, $f(x,y)=x^2-y^2\in (\cO_Y)_{[2]}$ defines a Weil divisor with $h=[2]\in \ZZ_4$.
 The two components of $f=0$ are sent by $c$ into an irreducible $C$,
 with equation $\{u=w^2=v\}$. Hence $C$ is smooth with $\delta(C)=0$.
 In this case $\ell'_C=E^*_2=(\frac{1}{2},1,\frac{1}{2})$, and, in fact,
 $s_{[2]}=\ell'_C$
 (though $r_2=(\frac{1}{2},0,\frac{1}{2})$). Therefore,
 $\kappa_X(C)=Q_h(\ell'_C)=Q_h(s_{[2]})=0$ by Corollary \ref{cor:sharp}.

\subsection{A non-cyclic quotient singularity}
\label{ex:dihedral}
 Consider the binary dihedral quotient singularity $X=\CC^2/G$ where
 $$G=\mathbb{B}\mathbb{D}_{12}(7)=\left\langle
 \alpha=\left(\array{cc} 0&-1\\ 1&0\endarray\right),
 \beta=\left(\array{cc} \xi &0\\ 0&\xi^7\endarray\right)\right\rangle,$$
 and $\xi^{12}=1$ is a primitive root of unity.
 Its resolution graph is

 \begin{center}
 \begin{figure}[ht]
\begin{tikzpicture}[scale=.5]
\node [shape=circle,fill,draw=black,scale=.7] (O) at (0,0) {}
node [left=4] at (O) {$(-3) E_2$} node [right=2] at (O) {$E^*_2=(\frac{1}{3}, \frac{2}{3}, \frac{1}{3}, \frac{1}{3})$};
\node[shape=circle,fill, draw=black,scale=.7] (A) at (-3,3) {} node [left=4] at (A) {$(-2) E_3$}
node [above=7] at (A) {$E^*_3=(\frac{1}{6}, \frac{1}{3}, \frac{2}{3}, \frac{1}{6})$};
\node[shape=circle,fill, draw=black,scale=.7] (B) at (3,3) {} node [right=4] at (B) {$E_4 (-2)$} node [above right=7] at (B) {$E^*_4=(\frac{1}{6}, \frac{1}{3}, \frac{1}{6}, \frac{2}{3})$};
\node[shape=circle,fill, draw=black,scale=.7] (C) at (0,-3) {} node [left=4] at (C) {$(-2) E_1$} node [below=7] at (C) {$E^*_1=(\frac{2}{3}, \frac{1}{3}, \frac{1}{6}, \frac{1}{6})$};
\path [-,very thick] (A) edge (0,0);
\path [-,very thick] (B) edge (0,0);
\path [-,very thick] (C) edge (0,0);
\end{tikzpicture}
\caption{Dual graph of dihedral quotient singularity}
 \end{figure}
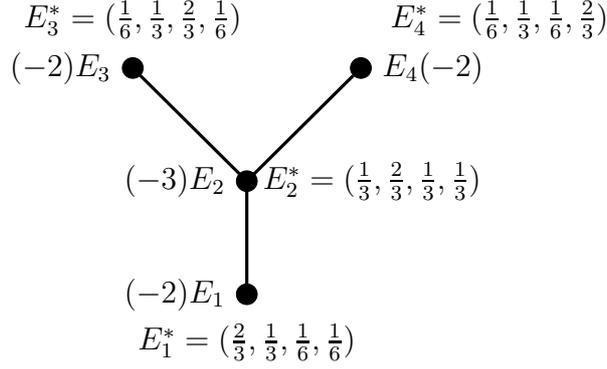
 \end{center}
 In this case we consider embedded curves $C_h$ with $\ell'_{C_h}=s_h$ for $h\in H$, $h\not=0$.
 For instance, if $h=[E^*_2]$ and $s_h=E^*_2$, then $\widetilde{C}_h$
 is a transversal cut (curvette) of $E_2$ (analogously, $2E^*_2$ means
 two disjoint transversal cuts of $E^*_2$, etc).
 $\ZK=E_2^*=\frac{1}{3}E_1+\frac{2}{3}E_2+\frac{1}{3}E_3+\frac{1}{3}E_4$.

 One shows that
$H=\ZZ_2[E^*_1-E^*_4]\times \ZZ_6[E^*_4]$, and $[3E^*_2]=[E_1^*+E^*_3+E^*_4]=0$ in $H$.
Hence by the cyclic symmetry of the graph we have only five different cases.
They correspond to
$h\in \{[E_2^*]=(0,2),[2E^*_2]=(0,4),[E_1^*]=(1,1),[E_1^*+E^*_2]=(1,3),[E_1^*+2E^*_2]=(1,5)\}$.

The following table shows the values $\kappa_X(C_h)=\delta(C_h)$ for different cases.
The column $Z_{[Z_K]+h}(\tt)$ only shows the
monomials $z(\ell')\tt^{\ell'}$ whose degree is $\ell'\ngeq Z_K+s_h$.

\vspace*{14pt}
 \begin{center}
 \begin{tabular}{|c|c|c|c|c|c|c|}
 $h\in H$ & $r_h$ & $s_h$ & $\kappa_X(C_h)$ & $Z_{[Z_K]+h}(\tt)$ & $\chi(-s_h)$ & $A_{X,0}(C_h)$\\ [5pt]
 \hline
 \raisebox{-.7ex}{$(0,2)$} & \raisebox{-.7ex}{$(\frac{1}{3}, \frac{2}{3}, \frac{1}{3}, \frac{1}{3})$} & \raisebox{-.7ex}{$(\frac{1}{3}, \frac{2}{3}, \frac{1}{3}, \frac{1}{3})$} & \raisebox{-.7ex}{$0$} & \raisebox{-.7ex}{$0+...$} & \raisebox{-.7ex}{$\frac{2}{3}$}& \raisebox{-.7ex}{$\frac{2}{3}$}\\[5pt]
 \hline
 \raisebox{-.7ex}{$(0,4)$} & \raisebox{-.7ex}{$(\frac{2}{3}, \frac{1}{3}, \frac{2}{3}, \frac{2}{3})$} & \raisebox{-.7ex}{$(\frac{2}{3}, \frac{4}{3}, \frac{2}{3}, \frac{2}{3})$} & \raisebox{-.7ex}{2} & \raisebox{-.7ex}{$1+\tt^{(1,1,1,1)}+...$} & \raisebox{-.7ex}{$2$}& \raisebox{-.7ex}{$0$}\\[5pt]
 \hline
 \raisebox{-.7ex}{$(1,1)$} & \raisebox{-.7ex}{$(\frac{2}{3}, \frac{1}{3}, \frac{1}{6}, \frac{1}{6})$} & \raisebox{-.7ex}{$(\frac{2}{3}, \frac{1}{3}, \frac{1}{6}, \frac{1}{6})$} & \raisebox{-.7ex}{$0$} & \raisebox{-.7ex}{$0+...$} & \raisebox{-.7ex}{$\frac{1}{2}$} & \raisebox{-.7ex}{$\frac{1}{2}$}\\[5pt]
 \hline
 \raisebox{-.7ex}{$(1,3)$} & \raisebox{-.7ex}{$(0, 0, \frac{1}{2}, \frac{1}{2})$} & \raisebox{-.7ex}{$(1, 1, \frac{1}{2}, \frac{1}{2})$} & \raisebox{-.7ex}{1} & \raisebox{-.7ex}{$\tt^{(\frac{1}{6},\frac{1}{3},\frac{2}{3},\frac{1}{6})}+...$} & \raisebox{-.7ex}{$\frac{3}{2}$} & \raisebox{-.7ex}{$\frac{1}{2}$}\\[5pt]
 \hline
 \raisebox{-.7ex}{$(1,5)$} & \raisebox{-.7ex}{$(\frac{1}{3},\frac{2}{3},\frac{5}{6},\frac{5}{6})$} & \raisebox{-.7ex}{$(\frac{1}{3}, \frac{2}{3}, \frac{5}{6}, \frac{5}{6})$} & \raisebox{-.7ex}{1} & \raisebox{-.7ex}{$\tt^{(\frac{2}{3},\frac{1}{3},\frac{1}{6},\frac{1}{6})}+...$} & \raisebox{-.7ex}{$\frac{7}{6}$} & \raisebox{-.7ex}{$\frac{1}{6}$}\\[5pt]
 \hline
 \end{tabular}
 \end{center}

 \vspace{2mm}

\subsection{A non-rational singularity}
\label{ex:non-rational}
Consider the singularity $X=(\{x^2+y^3+z^7\},0)$ whose dual good resolution graph $\Gamma$ is 

 \begin{center}
 \begin{figure}[ht]
\begin{tikzpicture}[scale=.5]
\node[] [shape=circle,fill, draw=black,scale=.7] (O) at (0,0) {} node [right=4] at (O) {$E_1 (-1)$};
\node[shape=circle,fill, draw=black,scale=.7] (A) at (-3,3) {} node [above=4] at (A) {$E_2 (-2)$};
\node[shape=circle,fill, draw=black,scale=.7] (B) at (3,3) {} node [above=4] at (B) {$E_3 (-3)$};
\node[shape=circle,fill, draw=black,scale=.7] (C) at (0,-3) {} node [below=4] at (C) {$E_4 (-7)$};
\path [-,very thick] (A) edge (0,0);
\path [-,very thick] (B) edge (0,0);
\path [-,very thick] (C) edge (0,0);
\end{tikzpicture}
\caption{Triangle $2,3,7$}
 \end{figure}
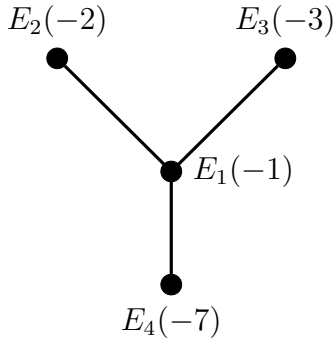
 \end{center}
with $\ZK=2E_1+E_2+E_3+E_4$ and $H=\{0\}$.
\begin{enumerate}
\item\label{ex:non-rational1}
Consider the germ given by $C=(z,x^2+y^3)$ for which one has $\ell'_C=E_4^*=(6,3,2,1)$.
Then one calculates $\kappa_{\Gamma}(\ell'_C)=Q_0(\ZK+\ell'_C)=2$ given by the coefficients:
$$Z(\tt)=\frac{(1-\tt^{E_1^*})}{(1-\tt^{E_2^*})(1-\tt^{E_3^*})(1-\tt^{E_4^*})}=
1+\tt^{(6,3,2,1)}+...\text{ terms }\tt^{\ell'}, \ell'\geq \ZK+\ell'_C.$$
Analogously, $\kappa_{\Gamma,I_C}(\ell'_C)=2$.
However, $\delta(C)=1$ since $C$ is the ordinary plane cusp.

Let $f\in \cO_X$ be a function and $\ell'_f$ denotes its corresponding cycle, ie. 
the exceptional part of $\pi^*(f)$. Then, for the coordinate functions one has 
$\ell'_x=(21,11,7,3)$, $\ell'_y=(14,7,5,2)$, and $\ell'_z=(6,3,2,1)$. 
Hence $\cF(\ZK+\ell'_C)_0=\langle x,y,z^2\rangle$,
which implies that $\kappa_X(C)=\kappa_{X,I_C}(\ell'_C)=2$. Thus, for $C$ we have
$$\kappa_X(C)=\kappa_{X,I_C}(\ell'_C)=\kappa_{\Gamma}(\ell'_C)=\kappa_{\Gamma,I_C}(\ell'_C)\neq \delta(C).$$

\item\label{ex:non-rational2}
On the other hand, if we consider $C_1=(y+z^2,x-z^3\sqrt{1-z})$, then $\ell'_{C_1}=E_4^*$, 
hence it is embedded topologically equivalent with $C$.
However, one shows that $C_1$ is smooth (see eg.~\cite{Nem-PS}), hence $\delta(C_1)=0$.

\item\label{ex:non-rational3}
Consider now a new good resolution $\tilde\pi$ resulting from $\pi$ by blowing-up the point of intersection
of the strict transform of $C$ and $E_4$. This point $P$ is the base point
of the maximal ideal by $\pi$, that is,
$\pi^*\mathfrak{m}_{X}\mathcal{O}_{X}=\mathfrak{m}_P\cO_{\tilde X}(-Z_{\min})$. Then $\tilde I_{C}=E_5$,
$(\tilde \ell'_C)_{E_5}=2$, $(\tilde \ell'_x)_{E_5}=3$, $(\tilde\ell'_y)_{E_5}=2$, $(\tilde\ell'_z)_{E_5}=2$,
and $(\tilde Z_K)_{E_5}=0$, where $E_5$ is the new exceptional divisor and $(\ell'_f)_{E_5}$ represents the
$E_5$-coordinate of $(\ell'_f)$. Hence $\cF((\tilde Z_K+\tilde\ell'_C)_{\tilde I_C})=\langle x,y,z\rangle$,
and thus $\kappa_{X,\tilde I_C}(\ell'_C)=1$. On the other hand, $\kappa_{X,\tilde I_{C_1}}(\tilde\ell'_{C_1})=2$
since $\tilde I_{C_1}=I_{C_1}=E_4$. One can also check that $\kappa_{\tilde\Gamma}(\tilde\ell'_C)=2$, whereas
$\kappa_{\tilde\Gamma,\tilde I_C}(\tilde\ell'_C)=1$.
\end{enumerate}

\bibliographystyle{amsplain}
\providecommand{\bysame}{\leavevmode\hbox to3em{\hrulefill}\thinspace}
\providecommand{\MR}{\relax\ifhmode\unskip\space\fi MR }
\providecommand{\MRhref}[2]{%
 \href{http://www.ams.org/mathscinet-getitem?mr=#1}{#2}
}
\providecommand{\href}[2]{#2}

\end{document}